\sloppy\pagestyle{plain}%
\newtheorem{theorem}[equation]{Theorem}
\newtheorem*{maintheorem*}{Main Theorem}
\newtheorem{lemma}[equation]{Lemma}
\newtheorem{corollary}[equation]{Corollary}
\newtheorem{conjecture}[equation]{Conjecture}
\newtheorem*{question*}{Question}
\newtheorem*{problem*}{Problem}
\newtheorem*{theoremA*}{Theorem A}
\newtheorem*{theoremB*}{Theorem B}
\newtheorem*{theoremC*}{Theorem C}
\theoremstyle{definition}
\newtheorem{example}[equation]{Example}
\newtheorem{definition}[equation]{Definition}
\theoremstyle{remark}
\newtheorem{remark}[equation]{Remark}
\makeatletter\@addtoreset{equation}{section} \makeatother
\title{Super-rigid affine Fano varieties}
\author{Ivan Cheltsov, Adrien Dubouloz, Jihun Park}
\address{ \emph{Ivan Cheltsov}\newline \textnormal{School of Mathematics, The University of Edinburgh
\newline \medskip James Clerk Maxwell Building, The King's Buildings, Mayfield Road, Edinburgh EH9 3JZ, UK.\newline
\texttt{I.Cheltsov@ed.ac.uk}}}
\address{\emph{Adrien Dubouloz}\newline \textnormal{IMB UMR5584, CNRS, Univ. Bourgogne-Franche Comt\'e, F-21000 Dijon, France.
\newline \texttt{adrien.dubouloz@u-bourgogne.fr}}}
\begin{document}

\address{ \emph{Jihun Park}\newline \textnormal{Center for Geometry and Physics, Institute for Basic Science (IBS)
\newline \medskip 77 Cheongam-ro, Nam-gu, Pohang, Gyeongbuk, 37673, Korea.
\newline Department of Mathematics, POSTECH
\newline 77 Cheongam-ro, Nam-gu, Pohang, Gyeongbuk,  37673, Korea. \newline
\texttt{wlog@postech.ac.kr}}}

\begin{abstract}
We study a wide class of affine varieties, which we call affine Fano varieties.
By analogy with birationally super-rigid Fano varieties, we define super-rigidity for affine Fano varieties,
and provide many examples and non-examples of super-rigid affine Fano varieties.
\end{abstract}

\maketitle

\section{Introduction}
\label{section:intro}
Throughout the present article, all varieties and morphisms  are assumed to be defined over a  field $\Bbbk$ of characteristic zero.

 The automorphism group of an affine surface $S$ admitting a smooth projective completion $X$ whose boundary consists of  a single smooth curve $C$ has been intensively studied during the last decades \cite{GizatulinDanilov1975,GizatulinDanilov1976,DuLa15, BD11,BD15}, inspired by the pioneering work of Gizatullin \cite{Giza70}. The upshot is that depending on whether $C$ is rational or not, the group $\mathrm{Aut}(S)$ is either an infinite dimensional group which sometimes cannot even be generated by any countable family of its algebraic subgroups, or an affine algebraic group isomorphic to the group $\mathrm{Aut}(X,C)$ of automorphisms of the pair $(X,C)$. Not much is known in higher dimension, except for the general fact if a smooth affine variety $V$ admits a smooth projective completion $X$ whose boundary $S_X$ is a smooth non-ruled hypersurface of $X$, then the restriction map $\mathrm{Aut}(X,S_X)\rightarrow \mathrm{Aut}(X\setminus S_X)$ is again an isomorphism. For affine cubic hypersurfaces, there is a folklore:

\begin{conjecture}
\label{conjecture:cubics}
Let $X$ be a smooth cubic hypersurface in the complex projective space~$\mathbb{P}^4$,
and let $S_X$ be its hyperplane section. The complement $X\setminus S_X$ is an affine cubic hypersurface in~$\mathbb{A}^4$.
Suppose that the cubic surface $S_X$ is smooth.
Then
$$
\mathrm{Aut}\left(X\setminus S_X\right)=\mathrm{Aut}\left(X,S_X\right).
$$
In particular, the group $\mathrm{Aut}(X\setminus S_X)$ is finite.
\end{conjecture}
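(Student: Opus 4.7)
Any $\varphi\in\mathrm{Aut}(X\setminus S_X)$ extends uniquely to a birational self-map $\tilde\varphi:X\dashrightarrow X$ that is biregular on $X\setminus S_X$. All indeterminacy points and all $\tilde\varphi$-exceptional divisors (and likewise for $\tilde\varphi^{-1}$) must then lie in $S_X$, so $\tilde\varphi_{*}(S_X)=S_X$ as divisors. The conjecture thus reduces to proving that $\tilde\varphi$ is everywhere biregular, which would give $\tilde\varphi\in\mathrm{Aut}(X,S_X)$ and restricting back recover $\varphi$.

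The plan is to run a log Noether--Fano analysis for the pair $(X,S_X)$. Note that $(X,S_X)$ is a log Fano pair: since $-K_X\sim 2H$ and $S_X\in|H|$, we have $-(K_X+S_X)\sim H$ ample. Set $\mathcal{M}:=\tilde\varphi^{-1}_{*}|H|$, a mobile linear system on $X$ with $\mathcal{M}\sim nH$ for some $n\geq 1$, whose base scheme is contained in $S_X$ by the preceding step. If $\tilde\varphi$ is biregular then $n=1$; assume by contradiction $n\geq 2$. A version of the Noether--Fano inequality then produces a non-canonical (``maximal'') center of the pair $\bigl(X,\tfrac{1}{n}\mathcal{M}\bigr)$, which by the containment of the base locus in $S_X$ is forced to be a subvariety $Z\subset S_X$: either a point, or an irreducible curve on the smooth cubic surface $S_X$ (a line, a conic, or a twisted cubic).

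The third step is then the classical exclusion of each candidate $Z$ via the Iskovskikh--Manin--Pukhlikov toolkit, adapted to make essential use of the constraint $Z\subset S_X$: for each type, one blows up $Z$ on $X$, analyses the proper transform $\tilde{\mathcal{M}}$ and its self-intersection against suitable test curves on the exceptional divisor, and derives an intersection-theoretic contradiction to $\mathrm{mult}_Z\mathcal{M}>n$, keeping track of the incidence of $Z$ with the $27$ lines of $S_X$. Eliminating every case forces $n=1$, hence $\tilde\varphi\in\mathrm{Aut}(X,S_X)$. Finiteness of the latter is classical, since $\mathrm{Aut}(X,S_X)\hookrightarrow\mathrm{Aut}(X)$ and the automorphism group of a smooth cubic threefold is finite.

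\textbf{Main obstacle.} The difficulty is concentrated in the exclusion of maximal centers lying on $S_X$, because a smooth cubic threefold is \emph{not} birationally super-rigid in the projective setting (projections from lines produce nontrivial birational maps of $X$ to conic bundles over $\mathbb{P}^{2}$), so no clean quotation of projective super-rigidity can be made. The argument must genuinely use the boundary condition, namely that a line $L\subset X$ with $L\not\subset S_X$ cannot appear as a maximal center in our affine setting, whereas it can in the purely projective one. Turning this heuristic into uniform numerical estimates over all curves and points of the smooth cubic surface $S_X$ --- especially in the neighbourhoods of its $27$ lines, where the log boundary itself may contribute non-log-canonical centres --- is the technical core of the conjecture.
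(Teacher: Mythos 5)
This statement is Conjecture~\ref{conjecture:cubics}: the paper does not prove it, and in fact explicitly describes it as folklore and ``far away from its proof.'' Your proposal is therefore not comparable to a proof in the paper --- and, more importantly, it is not a proof at all but a strategy outline whose decisive step is left open, as you yourself concede in the ``main obstacle'' paragraph. The reduction you describe is essentially the paper's Theorem~A (Theorem~\ref{theorem:NF-inequality}): super-rigidity, and in particular $\mathrm{Aut}(X\setminus S_X)=\mathrm{Aut}(X,S_X)$, follows once one shows that for every mobile $\mathcal{M}\sim nH$ with base locus in $S_X$ the pair $\bigl(X,S_X+\tfrac{1}{n}\mathcal{M}\bigr)$ is log canonical along $S_X$. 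But the exclusion of the resulting non-log-canonical centres on $S_X$ is precisely the content of the conjecture; no argument is supplied for it, and the paper's only sufficient criterion in this direction (Theorem~B, $\alpha(S_X,\mathrm{Diff}_{S_X}(0))\geqslant 1$) fails here, since a smooth cubic surface has $\alpha(S_X)<1$. So the gap is not a technicality to be ``turned into uniform numerical estimates''; it is the entire problem.

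Two further points deserve care. First, your claim that $\tilde\varphi_{*}(S_X)=S_X$ ``as divisors'' is not automatic: a birational self-map of $X$ that is biregular on $X\setminus S_X$ may a priori contract $S_X$ to a subvariety of smaller dimension while extracting a new boundary divisor; this is exactly the case $\widetilde{S}_X$ $g$-exceptional treated in the setup of Lemma~\ref{Lemma:NFI-1}, and ruling it out is part of what the Noether--Fano machinery must do, not a hypothesis you may assume at the outset. Second, condition (1) of Definition~\ref{definition:rigid} also requires handling isomorphisms of $X\setminus S_X$ onto affine Fano varieties with \emph{other} completions $X'$, so even granting the self-map case, the equality of automorphism groups in the conjecture is the accessible fragment of a larger statement. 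In short: your plan correctly identifies the framework the paper sets up, but it stops exactly where the paper does, at an open problem.
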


In this article, we study a similar problem for a wide class of affine varieties, which we call \emph{affine Fano varieties}.
Namely, let $X$ be a projective normal variety with $\mathbb{Q}$-factorial singularities such that its Picard group is of rank $1$,
and let $S_X$ be a prime divisor on $X$ such that the  pair $(X,S_X)$ has purely log terminal singularities.
In particular, the subvariety $S_X$ is normal. Since the divisor $S_X$ is ample, the set $X\setminus S_X$ is an affine variety.

\begin{definition}
\label{definition:affine-Fano}
If $-(K_X+S_X)$ is an ample divisor,
then the affine variety $X\setminus S_X$ is said to be \emph{an affine Fano variety} with completion $X$ and boundary $S_X$.
\end{definition}

By analogy with Mori fiber spaces in birational geometry, we introduce the following relative version of Definition~\ref{definition:affine-Fano}.

\begin{definition}
\label{definition:affine-Fano-relative}
Let $\mathcal{X}$ and $\mathcal{B}$ be quasi-projective normal varieties
such that there exists a dominant non-birational projective morphism $\rho\colon\mathcal{X}\to\mathcal{B}$ with connected fibers,
and let~$\mathcal{S}_\mathcal{X}$ be a prime divisor on $\mathcal{X}$.
Denote by $\mathcal{X}_\eta$ the scheme theoretic generic fiber of $\rho$,
so that $\mathcal{X}_\eta$ is defined over the function field of $\mathcal{B}$.
Put $\mathcal{S}_{\mathcal{X}_\eta}=\mathcal{S}_{\mathcal{X}}\vert_{\mathcal{X}_\eta}$.  We say that $\mathcal{X}\setminus \mathcal{S}_\mathcal{X}$ is a (relative) \emph{affine Fano variety} over $\mathcal{B}$ provided that
the following properties are satisfied
\begin{itemize}
\item the generic fiber  $\mathcal{X}_\eta$ is a projective variety of Picard rank $1$ with $\mathbb{Q}$-factorial singularities;
\item the  pair $(\mathcal{X}_\eta, \mathcal{S}_{\mathcal{X}_\eta})$ has purely log terminal singularities;
\item the divisors $-(K_{\mathcal{X}_\eta}+\mathcal{S}_{\mathcal{X}_\eta})$ and $\mathcal{S}_{\mathcal{X}_\eta}$ are ample.
\end{itemize}
\end{definition}

\label{remark:cylinders} This definition comprises two important notions: ``usual'' affine Fano varieties as in Definition~\ref{definition:affine-Fano}
when the base $\mathcal{B}$ is a point, and $\mathbb{A}^1$-\emph{cylinders} when $\mathrm{dim}(\mathcal{B})>0$.  Recall that an $\mathbb{A}^1$-cylinder is a variety isomorphic to $\mathcal{B}\times \mathbb{A}^1$ for some smooth quasi-projective variety~$\mathcal{B}$. In this case, the projection $\mathrm{pr}_B\colon \mathcal{X}=\mathcal{B}\times \mathbb{P}^1\to\mathcal{B}$ is a relative affine Fano variety with respect to the divisor $\mathcal{S}_{\mathcal{X}}=\mathcal{B}\times\{\infty\}$, where $\infty=\mathbb{P}^1\setminus\mathbb{A}^1$.
Projective varieties that contain Zariski open $\mathbb{A}^1$-cylinders appear naturally in many problems and questions (see \cite{Ru81,KMc99,KPZ13,DuKi15,CPW16,CPW13}). Some of them are still open. For instance, it is not known whether every
smooth projective rational variety always contains a Zariski open $\mathbb{A}^1$-cylinder or not.

In dimension one, the only affine Fano variety is the affine line $\mathbb{A}^1$,
and its automorphisms are induced by the automorphisms of its completion $\mathbb{P}^1$.
This is no longer true in higher dimensions: the complement $\mathbb{A}^2$ of a line $L$ in $\mathbb{P}^2$ is an affine Fano variety that contains an open $\mathbb{A}^1$-cylinder, for which $\mathrm{Aut}(\mathbb{A}^2) \ne \mathrm{Aut}(\mathbb{P}^2,L)$.
Keeping in mind Conjecture~\ref{conjecture:cubics} and the fact that smooth cubic threefolds in $\mathbb{P}^4$ do not contain open $\mathbb{A}^1$-cylinders (see \cite{DuKi15}) we are primarily interested in affine Fano varieties with the following properties:

\begin{enumerate}
\item Their automorphisms are induced by automorphisms of their completions.
\item They do not contain proper relative affine Fano varieties over varieties of positive dimension.
In particular, they do not contain open $\mathbb{A}^1$-cylinders.
\end{enumerate}

These resemble basic properties of the so-called \emph{birationally super-rigid} Fano varieties.
Namely, recall from \cite[Definition~1.3]{Co00} that a Fano variety $V$ with terminal $\mathbb{Q}$-factorial singularities
and  Picard rank $1$ is said to be birationally super-rigid if the following two conditions hold.
\begin{enumerate}
\item For every Fano variety $V^\prime$ with terminal $\mathbb{Q}$-factorial singularities and Picard rank~$1$,
if there exists a birational map $\phi\colon V\dasharrow V^\prime$, then $\phi$ is an isomorphism.
In particular, one has
$$
\mathrm{Bir}\left(V\right)=\mathrm{Aut}\left(V\right).
$$
\item The variety $V$ is not birational to a fibration into Fano varieties over varieties of positive dimensions.
In particular, the variety $V$ is not birationally ruled.
\end{enumerate}
The first example of birationally super-rigid Fano variety was given by Iskovskikh and Manin in \cite{IM71}, where they implicitly showed that every smooth quartic threefold is birationally super-rigid.
Since then birational super-rigidity has been proved for many higher-dimensional Fano varieties (see \cite{CPR,Ch05,Pukhlikov1998,Fernex,Pukhlikov,CP17}).

By analogy with birational super-rigidity Fano varieties, we introduce

\begin{definition}
\label{definition:rigid}
An affine Fano variety $X\setminus S_X$ with completion $X$ and boundary $S_X$
is said to be \emph{super-rigid} if the following  two conditions hold.
\begin{enumerate}
\item For every affine Fano variety $X^\prime\setminus S^\prime_{X^\prime}$ with completion $X^\prime$ and boundary $S^\prime_{X^\prime}$,
if there exists an isomorphism $\phi\colon X\setminus S_X\cong X^\prime\setminus S^\prime_{X^\prime}$,
then $\phi$ is induced by an isomorphism $X\cong X^\prime$ that maps $S_X$ onto $S^\prime_{X^\prime}$.
In particular, one has
$$
\mathrm{Aut}\left(X\setminus S_X\right)=\mathrm{Aut}\left(X,S_X\right).
$$
\item The affine Fano variety $X\setminus S_X$ does not contain relative affine Fano varieties of the same dimension over varieties of positive dimensions.
In particular, $X\setminus S_X$ does not contain open $\mathbb{A}^1$-cylinders.
\end{enumerate}
\end{definition}

The arguably simplest example of a super-rigid affine Fano surface is given by

\begin{example}[{\cite[Example~3]{DuKi15}}]
Let $C$ be an irreducible conic in $\mathbb{P}^2$. Then $\mathbb{P}^2\setminus C$ is an affine Fano variety.
If $C$ has a rational point, then $\mathbb{P}^2\setminus C$ contains an open $\mathbb{A}^1$-cylinder, so that $\mathbb{P}^2\setminus C$ is  not super-rigid. On the other hand, if the conic $C$ does not have a rational points,
then $\mathbb{P}^2\setminus C$ is super-rigid.
\end{example}

As in birational super-rigidity, in general  it is arduous  to determine whether a given affine Fano variety is super-rigid or not. For a terminal $\mathbb{Q}$-factorial Fano variety of Picard rank $1$, the following theorem, known as a classical N\"other-Fano inequality, serves as a criterion for birational super-rigidity.
\begin{theorem} Let $V$ be a terminal $\mathbb{Q}$-factorial Fano variety of Picard rank $1$.
If for any mobile linear system $\mathcal{M}_V$ on $V$ and a positive rational number $\lambda$ such that
$$
K_V+\lambda\mathcal{M}_V\sim_{\mathbb{Q}} 0,
$$
the pair $(V,\lambda \mathcal{M}_V)$ is canonical, then $V$ is super-rigid.
\end{theorem}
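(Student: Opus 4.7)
My plan is to run the classical N\"oether--Fano--Iskovskikh argument by contraposition: assuming $V$ fails one of the two conditions in the definition of birational super-rigidity, I would construct a specific mobile linear system $\mathcal{M}_V$ together with a positive rational $\lambda$ with $K_V+\lambda\mathcal{M}_V\sim_{\mathbb{Q}}0$ for which the pair $(V,\lambda\mathcal{M}_V)$ fails to be canonical. Suppose therefore that there is a birational map $\phi\colon V\dashrightarrow V'$ where either (a) $V'$ is another terminal $\mathbb{Q}$-factorial Fano of Picard rank $1$ and $\phi$ is not an isomorphism, or (b) $V'$ admits a Mori fibration $\pi\colon V'\to S$ with $\dim S>0$. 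In case (a) I would pick a very ample linear system $|H'|$ on $V'$, and in case (b) the pull-back $|H'|=|\pi^{*}A|$ of a very ample system $|A|$ from $S$. Let $\mathcal{M}_V$ be the strict transform of $|H'|$ under $\phi$; since $\operatorname{Pic}(V)\otimes\mathbb{Q}=\mathbb{Q}\cdot(-K_V)$, there is a unique positive rational $\lambda$ with $K_V+\lambda\mathcal{M}_V\sim_{\mathbb{Q}}0$.

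Next I would resolve $\phi$ by a smooth projective $W$ equipped with birational morphisms $p\colon W\to V$ and $q\colon W\to V'$, and use the standard discrepancy relations
\[
K_W=p^{*}K_V+\sum_{i}a_iE_i,\qquad p^{*}\mathcal{M}_V=\widetilde{\mathcal{M}}_W+\sum_{i}m_iE_i,
\]
together with the analogous relations on the $q$-side involving $a'_j,m'_j$ along the $q$-exceptional primes $F_j$. Pulling back $K_V+\lambda\mathcal{M}_V\sim_{\mathbb{Q}}0$ to $W$ yields
\[
K_W+\lambda\widetilde{\mathcal{M}}_W\sim_{\mathbb{Q}}\sum_{i}(a_i-\lambda m_i)E_i,
\]
while the assumed canonicity of $(V,\lambda\mathcal{M}_V)$ means $a_i\geq\lambda m_i$ for every geometric valuation over $V$ (any specific one being realised on a further blow-up of $W$). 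In case (a), introducing $\lambda'$ with $K_{V'}+\lambda'\mathcal{M}_{V'}\sim_{\mathbb{Q}}0$ and subtracting the $q$-side identity from the $p$-side, I would push the difference down to $V$ and intersect with $-K_V$, using Picard rank $1$ to convert numerical equalities into $\mathbb{Q}$-linear ones; the symmetric canonicity on $V'$, automatic from the terminality bounds $a'_j>0$, should force $\lambda=\lambda'$ and, crucially, that no component of $\widetilde{\mathcal{M}}_W$ is $\phi$-exceptional. This makes $\phi$ an isomorphism in codimension one, which together with $\mathbb{Q}$-factoriality and Picard rank $1$ upgrades to a biregular isomorphism. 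In case (b), $\mathcal{M}_{V'}$ is vertical for $\pi$, so $\mathcal{M}_{V'}\cdot\ell=0$ while $-K_{V'}\cdot\ell>0$ on a general fibre curve $\ell$ of $\pi$; transferring this inequality to $W$ and pushing down to $V$ produces a strict violation of $a_i\geq\lambda m_i$ along some $E_i$.

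The main obstacle I expect is the intersection-theoretic bookkeeping on $W$: divisors must be sorted into four classes according to whether they are exceptional for $p$, for $q$, for both, or for neither, and the signs and multiplicities along divisors that are exceptional for only one of the two morphisms (that is, the $\phi$- or $\phi^{-1}$-exceptional divisors) need to be tracked with care. The standard tool for this is the Negativity Lemma applied to each of $p$ and $q$, which delivers the required positivity in the push-forward computation; combined with the canonicity inequalities $a_i\geq\lambda m_i$, it forces the hypothetical $\phi$-exceptional divisor in case (a) or the vertical contribution in case (b) to contradict canonicity, yielding the desired conclusion.
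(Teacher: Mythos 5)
Your argument is the standard N\"other--Fano--Iskovskikh method and, as a sketch, it is correct: the contrapositive setup, the choice of $\mathcal{M}_V$ as the strict transform of a free system from the target (or from the base of the fibration), the resolution $p,q\colon W\to V,V'$, the comparison of thresholds $\lambda$ and $\lambda'$ via pushforward and the Negativity Lemma in case (a), and the intersection with a moving fibre curve in case (b) are all the right ingredients; the remaining details (the count of $p$- versus $q$-exceptional divisors via $\rho(W)-1$ to get ``isomorphism in codimension one'', and the upgrade to a biregular isomorphism using ampleness of $-K$ on both sides) are routine. Be aware, however, that the paper does not prove this statement at all: it is quoted as a classical known criterion, so there is no proof to compare against line by line. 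The closest analogue in the paper is the proof of Theorem~A via the two Log N\"other--Fano Inequalities (Lemmas~\ref{Lemma:NFI-1} and~\ref{lemma:NFI-2}), which is organized differently from your case~(a): rather than proving $\lambda=\lambda'$ and matching up exceptional divisors, the paper fixes one specific valuation --- the strict transform $\widetilde{S}_Y$ of the target boundary --- and shows by two applications of the Negativity Lemma (first to rule out $K_Y+\mu S_Y+\lambda\mathcal{M}_Y$ being non-positive, then to derive a contradiction at the adjusted threshold $\lambda'$) that its log discrepancy violates log canonicity. That formulation is forced by the presence of the boundary $S_X$ and is what makes the affine statement work; your threshold-comparison version is the cleaner route in the boundary-free classical setting you were asked to prove. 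One small imprecision worth fixing: the phrase ``no component of $\widetilde{\mathcal{M}}_W$ is $\phi$-exceptional'' is automatic for a strict transform of a mobile system; what you actually need, and what your displayed identity $\sum a'_jF_j=\sum e_iE_i$ at $\lambda=\lambda'$ delivers, is that the sets of $p$-exceptional and $q$-exceptional prime divisors coincide.
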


In Section~\ref{section:alpha}, we will establish a similar statement for  super-rigid affine Fano varieties as below.
\begin{theoremA*}
 Let $X\setminus S_X$ be an affine Fano variety with completion $X$ and boundary $S_X$.
If for any mobile linear system $\mathcal{M}_X$ on $X$ and a positive rational number $\lambda$ such that
$$
K_X+S_X+\lambda\mathcal{M}_X\sim_{\mathbb{Q}} 0,
$$
the pair $(X,S_X+\lambda \mathcal{M}_X)$ is log canonical, then $X\setminus S_X$ is super-rigid.
\end{theoremA*}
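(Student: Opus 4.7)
The plan is to adapt the classical N\"other-Fano inequality argument to the logarithmic affine setting, verifying the two conditions of Definition~\ref{definition:rigid} by contradiction. In each case, we produce from a hypothetical violation of super-rigidity a mobile linear system $\mathcal{M}_X$ on $X$ together with a rational $\lambda>0$ satisfying $K_X+S_X+\lambda\mathcal{M}_X\sim_{\mathbb{Q}}0$, such that $(X,S_X+\lambda\mathcal{M}_X)$ fails to be log canonical, directly contradicting the hypothesis.

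For condition~(1), suppose there is an isomorphism $\phi\colon X\setminus S_X\to X'\setminus S'_{X'}$ of affine Fano varieties whose birational extension $X\dashrightarrow X'$ is not biregular. Fix a common log resolution $p\colon W\to X$, $p'\colon W\to X'$ that is an isomorphism over $X\setminus S_X\cong X'\setminus S'_{X'}$. Because this open set is simultaneously the complement of $p^{-1}(S_X)$ and $(p')^{-1}(S'_{X'})$ in $W$, every $p$-exceptional prime divisor on $W$ has center in $S_X$ and is also $p'$-exceptional with center in $S'_{X'}$; call these common exceptional divisors $E_1,\dots,E_n$, and note that $n\ge 1$ precisely because $\phi$ fails to be biregular. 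Moreover the strict transforms of $S_X$ and $S'_{X'}$ coincide on $W$. Choose a very ample complete linear system $\mathcal{M}'$ on $X'$, set $\mathcal{M}_X=\phi^{-1}_*\mathcal{M}'$, and pick positive rationals $\lambda,\lambda'$ with $K_X+S_X+\lambda\mathcal{M}_X\sim_{\mathbb{Q}}0$ and $K_{X'}+S'_{X'}+\lambda'\mathcal{M}'\sim_{\mathbb{Q}}0$. Subtracting the two log discrepancy identities on $W$ and pushing forward to $X$, the Picard rank~$1$ hypothesis combined with the ampleness of $\mathcal{M}_X$ forces $\lambda=\lambda'$; the negativity lemma applied to the resulting $p$-exceptional, $\mathbb{Q}$-trivial divisor then yields
$$
a\bigl(E_i;\,X,\,S_X+\lambda\mathcal{M}_X\bigr)=a\bigl(E_i;\,X',\,S'_{X'}+\lambda'\mathcal{M}'\bigr)\quad\text{for all } i=1,\dots,n.
$$
The crux is the log N\"other-Fano inequality: failure of $\phi$ to extend biregularly must force at least one of these common discrepancies to be strictly less than $-1$, contradicting the log canonicity hypothesis on $X$.

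For condition~(2), suppose $X\setminus S_X$ contains a Zariski open subset isomorphic to a relative affine Fano $\rho\colon\mathcal{X}\setminus\mathcal{S}_\mathcal{X}\to\mathcal{B}$ of the same dimension with $\dim\mathcal{B}>0$. Pulling back a very ample complete linear system from a projective completion of $\mathcal{B}$ along the rational map $X\dashrightarrow\mathcal{B}$ gives a mobile linear system $\mathcal{M}_X$ on $X$ whose general members are divisorial unions of fibers of $\rho$. Choose $\lambda>0$ with $K_X+S_X+\lambda\mathcal{M}_X\sim_{\mathbb{Q}}0$. The verticality of $\mathcal{M}_X$ yields $C\cdot\mathcal{M}_X=0$ for a general curve $C$ in a general fiber of $\rho$, while the base components of $\mathcal{M}_X$ accumulate inside $S_X$. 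A multiplicity analysis along the exceptional divisors over $S_X$ in a resolution of $X\dashrightarrow\mathcal{B}$, parallel to condition~(1), produces an exceptional divisor whose discrepancy with respect to $(X,S_X+\lambda\mathcal{M}_X)$ is strictly less than $-1$, again contradicting the hypothesis. The main obstacle in both parts is the log N\"other-Fano inequality itself: translating the geometric obstruction into a sharp bound $a(E;X,S_X+\lambda\mathcal{M}_X)<-1$ at a specific exceptional divisor $E$, by carefully tracking multiplicities of the pullback linear system along the exceptional divisors extracted when resolving the relevant indeterminacy locus, combined with the boundary contribution of $S_X$ to the discrepancy computation.
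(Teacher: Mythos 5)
Your overall strategy---argue by contradiction against each of the two conditions of Definition~\ref{definition:rigid}, pulling back a very ample complete system from the other completion (resp.\ from the base of the fibration) and exhibiting a non-log-canonical place over $S_X$---is the same as the paper's. But two steps are genuinely wrong, and the step you yourself call ``the crux'' is asserted rather than proved. First, in part (1) you claim that the strict transforms of $S_X$ and $S'_{X'}$ coincide on the common resolution $W$. This is false precisely in the case that matters: since $X$ and $X'$ are $\mathbb{Q}$-factorial of Picard rank $1$, if $\phi$ is not an isomorphism in codimension $1$ then the strict transform $\widetilde{S}'$ of $S'_{X'}$ is exceptional over $X$ with center contained in $S_X$, the strict transform $\widetilde{S}_X$ is exceptional over $X'$, and these are distinct divisors on $W$. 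It is exactly $\widetilde{S}'$ whose log discrepancy with respect to $(X,S_X+\lambda\mathcal{M}_X)$ must be shown to be negative; by identifying the two boundaries you assume away the interesting case. Second, the assertions that Picard rank $1$ forces $\lambda=\lambda'$ and that the discrepancies $a(E_i)$ agree on both sides are unjustified---and, if true, would combine with the base-point-freeness of $\mathcal{M}'$ and the purely log terminal hypothesis on $(X',S'_{X'})$ to give log canonicity rather than its failure. The relation between $\lambda$ and $\lambda'$ has to be \emph{derived}: in the paper's Lemma~\ref{Lemma:NFI-1} one assumes the log discrepancy of $\widetilde{S}'$ is nonnegative, applies the Negativity Lemma to the difference of the two ramification formulas (using $1+b>0$, which comes from the plt hypothesis on the target pair) to conclude that $K_{X'}+S'_{X'}+\lambda\mathcal{M}'$ is ample, hence $\lambda>\lambda'$, and then reruns the formulas with $\lambda'$ and applies the Negativity Lemma a second time to the now anti-ample $K_X+S_X+\lambda'\mathcal{M}_X$ to reach a contradiction. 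None of this two-step mechanism appears in your sketch.

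In part (2) the decisive computation is likewise missing. Because the members of $\mathcal{H}_X$ are proper transforms of divisors pulled back from the base, they restrict trivially to the generic fiber $Y_\eta$ of $\rho$; pushing the ramification identity down to $\overline{Y}$ and restricting to $Y_\eta$ (where every divisor exceptional over $X$ is either exceptional over $\overline{Y}$ or vertical for $\bar\rho$) yields $-(a+1)S_{Y_\eta}\sim_{\mathbb{Q}}-(K_{Y_\eta}+S_{Y_\eta})$, and the ampleness of both $-(K_{Y_\eta}+S_{Y_\eta})$ and $S_{Y_\eta}$ forces $a+1<0$. Your statement that $C\cdot\mathcal{M}_X=0$ for a general curve $C$ in a general fiber cannot hold on $X$ itself, since there the class of $\mathcal{M}_X$ is proportional to the ample generator of the rank-one Picard group and meets every curve positively; verticality is only available upstairs on $\overline{Y}$, which is exactly why the restriction to the generic fiber is the correct move, and the subsequent ``multiplicity analysis'' you invoke is never specified.
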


From the viewpoint of the classical N\"other-Fano inequality, a terminal $\mathbb{Q}$-factorial Fano variety~$V$ of Picard rank $1$ is birationally super-rigid provided that
\begin{quote}
 (*) the pair $(V, D)$ is canonical for every effective $\mathbb{Q}$-divisor $D$ that is $\mathbb{Q}$-linearly equivalent to $-K_V$.
 \end{quote}
 Note that this condition is not a necessary condition for birational super-rigidity.

In order to introduce analogy with this sufficient condition, we use
 a generalized version of Tian's $\alpha$-invariant, which was introduced in \cite{Tian} using a different language.
Let $X\setminus S_X$  be an affine Fano variety with completion $X$ and boundary $S_X$.
Then there exists a uniquely determined effective $\mathbb{Q}$-divisor $\mathrm{Diff}_{S_X}(0)$ on $S_X$,
usually called a \emph{different}, which was introduced by Shokurov in \cite{Sho93}.
Namely, let $R_{1},\ldots,R_{s}$ be all irreducible components of the locus $\mathrm{Sing}(X)$ of dimension $\mathrm{dim}(X)-2$ that are contained in $S_X$. Then
$$
\mathrm{Diff}_{S_X}\left(0\right)=\sum_{i=1}^{s}\frac{m_{i}-1}{m_{i}}R_{i},
$$
where $m_{i}$ is the~smallest positive integer such that $m_{i}S_X$ is~Cartier at a~general point of the divisor $R_{i}$.
By the adjunction formula, we have
$$
K_{S_X}+\mathrm{Diff}_{S_X}(0)\sim_{\mathbb{Q}} (K_X+S_X)\vert_{S_X}.
$$
Moreover, by Adjunction (see \cite[Theorem~7.5]{Ko97}), the  pair $(S_X,\mathrm{Diff}_{S_X}(0))$ has Kawamata log terminal singularities.
Thus, since  $X\setminus S_X$ is an affine Fano variety, the pair $(S_X,\mathrm{Diff}_{S_X}(0))$ is a log Fano variety in a usual sense.

The $\alpha$-invariant of the pair $(S_X,\mathrm{Diff}_{S_X}(0))$ is defined as
\[\alpha(S_X,\mathrm{Diff}_{S_X}(0))=
\mathrm{sup}\left\{\ \lambda \ \left| \
 \aligned
& \mbox{the  pair } (S_X, \mathrm{Diff}_{S_X}(0)+\lambda D) \mbox{ is log canonical for every}\\
& \mbox{effective $\mathbb{Q}$-divisor $D$ such that } D\sim_{\mathbb{Q}} -\left(K_{S_X}+\mathrm{Diff}_{S_X}(0)\right)\\
\endaligned\right.\right\}.
\]

Then the condition $\alpha(S_X,\mathrm{Diff}_{S_X}(0))\geqslant 1$ will be an analogy of the condition (*).

\begin{theoremB*}
Let $X\setminus S_X$ be an affine Fano variety with completion $X$ and boundary~$S_X$.
If $\alpha(S_X,\mathrm{Diff}_{S_X}(0))\geqslant 1$, then the affine Fano variety $X\setminus S_X$ is super-rigid.
\end{theoremB*}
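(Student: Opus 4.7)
The plan is to reduce to Theorem~A by verifying its hypothesis: for every mobile linear system $\mathcal{M}_X$ on $X$ and every positive rational number $\lambda$ with $K_X+S_X+\lambda\mathcal{M}_X\sim_{\mathbb{Q}}0$, we must show that the pair $(X,S_X+\lambda\mathcal{M}_X)$ is log canonical. Fix such data and let $M$ be a general member of $\mathcal{M}_X$; since $\mathcal{M}_X$ is mobile, $M$ does not contain $S_X$, and since $X$ has Picard rank $1$ while $M$ is a non-trivial effective divisor, $M$ is ample.

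First I would restrict the given linear equivalence to $S_X$. Adjunction gives $(K_X+S_X)|_{S_X}\sim_{\mathbb{Q}}K_{S_X}+\mathrm{Diff}_{S_X}(0)$, so that $\lambda M|_{S_X}$ is an effective $\mathbb{Q}$-divisor in the class $-(K_{S_X}+\mathrm{Diff}_{S_X}(0))$. The hypothesis $\alpha(S_X,\mathrm{Diff}_{S_X}(0))\geqslant 1$ then immediately yields that $(S_X,\mathrm{Diff}_{S_X}(0)+\lambda M|_{S_X})$ is log canonical. By Koll\'ar's inversion of adjunction for log canonical pairs, $(X,S_X+\lambda M)$ is log canonical in some Zariski open neighborhood $U$ of $S_X$.

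The main obstacle, and the crux of the argument, is to propagate log canonicity from $U$ to the entire $X$: the $\alpha$-invariant only controls singularities along $S_X$, and a priori non-lc points of $(X,S_X+\lambda M)$ could persist in the affine part $X\setminus S_X$. I would remove this obstruction via a perturbation-connectedness argument. Consider the slightly perturbed pair $(X,S_X+(1-\epsilon)\lambda M)$ for small $\epsilon>0$: then $-(K_X+S_X+(1-\epsilon)\lambda M)\sim_{\mathbb{Q}}\epsilon\lambda M$ is ample, so the Koll\'ar-Shokurov connectedness theorem guarantees that its non-klt locus is connected. This non-klt locus contains $S_X$ (coefficient $1$), and a direct log discrepancy computation using that $X$ is klt (a consequence of $(X,S_X)$ being plt) together with log canonicity of $(X,S_X+\lambda M)$ on $U$ shows that, for $\epsilon$ sufficiently small, its intersection with $U$ equals $S_X$. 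If $(X,S_X+\lambda M)$ failed to be log canonical at some $p\in X\setminus S_X$, then by continuity of log discrepancies in the coefficient, $p$ would remain non-lc, hence non-klt, for $(X,S_X+(1-\epsilon)\lambda M)$ for small $\epsilon$, producing a connected component of the non-klt locus disjoint from $S_X$, contradicting connectedness. Therefore $(X,S_X+\lambda M)$ is log canonical on all of $X$, and Theorem~A gives the super-rigidity of $X\setminus S_X$.
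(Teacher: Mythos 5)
Your argument is correct, and its first half is exactly the paper's proof: restrict the linear equivalence to $S_X$, use $\alpha(S_X,\mathrm{Diff}_{S_X}(0))\geqslant 1$ to see that $(S_X,\mathrm{Diff}_{S_X}(0)+\lambda M|_{S_X})$ is log canonical, and invoke inversion of adjunction to conclude that $(X,S_X+\lambda\mathcal{M}_X)$ is log canonical near $S_X$. The difference lies in what happens next. The paper's Theorem~A, in the precise form proved in Section~2 (Theorem~\ref{theorem:NF-inequality}), only requires the pair $(X,S_X+\lambda\mathcal{M}_X)$ to be log canonical \emph{along} $S_X$ --- this is exactly what Corollaries~\ref{corollary:NFI-1} and~\ref{corollary:NFI-2} deliver, since the offending divisor $\widetilde{S}_Y$ in both N\"other--Fano inequalities has its center contained in $S_X$. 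Consequently the paper's proof of Theorem~B stops after the inversion-of-adjunction step. You, working from the weaker statement of Theorem~A given in the introduction (without ``along $S_X$''), add a third step propagating log canonicity from a neighborhood of $S_X$ to all of $X$ via the Koll\'ar--Shokurov connectedness theorem applied to $(X,S_X+(1-\epsilon)\lambda M)$. That step is correct: since $X$ is klt and $(X,\lambda M)$ is log canonical on $U\setminus S_X$, linearity of log discrepancies in the coefficient shows the perturbed pair is klt on $U\setminus S_X$, so a residual non-lc point in $X\setminus U$ would disconnect the non-klt locus. So your proof is complete, but it carries an extra argument that becomes unnecessary once one observes that the N\"other--Fano obstruction is localized along the boundary; conversely, your connectedness step is a clean proof that for these pairs log canonicity along $S_X$ already implies log canonicity everywhere, which is a slightly stronger conclusion than the paper records.
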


This sufficient condition allows to give large supply of super-rigid affine Fano varieties in every dimension bigger than or equal to $3$.
As in birational super-rigidity, it can scarcely be expected that the condition in Theorem~B is  sufficient and necessary to be super-rigid. However our results (\textbf{Theorems~\ref{theorem:quotient-n-2}, \ref{theorem:quotient-n-3} and~\ref{theorem:quotient-n-4}}) show that it  plays a role of a criterion for affine Fano varieties of a certain type to be super-rigid.

With Theorem~B and some result on non-rationality of Fano varieties we can provide an instructive example that fosters our understanding of Conjecture~\ref{conjecture:cubics}. The example below stands in the same stream as Conjecture~\ref{conjecture:cubics}, with slightly less difficulty.

\begin{example}
\label{example:Grinenko}
Let $X$ be a smooth hypersurface in $\mathbb{P}(1,1,1,2,3)$ of degree $6$ and $S_X$ be a hyperplane section of $X$ by an equation of degree $1$ with at most Du Val singularities. The hyperplane section $S_X$ is a del Pezzo surface of anticanonical degree $1$. The pair $(X,S_X)$ has purely log terminal singularities, and hence $X\setminus S_X$ is an affine Fano variety.
It follows from \cite{Ch07a, ChKo14} that $\alpha(S_X,\mathrm{Diff}_{S_X}(0))=1$ if the following two conditions are satisfied:
\begin{enumerate}
\item the surface $S_X$ has only singular points of types $\mathrm{A}_1$, $\mathrm{A}_2$ and $\mathrm{A}_3$;
\item the linear system $|-K_{S_X}|$ does not contain cuspidal curves.
\end{enumerate}
In case, Theorem~B immediately implies that $X\setminus S_X$ is super-rigid. We do not know in general if $X\setminus S_X$ is super-rigid or not. We may draw only a conclusion that $\mathrm{Aut}(X\setminus S_X)=\mathrm{Aut}(X,S_X)$ regardless of how singular the surface $S_X$ is. This follows from the result of Grinenko (\cite{Grinenko1,Grinenko2}) that every birational automorphism of $X$ is biregular, i.e., $\mathrm{Bir}(X)=\mathrm{Aut}(X)$.
\end{example}

It is natural that there should be a lot of mysteries in the border area between super-rigidity and non-super-rigidity. For those affine Fano varieties $X\setminus S_X$ near the border even the problem to determine  whether the groups $\mathrm{Aut}(X\setminus S_X)$ and $\mathrm{Aut}(X,S_X)$ coincide or not is subtle.
We believe that smooth cubic threefolds lie in this border area from the super-rigidity side.
This is one of the reasons why Conjecture~\ref{conjecture:cubics} is far away from its proof.

Meanwhile, as in birational super-rigidity, the super-rigidity of an affine Fano variety $X\setminus S_X$ may be easily destroyed if we allow some singularities in the boundary $S_X$. This has been partially observed and investigated in \cite{KPZ11, DuKi15} from singular cubic surfaces. In Section~~\ref{section:complements} we provide a complete and uniform picture for the complements of del Pezzo surfaces in the context of affine Fano varieties. We obtain in particular  the following characterization of non super-rigid complement of del Pezzo surfaces:

\begin{theoremC*}\label{theoremC} Let $S$ be a del Pezzo hypersurface of anticanonical degree at most $3$ with Du Val singularities in a (weighted) projective space $\mathbb{P}$. If the surface $S$ contains an open $(-K_S)$-cylinder, then $\mathbb{P}\setminus S$ is an affine Fano variety that contains an open $\mathbb{A}^1$-cylinder. In particular, it is not super-rigid.
\end{theoremC*}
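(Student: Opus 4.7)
The plan is to verify that $\mathbb{P}\setminus S$ satisfies Definition~\ref{definition:affine-Fano} and then construct an explicit $\mathbb{A}^1$-cylinder in $\mathbb{P}\setminus S$ using the hypothesized $(-K_S)$-polar cylinder $U\subset S$. The three cases (anticanonical del Pezzo hypersurfaces of degree $3$, $4$, $6$ in $\mathbb{P}^3$, $\mathbb{P}(1,1,1,2)$, $\mathbb{P}(1,1,2,3)$ respectively) can be handled uniformly because in every case the sum of the weights of $\mathbb{P}$ exceeds $\deg(S)$ by one, so $-(K_{\mathbb{P}}+S)\sim\mathcal{O}_{\mathbb{P}}(1)$ and hence $-K_S\sim\mathcal{O}_{\mathbb{P}}(1)|_S$ by adjunction.

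Checking that $\mathbb{P}\setminus S$ is an affine Fano variety is essentially bookkeeping: $S$ is ample as an effective hypersurface, $-(K_{\mathbb{P}}+S)\sim\mathcal{O}_{\mathbb{P}}(1)$ is ample, and the pair $(\mathbb{P},S)$ is purely log terminal by inversion of adjunction, since the different $\mathrm{Diff}_S(0)$ vanishes (the quasi-smooth hypersurface $S$ avoids the singular locus of the ambient weighted projective space) and $(S,0)$ is Kawamata log terminal by the hypothesis that $S$ has only Du Val singularities.

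For the cylinder construction, write $S\setminus U=\mathrm{Supp}(D)$ with $D\sim_{\mathbb{Q}}-K_S\sim_{\mathbb{Q}}\mathcal{O}_{\mathbb{P}}(1)|_S$ and choose $m\geqslant 1$ so that $mD$ is integral Cartier. Lift $mD$ to an effective divisor $\widetilde{D}\in|\mathcal{O}_{\mathbb{P}}(m)|$ with $\widetilde{D}|_S=mD$, using the surjectivity of the restriction map $H^0(\mathbb{P},\mathcal{O}(m))\to H^0(S,\mathcal{O}_S(m))$ (Kawamata--Viehweg vanishing applied to the ideal sequence $0\to\mathcal{O}_{\mathbb{P}}(m-\deg S)\to\mathcal{O}_{\mathbb{P}}(m)\to\mathcal{O}_S(m)\to 0$); set-theoretically $\widetilde{D}\cap S=S\setminus U$. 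Setting $b=\deg(S)$, the sections $\widetilde{D}^{b}$ and $S^{m}$ of $\mathcal{O}_{\mathbb{P}}(bm)$ define a rational map $\mathbb{P}\dashrightarrow\mathbb{P}^1$ whose restriction to $\mathbb{P}\setminus S$ is the regular function $f=\widetilde{D}^{b}/S^{m}\colon\mathbb{P}\setminus S\to\mathbb{A}^1$. A general fiber of $f$ meets $S$ only along $\mathrm{Supp}(D)$, so the $\mathbb{A}^1$-fibration $U\to Z$ propagates through $f$: after a suitable affine modification of $\mathbb{P}$ (a weighted blowup centered in $\widetilde{D}\cap S$ followed by removal of strict transforms of $S$ and of appropriate components of $\widetilde{D}$) one obtains an open subset $V\subset\mathbb{P}\setminus S$ together with an isomorphism $V\cong W\times\mathbb{A}^1$, where the $\mathbb{A}^1$-factor extends the $\mathbb{A}^1$-factor of $U$. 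This is the cylinder-lifting template of~\cite{KPZ13, DuKi15}, specialized uniformly to the three weighted-hypersurface embeddings.

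The main obstacle lies in this affine modification step. The morphism $f$ by itself only provides an $\mathbb{A}^1$-fibered open subset of $\mathbb{P}\setminus S$; upgrading to a trivial product $W\times\mathbb{A}^1$ requires the modification to produce a tautological section coming from the $Z$-factor of $U$, and the resulting variety to embed as an open subset of $\mathbb{P}\setminus S$. Verifying the compatibility of these two pieces---the intrinsic construction of the modification and its identification with an open subvariety of the affine Fano $\mathbb{P}\setminus S$---is the technical heart of the proof, and must be carried out slightly differently in each of the three weighted ambients because the behaviour of the weighted blowup depends on which coordinates of $\mathbb{P}$ are involved in the equation cutting out $\widetilde{D}$.
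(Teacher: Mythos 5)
Your reduction of the statement to exhibiting an open $\mathbb{A}^1$-cylinder in $\mathbb{P}\setminus S$ is fine, and the verification that $\mathbb{P}\setminus S$ is an affine Fano variety is indeed routine. But the construction of the cylinder itself is where your argument stops short, and by your own admission: the ``affine modification step'' that you identify as the technical heart is precisely the content of the theorem, and nothing in your proposal actually carries it out. Two concrete problems. First, the cylinder-lifting results of \cite{KPZ13,DuKi15} that you invoke as a ``template'' concern affine \emph{cones} over a polarized pair $(S,H)$: an $H$-polar cylinder in $S$ lifts to a cylinder in $\mathrm{Spec}\bigl(\bigoplus_m H^0(S,mH)\bigr)$ because the cone deformation-retracts onto $S$ along the $\mathbb{G}_m$-orbits. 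The complement $\mathbb{P}\setminus S$ has no such structure over $S$, so there is no mechanism by which the fibration $U\to Z$ ``propagates''. Second, the general fiber of your function $f=\widetilde{D}^{b}/S^{m}$ is an open piece of a general member of the pencil spanned by $b\widetilde{D}$ and $mS$, i.e.\ of a hypersurface of degree $bm\cdot\deg\mathcal{O}_{\mathbb{P}}(1)$, about whose geometry you know nothing; the fact that this fiber meets $S$ only along $\mathrm{Supp}(D)$ does not endow it with an $\mathbb{A}^1$-fibration.

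The paper's proof fills exactly this gap, and it does so by \emph{not} using the given cylinder $U$ directly. Instead it uses \cite[Theorem~1.5]{CPW16} to convert the hypothesis into the existence of a singular point $P\in S$ of restricted type, puts the equation of $S$ into a normal form $xg(y,z,w)+(\cdots)=0$ centered at $P$, and shows by explicit projection from $P$ that $S\setminus(H\cup L_1\cup\cdots\cup L_r)\cong(\mathbb{A}^1\setminus\{0\})\times\mathbb{A}^1$ for a suitable hyperplane section $H$. The crucial point is that this construction is \emph{equational and uniform in a pencil}: the pencil spanned by $S$ and $\mathcal{H}+\mathcal{Q}$ (of the \emph{same} degree $d$, not $bm$) has generic member $S_\eta$ over $\Bbbk(\lambda)$ which is again singular at $[1:0:0:0]$ and admits the identical projection, hence contains an $\mathbb{A}^1$-cylinder defined over $\Bbbk(\lambda)$; the theorem on vertical cylinders in del Pezzo fibrations \cite[Lemma~3]{DuKi16} then produces the cylinder in $\mathbb{P}\setminus S$. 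If you want to salvage your approach, you would need to replace your high-degree pencil by one whose generic member is again a del Pezzo surface carrying a $(-K)$-polar cylinder over $\Bbbk(\lambda)$, which in practice forces you back to the explicit normal forms and the projection from the singular point.
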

Here we recall from \cite[Definition~1.3]{CPW16} that an open $\mathbb{A}^1$-cylinder $U$ in a del Pezzo surface $S$ is said to be $(-K_S)$-\emph{polar} if $S\setminus U$ is the support of an effective divisor $\mathbb{Q}$-linearly equivalent to the anti-canonical divisor $-K_S$ of $S$.

\section{Log N\"other-Fano Inequalities and $\alpha$-invariant of Tian}
\label{section:alpha}

In this section, we will establish Theorems~A and~B. For this goal we generalise the classical N\"other-Fano inequality for affine Fano varieties $X\setminus S_X$ in the context of purely log terminal pairs $(X, S_X)$.

First, we suppose that there is an affine Fano variety $Y\setminus S_Y$ and a birational map $\phi\colon X\dasharrow Y$ such that
\begin{enumerate}
\item the map $\phi$ is not an isomorphism in codimension $1$;
\item the map $\phi$ induces an isomorphism of $X\setminus S_X$ onto $Y\setminus S_Y$.
\end{enumerate}
Let $\mathcal{M}_Y$ be a very ample  complete  system on $Y$, and let $\mathcal{M}_X$ be the proper transform of $\mathcal{M}_Y$ by~$\phi^{-1}$.
We consider the following resolution of indeterminacy of $\phi$
$$
\xymatrix{
&&W\ar@{->}[ld]_{f}\ar@{->}[rd]^{g}&&\\%
&X\ar@{-->}[rr]_{\phi}&&Y,&}
$$ %
where $f$ and $g$ are birational morphisms and $W$ is a smooth projective variety.
Let $\widetilde{S}_X$ and $\widetilde{S}_Y$ be proper transforms of $S_X$ and $S_Y$ by $f$ and $g$, respectively.
Due to the conditions above, the divisor $\widetilde{S}_X$ is $g$-exceptional,
the divisor $\widetilde{S}_Y$ is $f$-exceptional, and $f(\widetilde{S}_Y)$ is contained in $S_X$.

By analogy with the notion of $\epsilon$-Kawamata log terminal singularities, we say that a pair $(Y,S_Y)$ with irreducible and reduced boundary $S_Y$ has $\epsilon$-purely log terminal singularities, if for every resolution of singularities $h\colon Z\rightarrow Y$, where $Z$ is a smooth projective variety, the discrepancy of every exceptional divisor of $h$ is bigger than $-1+\epsilon$.

\begin{lemma}[Log N\"other-Fano Inequality I]
\label{Lemma:NFI-1}
Suppose that $(Y,S_Y)$ has $\epsilon$-purely log terminal for some positive rational number $\epsilon$.
Let $\mu$ and $\lambda$  be non-negative  rational numbers such that $ 1-\epsilon \leqslant \mu \leqslant 1$ and
$$
K_X+\mu S_X+\lambda\mathcal{M}_X\sim_{\mathbb{Q}} 0.
$$
Then the log discrepancy of  $\widetilde{S}_Y$ with respect to  $(X,\mu S_X+\lambda\mathcal{M}_X)$  is less than $1-\mu$.
\end{lemma}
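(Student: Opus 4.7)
The plan is to compare log-pullback relations on the common resolution $W$ from both sides, use Picard rank $1$ of $Y$ to extract numerical identities for the discrepancies of $\widetilde{S}_Y$ and $\widetilde{S}_X$, and invoke the $\epsilon$-plt hypothesis to force the required strict inequality. Since $Y$ has Picard rank $1$ with $\mathbb{Q}$-factorial singularities and ample divisors $S_Y$, $\mathcal{M}_Y$, and $-(K_Y+S_Y)$, there exist unique positive rationals $\lambda'$, $t$, $t'$ satisfying $K_Y+\mu S_Y+\lambda'\mathcal{M}_Y\sim_{\mathbb{Q}}0$, $\mathcal{M}_Y\sim_{\mathbb{Q}} tS_Y$, and $\mathcal{M}_X\sim_{\mathbb{Q}} t'S_X$. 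Writing $a^X(E)=a(E;X,\mu S_X+\lambda\mathcal{M}_X)$ and $a^Y(F)=a(F;Y,\mu S_Y+\lambda'\mathcal{M}_Y)$, and using that $\mathcal{M}_Y$ is base-point-free so that $g^{*}\mathcal{M}_Y=\widetilde{\mathcal{M}}=f^{-1}_{*}\mathcal{M}_X$, the standard log-pullback computations give
\[
K_W+\mu\widetilde{S}_X+\lambda\widetilde{\mathcal{M}}\sim_{\mathbb{Q}}\sum_{E\in\mathrm{Exc}(f)}a^X(E)\,E,\qquad K_W+\mu\widetilde{S}_Y+\lambda'\widetilde{\mathcal{M}}\sim_{\mathbb{Q}}\sum_{F\in\mathrm{Exc}(g)}a^Y(F)\,F.
\]

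Subtracting these two relations, the core geometric observation is that since $\phi$ restricts to an isomorphism $X\setminus S_X\cong Y\setminus S_Y$, any $f$-exceptional prime divisor on $W$ that is not $g$-exceptional must map onto $S_Y$ via $g$, and therefore equals $\widetilde{S}_Y$; symmetrically $\widetilde{S}_X$ is the unique $g$-exceptional prime divisor that is not $f$-exceptional. Pushing the subtracted relation forward by $g$ then kills every $g$-exceptional contribution and leaves $-\mu S_Y+(\lambda-\lambda')\mathcal{M}_Y\sim_{\mathbb{Q}}a^X(\widetilde{S}_Y)\,S_Y$; matching coefficients against the class of $S_Y$ in the one-dimensional $\mathbb{Q}$-vector space $\mathrm{Pic}(Y)\otimes\mathbb{Q}$ yields
\[
a^X(\widetilde{S}_Y)=-\mu+(\lambda-\lambda')t.
\]
A symmetric $f$-pushforward argument produces $a^Y(\widetilde{S}_X)=-\mu+(\lambda'-\lambda)t'$.

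It remains to show $\lambda<\lambda'$. Since $\widetilde{S}_X$ is a $g$-exceptional prime divisor on $W$, the $\epsilon$-plt assumption on $(Y,S_Y)$ gives $a(\widetilde{S}_X;Y,S_Y)>-1+\epsilon$, and base-point-freeness of $\mathcal{M}_Y$ implies $\mathrm{ord}_{\widetilde{S}_X}(\mathcal{M}_Y)=0$, so
\[
a^Y(\widetilde{S}_X)=a(\widetilde{S}_X;Y,S_Y)+(1-\mu)\,\mathrm{ord}_{\widetilde{S}_X}(S_Y).
\]
Combined with the identity $a^Y(\widetilde{S}_X)=-\mu+(\lambda'-\lambda)t'$ and the hypothesis $1-\mu\leqslant\epsilon$, a short calculation gives
\[
(\lambda'-\lambda)t'>\epsilon-(1-\mu)\bigl(1-\mathrm{ord}_{\widetilde{S}_X}(S_Y)\bigr)\geqslant 0,
\]
so $\lambda<\lambda'$ and therefore $a^X(\widetilde{S}_Y)=-\mu+(\lambda-\lambda')t<-\mu$, which is the asserted bound on the log discrepancy. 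The main obstacle is translating the $\epsilon$-plt bound for $\widetilde{S}_X$ over $Y$ into the strict inequality $\lambda<\lambda'$ via the Picard-rank-one identities; the hypothesis $\mu\geqslant 1-\epsilon$ is precisely what is needed to preserve strictness in the borderline case $\mathrm{ord}_{\widetilde{S}_X}(S_Y)=0$.
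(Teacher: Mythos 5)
Your proof is correct, and it reaches the conclusion by a genuinely different mechanism than the paper. Both arguments share the same skeleton: the two crepant pullback relations on $W$, the observation that $\mathrm{Exc}(f)$ and $\mathrm{Exc}(g)$ differ only by $\widetilde{S}_Y$ and $\widetilde{S}_X$ respectively, the auxiliary threshold $\lambda'$ on the $Y$-side, and the $\epsilon$-plt hypothesis (together with $\mu\geqslant 1-\epsilon$ and base-point-freeness of $\mathcal{M}_Y$) used to bound the discrepancy of $\widetilde{S}_X$ over $(Y,S_Y)$ from below by $-\mu$. Where you diverge is in how the inequality is extracted: the paper argues by contradiction, assuming $a+\mu\geqslant 0$ and applying Shokurov's Negativity Lemma twice (once to show $K_Y+\mu S_Y+\lambda\mathcal{M}_Y$ is positive, once after replacing $\lambda$ by $\lambda'$ to reach the absurdity $\mu+a'\leqslant 0$), whereas you push the subtracted relation forward by $g$ and by $f$ and use $\mathbb{Q}$-factoriality and Picard rank one to obtain the exact identities $a^X(\widetilde{S}_Y)=-\mu+(\lambda-\lambda')t$ and $a^Y(\widetilde{S}_X)=-\mu+(\lambda'-\lambda)t'$, from which $\lambda<\lambda'$ and hence $a^X(\widetilde{S}_Y)<-\mu$ follow by arithmetic. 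Your route buys a direct, quantitative statement (the two discrepancies are pinned down exactly in terms of $\lambda-\lambda'$) and avoids the Negativity Lemma altogether, at the cost of leaning more explicitly on the rank-one hypothesis for both $X$ and $Y$; the paper's negativity-lemma argument is structurally closer to the version needed for Lemma 2.4, where the target is a fibration and no such one-dimensional $N^1$ is available. All the supporting claims you use (positivity of $t$ and $t'$, $g^{*}\mathcal{M}_Y=\widetilde{\mathcal{M}}$ for a general member of a base-point-free system, and the formula $a^Y(\widetilde{S}_X)=a(\widetilde{S}_X;Y,S_Y)+(1-\mu)\,\mathrm{ord}_{\widetilde{S}_X}(g^{*}S_Y)$) are justified, so I see no gap.
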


\begin{proof}
We have
$$
K_W+\mu\widetilde{S}_X+\lambda\mathcal{M}_W\sim_{\mathbb{Q}}f^*(K_X+\mu S_X+\lambda\mathcal{M}_X)+a\widetilde{S}_Y+\sum a_iE_i,
$$
and
$$
K_W+\mu\widetilde{S}_Y+\lambda\mathcal{M}_W\sim_{\mathbb{Q}}g^*(K_Y+\mu S_Y+\lambda\mathcal{M}_Y)+b\widetilde{S}_X+\sum b_iE_i,
$$
where each $E_i$ is simultaneously $f$-exceptional and $g$-exceptional. The hypothesis that $(Y,S_Y)$ is $\epsilon$-purely log terminal immediately implies that $\mu+b$ is positive.

Suppose $a+\mu\geqslant 0$. Since  $\mu+b$ is positive, applying Negativity Lemma (\cite[1.1]{Sho93}) to
$$
(\mu+a)\widetilde{S}_Y-g^*(K_Y+\mu S_Y+\lambda\mathcal{M}_Y)\sim_{\mathbb{Q}}(\mu+b)\widetilde{S}_X+\sum (b_i-a_i)E_i
$$
we see that $K_Y+\mu S_Y+\lambda\mathcal{M}_Y$ is positive.

Let $\lambda^\prime$ be the number such that $K_Y+\mu S_Y+\lambda\prime\mathcal{M}_Y\sim_{\mathbb{Q}} 0$. Note that $\lambda^\prime<\lambda$.
We have
$$
K_W+\mu\widetilde{S}_X+\lambda^\prime\mathcal{M}_W=f^*(K_X+\mu S_X+\lambda^\prime\mathcal{M}_X)+a^\prime\widetilde{S}_Y+\sum a_i^\prime E_i,
$$
and
$$
K_W+\mu\widetilde{S}_Y+\lambda^\prime\mathcal{M}_W=g^*(K_Y+\mu S_Y+\lambda^\prime\mathcal{M}_Y)+b\widetilde{S}_X+\sum b_iE_i.
$$
where $a^\prime\geqslant a$ and $a^\prime_i\geqslant a_i$.
We then get
$$
(\mu+b)\widetilde{S}_X-f^*(K_X+\mu S_X+\lambda^\prime\mathcal{M}_X)\sim_{\mathbb{Q}} (\mu+a^\prime)\widetilde{S}_Y+\sum (a_i^\prime-b_i)E_i.
$$
Since $\mu+b>0$ and $K_X+\mu S_X+\lambda^\prime\mathcal{M}_X$ is negative,  Negativity Lemma implies $\mu+a^\prime\leq 0$.
This is absurd. Therefore, $a+1<1-\mu$.
\end{proof}

\begin{corollary}
\label{corollary:NFI-1}
Let $\lambda$  be a non-negative  rational number such that
$$
K_X+S_X+\lambda\mathcal{M}_X\sim_{\mathbb{Q}} 0.
$$
Then the log discrepancy of $\widetilde{S}_Y$ with respect to the  pair $(X,S_X+\lambda\mathcal{M}_X)$  is less than~$0$.
\end{corollary}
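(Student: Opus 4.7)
The plan is to derive the corollary as the direct specialization $\mu = 1$ of Lemma~\ref{Lemma:NFI-1}. With $\mu = 1$, the hypothesis $K_X + \mu S_X + \lambda\mathcal{M}_X \sim_{\mathbb{Q}} 0$ of the lemma becomes the hypothesis of the corollary, and the lemma's conclusion that the log discrepancy of $\widetilde{S}_Y$ with respect to $(X, \mu S_X + \lambda \mathcal{M}_X)$ is less than $1 - \mu$ becomes exactly the statement that this log discrepancy is less than $0$ with respect to $(X, S_X + \lambda \mathcal{M}_X)$. Thus the entire task reduces to verifying that the hypotheses of Lemma~\ref{Lemma:NFI-1} are in force at $\mu = 1$.

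The only nontrivial point is to exhibit some positive rational $\epsilon$ for which $(Y, S_Y)$ is $\epsilon$-purely log terminal, so that the range condition $1 - \epsilon \leq \mu \leq 1$ accommodates $\mu = 1$. Since $Y \setminus S_Y$ is an affine Fano variety by assumption, Definition~\ref{definition:affine-Fano} ensures $(Y, S_Y)$ is purely log terminal. From there the upgrade to $\epsilon$-plt is standard: I would fix a log resolution $h \colon Z \to Y$ of $(Y, S_Y)$, on which the finitely many exceptional divisors $E_1, \ldots, E_n$ all have strictly positive log discrepancies; the relation
$$
K_Z + h_*^{-1} S_Y - \sum_{i=1}^n a(E_i; Y, S_Y)\, E_i = h^*(K_Y + S_Y)
$$
realises $(Y, S_Y)$ as the contraction of an snc pair on a smooth variety whose exceptional coefficients are all strictly less than $1$, and the routine discrepancy estimate on such snc pairs yields a uniform positive lower bound $\epsilon$ on the log discrepancies of every exceptional divisor on every resolution of $Y$.

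With such an $\epsilon$ in hand, Lemma~\ref{Lemma:NFI-1} is immediately applicable at $\mu = 1$ and gives the conclusion. There is no genuine obstacle at the level of the corollary itself; the main work is entirely absorbed into Lemma~\ref{Lemma:NFI-1}, and the only conceptual input beyond the lemma is the standard upgrade from plt to $\epsilon$-plt sketched above. The corollary is, in effect, the boundary-case specialization $\mu = 1$ where the coefficient of $S_X$ is maximal, and this is precisely the case relevant to Theorem~A.
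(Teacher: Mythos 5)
Your proposal is correct and follows exactly the route the paper intends: the corollary is the specialization $\mu=1$ of Lemma~\ref{Lemma:NFI-1}, whose conclusion then reads ``log discrepancy less than $0$.'' The only point the paper leaves implicit --- that the purely log terminal pair $(Y,S_Y)$ is automatically $\epsilon$-purely log terminal for some $\epsilon>0$, so that $\mu=1$ lies in the admissible range $1-\epsilon\leqslant\mu\leqslant 1$ --- is exactly the standard fact you supply via a fixed log resolution, so your write-up is a faithful (and slightly more complete) version of the paper's argument.
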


Now let $Y$ be a quasi-projective variety, let $S_Y$ be a prime divisor on the variety $Y$, and let  $\rho\colon Y\to B$
be a dominant  non-birational projective morphism with connected fibers such that
the complement $Y_\eta\setminus S_{Y_\eta}$ is an affine Fano variety over the function field of the variety $B$.
Here $Y_\eta$ is a generic fiber of $\rho$, and $S_{Y_\eta}=S_Y\vert_{Y_\eta}$.
We projectivize $Y$ and $B$ into projective varieties $\overline{Y}$ and $\overline{B}$ respectively.
We may assume that $\rho\colon Y\to B$ extends to a projective morphism $\bar{\rho}\colon\overline{Y}\to\overline{B}$.

Suppose that $\dim(B)>0$, and that there exists a birational map $\psi\colon\overline{Y}\dasharrow X$
that induces an embedding  $Y\setminus S_Y$ into $X\setminus S_X$.
As above, we consider the following resolution of indeterminacy of $\psi$
$$
\xymatrix{
&&V\ar@{->}[ld]_{p}\ar@{->}[rd]^{q}&&\\%
&X&&\overline{Y}\ar@{-->}[ll]_{\psi}\ar@{->}[d]^{\bar{\rho}}&\\
&&&\overline{B}&}
$$ %
where $p$ and $q$ are birational morphisms and $V$ is a smooth projective variety.
Let $\overline{S}_Y$ be the closure of $S_Y$ in $\overline{Y}$.
Let $\widetilde{S}_X$ and $\widetilde{S}_Y$ be the proper transforms of $S_X$ and $\overline{S}_Y$ by $p$ and $q$, respectively.
Then $\widetilde{S}_Y$ is $p$-exceptional, because $X$ has $\mathbb{Q}$-factorial singularities, and its Picard group is of rank $1$.
Moreover, since $\psi$ induces an embedding  $Y\setminus S_Y$ into $X\setminus S_X$,
either $q(\widetilde{S}_X)$ is contained in $\overline{S}_Y$, or $q(\widetilde{S}_X)$ is contained in $\overline{Y}\setminus Y$.
Thus, either the divisor $\widetilde{S}_X$ is $q$-exceptional, or $\widetilde{S}_X$ is mapped by $\bar{\rho}\circ q$
to a proper subvariety of $\overline{B}$.
Since $S_X$ is ample, the latter also implies that $p(\widetilde{S}_Y)$ is contained in $S_X$.

Let $H$ be a very ample Cartier divisor on $\overline{B}$.
Let $\mathcal{H}_{\overline{Y}}$ be the complete linear system $|\bar{\rho}^*(H)|$,
and let $\mathcal{H}_X$ be the proper transform of $\mathcal{H}_{\overline{Y}}$ on the variety $X$ by $\psi$.

\begin{lemma}[Log N\"other-Fano Inequality II]
\label{lemma:NFI-2}
Let $\mu$ and $\lambda$ be non-negative  rational numbers such that $\mu\leqslant 1$ and
$$
K_X+\mu S_X+\lambda\mathcal{H}_X\sim_{\mathbb{Q}} 0.
$$
Then the log discrepancy of $\widetilde{S}_Y$ with respect to $(X, \mu S_X+\lambda  \mathcal{H}_X)$  is less than $1-\mu$.
\end{lemma}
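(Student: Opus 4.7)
The plan is to mimic the structure of the proof of Lemma~\ref{Lemma:NFI-1}, but to produce the final contradiction after restricting to the generic fibre $Y_\eta$ of $\bar\rho$ rather than by arguing globally on $\overline{Y}$. Two structural facts make this possible. First, the linear system $\mathcal{H}_{\overline{Y}}=|\bar\rho^*H|$ restricts to zero on $Y_\eta$. Second, every divisorial component of $\overline{Y}\setminus Y$ is vertical over $\overline{B}$, because $\rho$ is already projective, so $Y_\eta$ coincides with the scheme-theoretic generic fibre of $\bar\rho$. Together these two observations will force the $\mathcal{H}_V$-term and every extraneous exceptional contribution to vanish upon restriction to $Y_\eta$, leaving only the affine Fano structure of $Y_\eta\setminus S_{Y_\eta}$.

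I would begin by supposing for contradiction that the log discrepancy of $\widetilde{S}_Y$ with respect to $(X,\mu S_X+\lambda\mathcal{H}_X)$ is at least $1-\mu$, i.e.\ that the discrepancy $a$ of $\widetilde{S}_Y$ satisfies $a+\mu\geqslant 0$. On the common resolution $V$, using that $\mathcal{H}_V\sim_{\mathbb{Q}} q^*\bar\rho^*H$ (since $|\bar\rho^*H|$ is base point free), one has the two log-pullback formulas
\begin{align*}
K_V+\mu\widetilde{S}_X+\lambda\mathcal{H}_V &\sim_{\mathbb{Q}} p^*(K_X+\mu S_X+\lambda\mathcal{H}_X)+a\widetilde{S}_Y+\sum_{i} a_i F_i,\\
K_V+\mu\widetilde{S}_Y+\lambda\mathcal{H}_V &\sim_{\mathbb{Q}} q^*(K_{\overline{Y}}+\mu\overline{S}_Y+\lambda\bar\rho^*H)+\sum_{j} b_j G_j,
\end{align*}
where $F_i$ ranges over the $p$-exceptional prime divisors distinct from $\widetilde{S}_Y$ and $G_j$ over the $q$-exceptional prime divisors (with $\widetilde{S}_X$ among the $G_j$ when $\widetilde{S}_X$ is $q$-exceptional). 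Using $K_X+\mu S_X+\lambda\mathcal{H}_X\sim_{\mathbb{Q}}0$ and subtracting yields
\[
q^*\bigl(K_{\overline{Y}}+\mu\overline{S}_Y+\lambda\bar\rho^*H\bigr) \sim_{\mathbb{Q}} (\mu+a)\widetilde{S}_Y-\mu\widetilde{S}_X+\sum_{i} a_i F_i-\sum_{j} b_j G_j.
\]

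I would then apply $q_*$ and restrict to the generic fibre $Y_\eta$. The crucial claim is that every term on the right other than the one involving $\widetilde{S}_Y$ dies under this operation. Indeed, each $q_*G_j$ is zero by $q$-exceptionality. For an $F_i\neq\widetilde{S}_Y$ that is not $q$-exceptional, $q(F_i)$ is a prime divisor on $\overline{Y}$ which is $\psi$-exceptional (because $p(F_i)$ has codimension $\geqslant 2$ in $X$); the open embedding $Y\setminus S_Y\hookrightarrow X\setminus S_X$ prevents $\psi$ from contracting any divisor meeting $Y\setminus S_Y$, so $q(F_i)$ is either $\overline{S}_Y$ (forcing $F_i=\widetilde{S}_Y$, contrary to assumption) or contained in $\overline{Y}\setminus Y$, hence vertical and annihilated by restriction to $Y_\eta$. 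The same reasoning handles $\widetilde{S}_X$ when it is not $q$-exceptional, in which case $q(\widetilde{S}_X)\subseteq\overline{Y}\setminus Y$ is vertical. Since $\bar\rho^*H|_{Y_\eta}=0$ as well, restriction gives
\[
K_{Y_\eta}+S_{Y_\eta}\sim_{\mathbb{Q}}(a+1)S_{Y_\eta}.
\]
As $Y_\eta\setminus S_{Y_\eta}$ is an affine Fano variety over the function field of $\overline{B}$, both $-(K_{Y_\eta}+S_{Y_\eta})$ and $S_{Y_\eta}$ are ample, forcing $a+1<0$; this contradicts $a\geqslant-\mu\geqslant-1$. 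I expect the main obstacle to be the verification that every $q_*F_i$ is supported on $\overline{Y}\setminus Y$: this relies on the structural restriction that $\psi$-exceptional divisors on $\overline{Y}$ must lie in $\overline{S}_Y\cup(\overline{Y}\setminus Y)$ combined with the verticality of $\overline{Y}\setminus Y$, and both the case when $\widetilde{S}_X$ is $q$-exceptional and the case when it is not must be handled uniformly.
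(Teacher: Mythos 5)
Your argument is correct and follows essentially the same route as the paper's proof: the same pair of log-pullback formulas on the common resolution, subtraction using $K_X+\mu S_X+\lambda\mathcal{H}_X\sim_{\mathbb{Q}}0$, pushforward by $q$ and restriction to the generic fibre $Y_\eta$ where the $q$-exceptional, vertical, and $\bar\rho^*H$ terms all vanish, and finally the ampleness of $-(K_{Y_\eta}+S_{Y_\eta})$ and $S_{Y_\eta}$. The only cosmetic differences are that you phrase it as a contradiction (the hypothesis $a+\mu\geqslant 0$ is never actually used in the derivation) and you spell out more explicitly than the paper why the non-$q$-exceptional $p$-exceptional divisors and $\widetilde{S}_X$ are vertical over $\overline{B}$.
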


\begin{proof}
As in the proof of Lemma~\ref{Lemma:NFI-1}, we have
$$
K_V+\mu\widetilde{S}_X+\lambda\mathcal{H}_V\sim_{\mathbb{Q}} p^*(K_X+\mu S_X+\lambda\mathcal{H}_X)+a\widetilde{S}_Y+\sum a_iE_i,
$$
and
$$
K_V+\mu\widetilde{S}_Y+\lambda\mathcal{H}_V\sim_{\mathbb{Q}} q^*(K_{\overline{Y}}+ \mu \overline{S}_Y+\lambda\mathcal{H}_{\overline{Y}})+b\widetilde{S}_X+\sum b_jF_j,
$$
where each $E_i$ is $p$-exceptional, and each $F_j$ is $q$-exceptional.
Then
$$
(a+\mu)\widetilde{S}_Y\sim_{\mathbb{Q}} q^*(K_{\overline{Y}}+ \mu\overline{S}_Y+\lambda\mathcal{H}_{\overline{Y}})+(b+\mu)\widetilde{S}_X+\sum b_j F_j-\sum a_i E_i.
$$
Thus, we have
$$
(a+\mu)\overline{S}_Y\sim_{\mathbb{Q}} K_{\overline{Y}}+ \mu\overline{S}_Y+\lambda\mathcal{H}_{\overline{Y}}+(b+\mu)q(\widetilde{S}_X)-\sum a_i q(E_i).
$$
On the other hand, each $p$-exceptional divisor $E_i$ is either $q$-exceptional, or $E_i$ is mapped by $\bar{\rho}\circ q$ to a proper subvariety of $\overline{B}$.
This shows that
$$
-(a+\mu)S_{Y_\eta}\sim_{\mathbb{Q}} -(K_{Y_\eta}+\mu S_{Y_\eta})=-(K_{Y_\eta}+S_{Y_\eta})+\left(1-\mu\right)S_{Y_\eta}.
$$
 Since $-(K_{Y_\eta}+S_{Y_\eta})$ and $S_{Y_\eta}$  are ample, we obtain $a+\mu<0$. Therefore, the log discrepancy of $\widetilde{S}_Y$ with respect to $(X,\mu S_X+\lambda\mathcal{H}_X)$ is less than $1-\mu$.
\end{proof}

\begin{corollary}
\label{corollary:NFI-2}
Let $\lambda$ be a non-negative rational number such that
$$
K_X+S_X+\lambda\mathcal{H}_X\sim_{\mathbb{Q}} 0.
$$
Then the log discrepancy of $\widetilde{S}_Y$ with respect to the  pair $(X,S_X+\lambda\mathcal{H}_X)$  is less than~$0$.
\end{corollary}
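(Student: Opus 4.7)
My plan is to obtain Corollary~\ref{corollary:NFI-2} directly from Lemma~\ref{lemma:NFI-2} by specialising to the boundary value $\mu = 1$. The assumption of the corollary, $K_X + S_X + \lambda \mathcal{H}_X \sim_{\mathbb{Q}} 0$, is exactly the assumption $K_X + \mu S_X + \lambda \mathcal{H}_X \sim_{\mathbb{Q}} 0$ of the lemma with $\mu = 1$, and this value is permissible since the lemma only requires $0 \leq \mu \leq 1$. Feeding $\mu = 1$ into the conclusion of the lemma then yields that the log discrepancy of $\widetilde{S}_Y$ with respect to $(X, S_X + \lambda \mathcal{H}_X)$ is strictly less than $1 - \mu = 0$, which is precisely the claim.

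Since this is a pure specialisation of an already established lemma, no new argument or computation is needed, and there is no genuine obstacle to address. The only thing worth verifying is that the definitions of the various objects (the resolution diagram, the proper transforms $\widetilde{S}_X$ and $\widetilde{S}_Y$, and the linear system $\mathcal{H}_X$) are the same as those fixed in the setup preceding Lemma~\ref{lemma:NFI-2}, which is clear from context.

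The reason to record the statement separately is that it packages the lemma in the form that will be used later: whenever $\psi\colon \overline{Y} \dasharrow X$ realises a proper relative affine Fano variety $Y \setminus S_Y$ as an open subset of $X \setminus S_X$, the divisor $\widetilde{S}_Y$ coming from the resolution is automatically a non-log-canonical place of the log pair $(X, S_X + \lambda \mathcal{H}_X)$, whose boundary $S_X + \lambda \mathcal{H}_X$ is $\mathbb{Q}$-linearly equivalent to $-K_X$. In combination with Theorem~B (via inversion of adjunction to $S_X$ and the $\alpha$-invariant of the different pair $(S_X, \mathrm{Diff}_{S_X}(0))$), this non-log-canonical failure is exactly what gets obstructed under the hypothesis $\alpha(S_X, \mathrm{Diff}_{S_X}(0)) \geq 1$, ruling out such embeddings and establishing condition~(2) of Definition~\ref{definition:rigid}.
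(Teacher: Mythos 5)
Your proof is correct and matches the paper's intent exactly: the corollary is stated without a separate proof precisely because it is the specialisation $\mu=1$ of Lemma~\ref{lemma:NFI-2}, which is admissible since the lemma only requires $0\leqslant\mu\leqslant 1$. Nothing further is needed.
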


\begin{theorem}[Theorem~A]
\label{theorem:NF-inequality} Let $X\setminus S_X$ be an affine Fano variety with completion $X$ and boundary $S_X$.
If for any mobile linear system $\mathcal{M}_X$ on $X$ and a positive rational number $\lambda$ such that
$$
K_X+S_X+\lambda\mathcal{M}_X\sim_{\mathbb{Q}} 0,
$$
the pair $(X,S_X+\lambda \mathcal{M}_X)$ is log canonical along $S_X$, then $X\setminus S_X$ is super-rigid.
\end{theorem}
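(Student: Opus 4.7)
The plan is to verify the two conditions of Definition~\ref{definition:rigid} separately, each time deriving a contradiction from one of the log N\"other-Fano inequalities already proved. The strategy is a direct analogue of the classical derivation of birational super-rigidity from the N\"other-Fano inequality, with Corollaries~\ref{corollary:NFI-1} and~\ref{corollary:NFI-2} playing the role of the two nonvanishing discrepancy estimates.

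For condition (1), suppose $\phi\colon X\setminus S_X\to X^\prime\setminus S^\prime_{X^\prime}$ is an isomorphism onto another affine Fano variety, and view it as a birational map $\phi\colon X\dashrightarrow X^\prime$. The first step is to show that $\phi$ is an isomorphism in codimension one. Assuming the contrary puts us exactly in the setup of Lemma~\ref{Lemma:NFI-1}: take a very ample complete linear system $\mathcal{M}_{X^\prime}$ on $X^\prime$, let $\mathcal{M}_X$ be its proper transform on $X$ (a mobile linear system with nonzero divisor class), and use Picard rank one of $X$ together with ampleness of $-(K_X+S_X)$ to produce the positive rational $\lambda$ with $K_X+S_X+\lambda\mathcal{M}_X\sim_{\mathbb{Q}}0$. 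Corollary~\ref{corollary:NFI-1} then gives that the log discrepancy of $\widetilde{S}_{X^\prime}$ with respect to $(X,S_X+\lambda\mathcal{M}_X)$ is negative. Since the setup preceding Lemma~\ref{Lemma:NFI-1} already records that $f(\widetilde{S}_{X^\prime})\subset S_X$, the pair fails to be log canonical along $S_X$, contradicting the hypothesis. Hence $\phi$ is an isomorphism in codimension one, and a standard Picard rank one argument using the Negativity Lemma on a common resolution extends it to a global isomorphism $X\cong X^\prime$ whose restriction to $X\setminus S_X$ is $\phi$; this isomorphism necessarily maps $S_X$ onto $S^\prime_{X^\prime}$.

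For condition (2), suppose for contradiction that $X\setminus S_X$ contains a relative affine Fano subvariety $Y\setminus S_Y$ of the same dimension over some base $B$ with $\dim B>0$. Projectivizing places us in the setup of Lemma~\ref{lemma:NFI-2}. Pick a very ample divisor $H$ on $\overline{B}$, so that $\mathcal{H}_{\overline{Y}}=|\bar\rho^*H|$ is nontrivial and base-point-free, and let $\mathcal{H}_X$ be its proper transform under $\psi$, which is mobile on $X$. Picard rank one and ampleness of $-(K_X+S_X)$ again produce a positive rational $\lambda$ with $K_X+S_X+\lambda\mathcal{H}_X\sim_{\mathbb{Q}}0$. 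Corollary~\ref{corollary:NFI-2} then yields that $\widetilde{S}_Y$ has negative log discrepancy with respect to $(X,S_X+\lambda\mathcal{H}_X)$, while the discussion preceding Lemma~\ref{lemma:NFI-2} ensures that its center $p(\widetilde{S}_Y)$ on $X$ lies in $S_X$. This again contradicts log canonicity along $S_X$. Consequently no such $Y\setminus S_Y$ exists, and in particular $X\setminus S_X$ contains no open $\mathbb{A}^1$-cylinder.

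The conceptual heart of the argument is already packaged inside the two log N\"other-Fano inequalities, so the remaining obstacles are essentially auxiliary verifications. The most delicate one is the promotion of a codimension-one isomorphism to a genuine isomorphism in part~(1), which relies crucially on $\mathbb{Q}$-factoriality together with Picard rank one to rule out small modifications (the Mori cone being a single ray). A secondary subtlety is ensuring that in the setup of Lemma~\ref{lemma:NFI-2} the image $p(\widetilde{S}_Y)$ really lies in $S_X$ in every case, so that the hypothesis ``log canonical \emph{along} $S_X$'' (rather than log canonical globally) is strong enough to yield the contradiction.
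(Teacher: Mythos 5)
Your argument is correct and follows exactly the route the paper takes: the paper's own proof of Theorem~A is the one-line observation that the statement ``immediately follows from Corollaries~\ref{corollary:NFI-1} and~\ref{corollary:NFI-2}'', and your write-up simply makes explicit the details that the authors leave implicit (mobility of the proper transforms, existence of a positive $\lambda$ via Picard rank one, the location of the centre of $\widetilde{S}_Y$ inside $S_X$, and the promotion of a codimension-one isomorphism to a genuine one). Nothing in your proposal deviates from or adds to the paper's method in a substantive way.
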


\begin{proof}
This immediately follows from Corollaries~\ref{corollary:NFI-1} and \ref{corollary:NFI-2}.
\end{proof}

As a application of Theorem~A, we obtain the following positive solution
to Conjecture \ref{conjecture:cubics} for smooth cubic surfaces without rational points.

\begin{corollary}
\label{corollary:cubics}
Let $S$ be a smooth cubic surface in $\mathbb{P}^3_{\Bbbk}$ without $\Bbbk$-rational points.
Then $\mathbb{P}^3_{\Bbbk}\setminus S$ is a super-rigid affine Fano variety.
\end{corollary}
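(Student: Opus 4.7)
The plan is to apply Theorem~A. Writing $\mathcal{M}\subset|dH|$, the relation $K_{\mathbb{P}^3}+S+\lambda\mathcal{M}\sim_{\mathbb{Q}}0$ forces $\lambda=1/d$. Since $\mathbb{P}^3_{\Bbbk}$ and $S$ are both smooth, the different vanishes, and inversion of adjunction reduces log canonicity of $(\mathbb{P}^3,S+\tfrac{1}{d}\mathcal{M})$ along $S$ to log canonicity of the surface pair $(S,\tfrac{1}{d}\mathcal{M}|_S)$. Suppose for contradiction that this fails at a geometric point $\overline{P}\in S_{\overline{\Bbbk}}$.

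Decomposing $\mathcal{M}|_S=F+M$ into fixed and mobile parts, I would first observe that the coefficient of any prime component $C$ of $F$ equals the order of vanishing of a general member of $\mathcal{M}$ along the codimension-two curve $C\subset\mathbb{P}^3$; a nonzero degree-$d$ polynomial cannot vanish to order exceeding $d$ along an irreducible curve, so this coefficient is at most $d$. Hence $\tfrac{1}{d}F$ has coefficients bounded by $1$ and the non-log-canonical locus of $(S,\tfrac{1}{d}\mathcal{M}|_S)$ is zero-dimensional. This locus is Galois-invariant and $\Bbbk$-defined, so a $\Bbbk$-rational point in it would produce a $\Bbbk$-point of $S$. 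Consequently the Galois orbit $\{\overline{P}_1,\dots,\overline{P}_r\}$ of $\overline{P}$ has $r\ge 2$ members, all non-log-canonical, and $m_F:=\operatorname{mult}_{\overline{P}_i}F$ is independent of $i$ by Galois invariance.

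At each $\overline{P}_i$ a Pukhlikov--Corti-style inequality, obtained by blowing up, forces $\operatorname{mult}_{\overline{P}_i}M>2d-m_F$, so for two general members $M_1,M_2$ of $M$ one has $(M_1\cdot M_2)_{\overline{P}_i}\ge(\operatorname{mult}_{\overline{P}_i}M)^2>(2d-m_F)^2$, and summing over the orbit gives $r(2d-m_F)^2<M^2$. The Hodge index theorem on the cubic surface yields
\[
M^2=(-dK_S-F)^2=3d^2-2df+F^2\le\frac{(3d-f)^2}{3},\qquad f:=F\cdot(-K_S),
\]
and picking $C\in|{-}K_S|$ through all $\overline{P}_i$ (possible since $\dim|{-}K_S|=3$, with a pluri-anticanonical system used for larger orbits) gives $F\cdot C=f$ and hence $rm_F\le f$. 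Substituting $m_F\le f/r$,
\[
r(2d-m_F)^2\ge r\Bigl(2d-\tfrac{f}{r}\Bigr)^2=\frac{(2rd-f)^2}{r}\ge\frac{(3d-f)^2}{3},
\]
the final inequality being an elementary estimate which for $r=2$ amounts to positivity of $30d^2-12df+f^2$ on $[0,3d]$. Combined with $r(2d-m_F)^2<M^2\le(3d-f)^2/3$, this gives a contradiction.

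The main obstacle will be the precise form of the Pukhlikov--Corti inequality for a surface pair whose boundary has both a fixed and a mobile component, particularly when non-log-canonicity is detected only at higher blow-ups of $\overline{P}_i$ rather than the first, together with the handling of Galois orbits larger than $\dim|{-}K_S|+1=4$, where the auxiliary curve must be chosen in a higher pluri-anticanonical system and the elementary numerical inequality correspondingly re-verified.
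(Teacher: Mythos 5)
Your proposal follows the paper's route up to the key reduction: both apply Theorem~A, restrict to $S$, use inversion of adjunction, base change to $\overline{\Bbbk}$, and aim to show that the non-log-canonical locus of $(S_{\overline{\Bbbk}},D_{\overline{\Bbbk}})$ with $D\sim_{\mathbb{Q}}-K_S$ cannot be a Galois orbit of length $r\geqslant 2$. At exactly this point the paper stops and invokes \cite[Lemma~3.7]{Ch07a}, which asserts that such a pair is log canonical outside of a single point; that point is then Galois-invariant, hence $\Bbbk$-rational, a contradiction. You instead attempt to reprove this fact by multiplicity estimates, and this is where there is a genuine gap.

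The step that fails is the inequality $\mathrm{mult}_{\overline{P}_i}M>2d-m_F$. Non-log-canonicity of $(S,\tfrac{1}{d}(F+M))$ at $\overline{P}_i$ only forces $\mathrm{mult}_{\overline{P}_i}(F+M)>d$, not $>2d$: the factor $2$ disappears when non-log-canonicity is detected only at an infinitely near point (compare the cusp $y^2=x^3$ with coefficient $1$, which is not log canonical but has multiplicity exactly $2$, not $>2$). The correct tool is Corti's inequality, which bounds the local intersection number directly: when $\mathrm{mult}_{\overline{P}_i}(\tfrac{1}{d}F)\leqslant 1$ it gives $(M_1\cdot M_2)_{\overline{P}_i}>4d(d-m_F)$, which is weaker than your $(2d-m_F)^2=4d(d-m_F)+m_F^2$ by precisely $m_F^2$. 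With the correct bound your final numerics no longer close: for $r=2$ and $m_F=f/2$ one would need $12d(2d-f)\geqslant (3d-f)^2$, which fails for $f$ close to $3d$ (and is vacuous at $f=2d$, where Corti's inequality says nothing about the mobile part and the fixed part must be analyzed separately through the configuration of lines and conics on $S$). This residual case analysis is essentially the content of the cited lemma, so it cannot be waved away; you correctly anticipated this as the ``main obstacle,'' but it is fatal to the argument as written rather than a technicality. A secondary, fixable point: your bound on the coefficients of $F$ is justified by the order of vanishing of a degree-$d$ polynomial along a curve in $\mathbb{P}^3$, but the relevant quantity is $\mathrm{ord}_C(G|_S)$, which can exceed $\mathrm{mult}_C(G)$ when $G$ is tangent to $S$ along $C$; the correct justification is that the mobile part satisfies $M^2\geqslant 0$, which together with the Hodge index theorem (or the classification of negative curves on a cubic surface) bounds the coefficients of $\tfrac{1}{d}F$ by $1$.
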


\begin{proof}
Indeed, otherwise Theorem~A implies that there exists a mobile linear system $\mathcal{M}$ on $\mathbb{P}^3$,
say of degree $m>2$ such that the singularities of the  pair
$$
\left(\mathbb{P}^3, S+\frac{1}{m}\mathcal{M}\right)
$$
are not log canonical along $S$. Write $D=\frac{1}{m}\mathcal{M}|_S$.
Then the pair $(S,D)$ is not log canonical by Inversion of adjunction,
and $D$ is an effective $\mathbb{Q}$-divisor on $S$ such that $D\sim_{\mathbb{Q}} -K_S$.

Let $S_{\overline{\Bbbk}}$ be the surface obtained from $S$ by base change to an algebraic closure $\overline{\Bbbk}$ of $\Bbbk$.
Similarly, let $D_{\overline{\Bbbk}}$ be the $\mathbb{Q}$-divisor  obtained from $D$ by the same base change.
Then $D_{\overline{\Bbbk}}$ is an effective $\mathbb{Q}$-divisor $D_{\overline{\Bbbk}}$ on $S_{\overline{\Bbbk}}$ such that
$$
D_{\overline{\Bbbk}}\sim_{\mathbb{Q}} -K_{S_{\overline{\Bbbk}}}.
$$
Moreover, the pair $(S_{\overline{\Bbbk}},D_{\overline{\Bbbk}})$ is not log canonical at some point $P\in S_{\overline{\Bbbk}}$.
Furthermore, it follows from \cite[Lemma~3.7]{Ch07a} that
this  pair is log canonical outside of the point $P$.
Thus, the point $P$ must be invariant under the action of the Galois group $\mathrm{Gal}(\overline{\Bbbk}/\Bbbk)$ on $S_{\overline{\Bbbk}}$,
hence corresponds to a $\Bbbk$-rational point of $S$, a contradiction.
\end{proof}

\begin{example}
\label{example:cubics}
Let $X$ be the cubic hypersurface in $\mathbb{P}^4_{\mathbb{Q}}$ that is given by
$$
5x^3+9y^3+10z^3+12w^3+uf_2(x,y,z,w,u)=0,
$$
where $f_2$ is a homogeneous polynomial of degree $2$.
Let $S_X$ be its hyperplane section that is given by $u=0$.
Then $X$ has at most canonical singularities, the surface $S_X$ is smooth,
and $X\setminus S_X$ is an affine cubic hypersurface in $\mathbb{A}^4_{\mathbb{Q}}$.
Moreover, Cassels and Guy proved in \cite{CG66} that the surface $S_X$ violates the Hasse principle.
In particular, it does not contain any $\mathbb{Q}$-rational points.
Then $X\setminus S_X$ is super-rigid by Corollary~\ref{corollary:cubics}.
\end{example}

\begin{theorem}[Theorem~B]
\label{theorem:criterion}
Let $X\setminus S_X$ be an affine Fano variety with completion $X$ and boundary~$S_X$.
If $\alpha(S_X,\mathrm{Diff}_{S_X}(0))\geqslant 1$, then the affine Fano variety $X\setminus S_X$ is super-rigid.
\end{theorem}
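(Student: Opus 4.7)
The plan is to deduce Theorem~B directly from Theorem~A by restricting any potentially bad mobile linear system to the boundary $S_X$ and invoking Inversion of Adjunction together with the $\alpha$-invariant hypothesis.

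I would argue by contradiction: suppose the affine Fano variety $X\setminus S_X$ is not super-rigid. Then, by the contrapositive of Theorem~A, there exist a mobile linear system $\mathcal{M}_X$ on $X$ and a positive rational number $\lambda$ with $K_X+S_X+\lambda\mathcal{M}_X\sim_{\mathbb{Q}} 0$ such that the pair $(X,S_X+\lambda\mathcal{M}_X)$ fails to be log canonical at some point of $S_X$.

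Next, I would take a general member $M$ of $\mathcal{M}_X$. Since $\mathcal{M}_X$ is mobile, $S_X$ is not a component of $M$, so the restriction $D:=\lambda M|_{S_X}$ is a well-defined effective $\mathbb{Q}$-divisor on $S_X$. The adjunction formula, combined with $\lambda M\sim_{\mathbb{Q}} -(K_X+S_X)$, gives
$$
D\sim_{\mathbb{Q}} -(K_X+S_X)|_{S_X}\sim_{\mathbb{Q}} -(K_{S_X}+\mathrm{Diff}_{S_X}(0)).
$$
By the hypothesis $\alpha(S_X,\mathrm{Diff}_{S_X}(0))\geqslant 1$, the pair $(S_X,\mathrm{Diff}_{S_X}(0)+D)$ is then log canonical. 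Invoking Inversion of Adjunction of Koll\'ar--Shokurov (applicable because $(X,S_X)$ is purely log terminal with $S_X$ normal), one concludes that $(X,S_X+\lambda M)$ is log canonical in a Zariski neighbourhood of $S_X$. Since this holds for a general $M$, the same is true of $(X,S_X+\lambda\mathcal{M}_X)$ along $S_X$, contradicting the failure of log canonicity above.

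The main obstacle is to make sure that Inversion of Adjunction is applied correctly in this singular setting. The boundary $S_X$ is normal but may carry a nontrivial different, and one must verify that the different of $S_X+\lambda M$ along $S_X$ is exactly $\mathrm{Diff}_{S_X}(0)+\lambda M|_{S_X}$, which rests on $\lambda M$ being $\mathbb{Q}$-Cartier and on its support meeting $S_X$ properly for a general $M\in\mathcal{M}_X$. One also needs the fact that log canonicity for a general member of a mobile system is equivalent to log canonicity of the pair with the system as a coefficient. Once these two identifications are in place, the rest follows essentially formally from the machinery already set up in Section~\ref{section:alpha}.
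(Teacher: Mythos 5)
Your proposal is correct and follows essentially the same route as the paper: the paper's proof is exactly the contrapositive of Theorem~A combined with Inversion of Adjunction and the hypothesis $\alpha(S_X,\mathrm{Diff}_{S_X}(0))\geqslant 1$, stated there in two sentences. Your write-up merely fills in the details the paper leaves implicit (restricting a general member of $\mathcal{M}_X$ to $S_X$, identifying the restriction with an anticanonical $\mathbb{Q}$-divisor of the log Fano pair $(S_X,\mathrm{Diff}_{S_X}(0))$, and checking the hypotheses of Koll\'ar--Shokurov adjunction), all of which are sound.
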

\begin{proof}
Suppose that $X\setminus S_X$ is not super-rigid.
Then Theorem~A below implies that the variety $X$ must carry a mobile linear system $\mathcal{M}_X$ such that
the singularities of the pair $(X, S_X+\lambda \mathcal{M}_X)$ are not log canonical along $S_X$ for  a positive rational number $\lambda$ such that
$$
K_X+S_X+\lambda \mathcal{M}_X\sim_{\mathbb{Q}} 0.
$$
However the condition $\alpha(S_X,\mathrm{Diff}_{S_X}(0))\geqslant 1$ and Inversion of adjunction (see \cite[Theorem~7.5]{Ko97}) immediately show that this is impossible.
\end{proof}

\medskip

Using Theorem~B we are able to present many  examples of super-rigid affine Fano varieties $X\setminus S_X$
with a completion $X$ and a boundary $S_X$ such that $X$ is a weighted projective space, and $S_X$ is a hypersurface in $X$ that is well-formed and quasismooth (see \cite{Fletcher}).
To be precise, let $X$ be a well-formed weighted projective spaces $\mathbb{P}(a_0,a_1,\ldots,a_n)$,
where  $a_0,a_1,\ldots,a_n$ are positive integers such that $a_0\leqslant a_1\leqslant\cdots\leqslant a_n$.
Let $S_X$ be a quasi-smooth well-formed hypersurface in $X$ of degree
$$
d<\sum_{i=0}^{n}a_i.
$$
Then $X\setminus S_X$ is an affine Fano variety of dimension $n$, and $\mathrm{Diff}_{X}(0)=0$.

The example below is motivated by Conjecture~\ref{conjecture:cubics}.

\begin{example}
\label{example:hypersurface}
Suppose that $X=\mathbb{P}^{n}$ and $d=n\geqslant 4$. Then
$$
\mathrm{Aut}\left(X\setminus S_X\right)=\mathrm{Aut}\left(X,S_X\right),
$$
because  $S_X$ is not ruled (see \cite{IM71,Pukhlikov1998,Fernex}).
We do not know whether the affine Fano variety $X\setminus S_X$ is super-rigid or not.
However, if $n\geqslant 6$ and $S_X$ is a general hypersurface of degree~$n$, then $\alpha(S_X)=1$ by \cite{Pukhlikov2005},
so that $X\setminus S_X$ is super-rigid by Theorem~B.
\end{example}

Smooth cubic surfaces in $\mathbb{P}^3$, smooth quartic surfaces in $\mathbb{P}(1,1,1,2)$,
and smooth sextic surfaces in $\mathbb{P}(1,1,2,3)$ form three special families
of a much larger class of quasismooth well-formed two-dimensional del Pezzo hypersurfaces in three-dimensional weighted projective spaces.
They provide a lot of examples of super-rigid affine Fano threefolds.

\begin{example}
\label{example:exceptional}
Suppose that $n=3$, so that $X\setminus S_X$ is an affine Fano threefold.
Then it follows from \cite{CPS10,CS13} that $\alpha(S_X)>1$ if and only if
$(a_{0},a_{1},a_{2},a_{3},d)$ is one of the following the quintuples:
\begin{longtable}{llll}
$(2,3,5,9,18)$, &$(3,3,5,5,15)$, &$(3,5,7,11,25)$, &$(3,5,7,14, 28)$,\\%
$(3,5,11,18, 36)$, &$(5,14,17,21,56)$, &$(5,19,27,31,81)$, &$(5,19,27,50,100)$,\\
$(7,11,27,37,81)$, &$(7,11,27,44,88)$, &$(9,15,17,20,60)$, &$(9,15,23,23,69)$,\\
$(11,29,39,49,127)$, &$(11,49,69,128,256)$, &$(13,23,35,57,127)$, & $(13,35,81,128,256)$, \\
$(3,4,5,10,20)$, &$(3,4,10,15,30)$, &$(5,13,19,22,57)$, &$(5,13,19,35,70)$, \\
$(6,9,10,13,36)$, &$(7,8,19,25,57)$, &$(7,8,19,32,64)$, &$(9,12,13,16,48)$, \\
$(9,12,19,19,57)$, &$(9,19,24,31,81)$, &$(10,19,35,43,105)$,& $(11,21,28,47,105)$, \\
$(11,25,32,41,107)$, &$(11,25,34,43,111)$, &$(11,43,61,113,226)$, &$(13,18,45,61,135)$, \\
$(13,20,29,47,107)$, &$(13,20,31,49,111)$, &$(13,31,71,113,226)$, &$(14,17,29,41,99)$,\\
$(5,7,11,13,33)$, &$(5,7,11,20,40)$, &$(11,21,29,37,95)$, &$(11,37,53,98,196)$, \\
$(13,17,27,41,95)$, &$(13,27,61,98,196)$, &$(15,19,43,74,148)$, &$(9,11,12,17,45)$, \\
$(10,13,25,31,75)$, &$(11,17,20,27,71)$, &$(11,17,24,31,79)$, &$(11,31,45,83,166)$, \\
$(13,14,19,29,71)$, &$(13,14,23,33,79)$, &$(13,23,51,83,166)$, &$(11,13,19,25,63)$,\\
$(11,25,37,68,136)$, &$(13,19,41,68,136)$, &$(11,19,29,53,106)$, &$(13,15,31,53,106)$,\\
$(11,13,21,38,76).$&&&\\
\end{longtable}
In all these cases, the affine Fano variety $X\setminus S_X$ is super-rigid by Theorem~B.
\end{example}

\begin{example}
\label{example:weakly-exceptional}
With the notation and assumptions of Example~\ref{example:exceptional},
one has $\alpha(S_X)=1$ if $(a_{0},a_{1},a_{2},a_{3},d)$ is one of the following the quintuples:
\begin{longtable}{llll}
$(1,3,5,8,16)$, &$(2,3,4,7,14)$, &
$(5,6,8,9,24)$, &$(5,6,8,15,30)$,\\
\multicolumn{2}{l}{$(2,2n+1,2n+1,4n+1,8n+4),$}&  \multicolumn{2}{l}{$(3,3n,3n+1,3n+1, 9n+3)$,} \\
\multicolumn{2}{l}{$(3, 3n+1, 3n+2, 3n+2, 9n+6)$,}& \multicolumn{2}{l}{$(3,3n+1,3n+2,6n+1, 12n+5)$,} \\
\multicolumn{2}{l}{$(3,3n+1, 6n+1, 9n, 18n+3)$,} & \multicolumn{2}{l}{$(3, 3n+1, 6n+1, 9n+3, 18n+6)$,} \\
\multicolumn{2}{l}{$(4, 2n+1, 4n+2, 6n+1, 12n+6)$,} & \multicolumn{2}{l}{$(4, 2n+3, 2n+3, 4n+4, 8n+12)$,} \\
\multicolumn{2}{l}{$(6, 6n+3, 6n+5, 6n+5, 18n+15)$,} & \multicolumn{2}{l}{$(6, 6n+5, 12n+8, 18n+9, 36n+24)$,} \\
\multicolumn{2}{l}{$(6, 6n+5, 12n+8, 18n+15, 36n+30)$,} & \multicolumn{2}{l}{$(8, 4n+5, 4n+7, 4n+9, 12n+23)$,} \\
\multicolumn{2}{l}{$(9, 3n+8, 3n+11, 6n+13, 12n+35)$,} & \\
\end{longtable}
\noindent where $n$ is any positive integer. Moreover, we also have $\alpha(S_X)=1$ if $S_X$ is a general hypersurface of degree $d$ in $\mathbb{P}(a_0,a_1,a_2,a_3)$  and $(a_{0},a_{1},a_{2},a_{3},d)$ is one of the following quintuples: $(1,1,2,3,6)$, $(1,2,3,5,10)$, $(1,3,5,7,15)$, $(2,3,4,5,12)$.
In all these cases, the affine Fano variety $X\setminus S_X$ is super-rigid by Theorem~B.
\end{example}

Smooth quartic threefolds in $\mathbb{P}^4$ form one family among the famous $95$ families of Reid and Iano-Fletcher discovered in \cite{Fletcher}.
They also provide many examples of super-rigid affine Fano fourfolds through Theorem~B.

\begin{example}
\label{example:95}
For the case where $n=4$, $d=\sum_{i=0}^{4}a_i-1$,  Iano-Fletcher verified that there are exactly $95$  quintuples $(a_0, a_1, a_2, a_3, a_4)$  that define the hypersurface $S_X$ with only terminal singularities and listed such quintuples in  \cite{Fletcher}.
Moreover, it follows from \cite{CPR,CP17} that
$$
\mathrm{Aut}\left(X\setminus S_X\right)=\mathrm{Aut}\left(X, S_X\right).
$$
Furthermore, if $-K_{S_X}^3\leqslant 1$ and the hypersurface $S_X$ is general,
then $\alpha(S_X)=1$ by \cite{Ch08,Ch09},
so that the corresponding affine Fano fourfold $X\setminus S_X$ is super-rigid by Theorem~B.
\end{example}

Meanwhile, Johnson and Koll\'ar completely described  the quintuples $(a_0,a_1,a_2,a_3,a_4)$ that define quasi-smooth hypersurface $S_X$ of degree  $\sum_{i=0}^{4}a_i-1$ in $\mathbb{P}(a_0,a_1,a_2,a_3,a_4)$ in \cite{JohnsonKollar}.
They also show that $\alpha(S_X)\geqslant 1$ in many these cases. In such cases, the corresponding affine Fano variety $X\setminus S_X$
is super-rigid by Theorem~B.


\section{Global finite quotients of affine spaces}
\label{section:quotients}
As seen in the previous section, it is hardly expected that the $\alpha$-invariant plays a role of a criterion (a sufficient and necessary condition) for an affine Fano variety to be super-rigid. However, it is able to serve as a criterion for affine Fano varieties of  a certain type. In this section, we study affine Fano varieties that are quotients of $\mathbb{A}^n$ by actions of finite subgroups of $\mathrm{GL}_n(\mathbb{C})$. We verify that the $\alpha$-invariant completely determines super-rigidity of such affine Fano varieties of dimensions up to $4$.

Let $G$ be a finite subgroup in $\mathrm{GL}_n(\mathbb{C})$, where $n\geqslant 2$. Put $\mathbb{V}=\mathbb{A}^n$ and consider $\mathbb{V}$ as a linear representation of the group $G$. Moreover, let us identify $\mathrm{GL}_n(\mathbb{C})$ with a subgroup of $\mathrm{PGL}_{n+1}(\mathbb{C})=\mathrm{Aut}(\mathbb{P}^n)$ by means of the natural embedding $\mathrm{GL}_n(\mathbb{C})\hookrightarrow\mathrm{GL}_{n+1}(\mathbb{C})$
and the natural projection $\mathrm{GL}_{n+1}(\mathbb{C})\to\mathrm{PGL}_{n+1}(\mathbb{C})$.
In this way, we thus consider $G$ as a subgroup of $\mathrm{Aut}(\mathbb{P}^n)$.

Let $H\cong\mathbb{P}^{n-1}$ be the $\mathrm{GL}_n(\mathbb{C})$-invariant hyperplane  $\mathbb{P}^n\setminus \mathbb{V}$.
The action of $G$ on $H$ is not necessarily faithful. Denote by $\overline{G}$ the image of $G$ in $\mathrm{Aut}(H)=\mathrm{PGL}_{n}(\mathbb{C})$, and denote by $Z$ the kernel of the group homomorphism $G\to\overline{G}$.
Then $Z$ is a cyclic group of order $m\geqslant 1$, and $G$ is a central extension of the group $\overline{G}$ by $Z$.
Letting
$$
X=\mathbb{P}^n/G,
$$
we can identify the quotient $S_X=H/\overline{G}$ with a prime divisor in $X$,
so that by construction
\begin{equation}
\label{equation:quotient}
X\setminus S_X\cong\mathbb{A}^n/G.
\end{equation}

Recall that an element $g\in G\subset\mathrm{GL}_{n}(\mathbb{C})$ is called a \emph{quasi-reflection} if there is a hyperplane in $H\cong\mathbb{P}^{n-1}$ that is pointwise fixed by the image of $g$ in $\overline{G}$. If $G$ is generated by quasi-reflections, then by virtue of the Chevalley-Shephard-Todd theorem $\mathbb{A}^n/G$ is isomorphic to $\mathbb{A}^n$, hence is in particular an affine Fano variety. More generally, we have:

\begin{lemma}
\label{lemma:globquot-affFano} The quotient \eqref{equation:quotient} is an affine Fano variety with completion $X=\mathbb{P}^n/G$ and boundary $S_X=H/\overline{G}$.
\end{lemma}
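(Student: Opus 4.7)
The task is to verify the three defining properties of Definition~\ref{definition:affine-Fano}. As a geometric quotient of a smooth projective variety by a finite group, $X=\mathbb{P}^n/G$ is projective, normal, and $\mathbb{Q}$-factorial. Pullback embeds $\mathrm{Pic}(X)_{\mathbb{Q}}$ into $\mathrm{Pic}(\mathbb{P}^n)_{\mathbb{Q}}^{G}=\mathbb{Q}\cdot[H]$, and $\pi^{*}[S_X]=m[H]\neq 0$, so the Picard rank of $X$ equals one. The subvariety $S_X=H/\overline{G}$ is prime because $H$ is irreducible and $\overline{G}$-invariant.

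For purely log terminality, I plan to apply descent along the finite Galois cover $\pi\colon \mathbb{P}^n\to X$. Let $E_1,\ldots,E_k$ be the fixed hyperplanes of those elements of $G$ which act as pseudo-reflections on $\mathbb{P}^n$ without lying in $Z$ (equivalently, the classical pseudo-reflections of $G\subset\mathrm{GL}_n(\mathbb{C})$ on $\mathbb{V}$), with orders $r_j\geq 2$; the non-trivial elements of $Z$ account for additional ramification of index $m=|Z|$ along $H$. The Riemann--Hurwitz formula then yields
\[
\pi^{*}(K_X+S_X)=K_{\mathbb{P}^n}+H-\sum_{j=1}^{k}(r_j-1)E_j.
\]
The coefficient of $H$ is $1$, while each $E_j$ carries a non-positive coefficient. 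Since $(\mathbb{P}^n,H)$ is log smooth, lowering the coefficients of the remaining divisors can only increase log discrepancies, so the upstairs pair is plt; a standard descent of purely log terminal singularities under finite Galois covers then transfers this to $(X,S_X)$.

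For ampleness of $-(K_X+S_X)$, note that each $E_j$ is a hyperplane in $\mathbb{P}^n$, hence $E_j\sim H$, while $K_{\mathbb{P}^n}\sim -(n+1)H$. The formula above yields
\[
-\pi^{*}(K_X+S_X)\sim_{\mathbb{Q}}\Bigl(n+\sum_{j=1}^{k}(r_j-1)\Bigr)H,
\]
which is ample on $\mathbb{P}^n$. As $\pi$ is finite and surjective and $-(K_X+S_X)$ is $\mathbb{Q}$-Cartier by $\mathbb{Q}$-factoriality, ampleness descends to $X$.

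The main obstacle is the plt step, since the log pullback has negative coefficients on the $E_j$, and so a descent statement accommodating sub-boundaries must be invoked. A cleaner fallback is a local argument: a general point of $S_X$ has stabilizer exactly $Z$ acting by a quasi-reflection, so $X$ is smooth there with $S_X$ a smooth Cartier divisor; for other points one reduces plt of $(X,S_X)$ to klt of $(S_X,\mathrm{Diff}_{S_X}(0))$ by inversion of adjunction (\cite[Theorem~7.5]{Ko97}), and the latter is immediate because $S_X=H/\overline{G}$ inherits only quotient singularities from the smooth $H$.
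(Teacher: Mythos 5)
Your proof is correct in substance, but it takes a genuinely different route from the paper's. The paper first reduces to the case where $G$ is small: since the subgroup of $G$ generated by quasi-reflections is normal, one may quotient by it first and invoke Chevalley--Shephard--Todd to assume $G$ contains no nontrivial quasi-reflection of $\mathbb{V}$; the only quasi-reflections of $G$ acting on $\mathbb{P}^n$ are then the nontrivial elements of $Z$, and the paper factors $\pi$ as $\mathbb{P}^n\to\mathbb{P}(1^n,m)\to X$, where the first map (the quotient by $Z$) is branched only along $H$ and the second is unramified in codimension one. Pure log terminality and ampleness of $-(K_X+S_X)$ are then read off from the completely explicit pair consisting of $\mathbb{P}(1^n,m)$ and a smooth hypersurface missing its singular point, and transferred along the quasi-\'etale cover $\overline{\pi}$, so that no sub-boundaries with negative coefficients ever appear. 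You instead apply the ramification formula to the whole cover $\pi\colon\mathbb{P}^n\to X$ at once, which is more direct but produces the negative coefficients on the reflection hyperplanes $E_j$ and forces you to use the comparison of discrepancies under finite covers in its sub-pair form; this is legitimate, and the decisive observation is yours, namely that the log pullback is bounded above componentwise by $H$, so that $H$ is the unique place of log discrepancy zero upstairs and hence $S_X$ the unique one downstairs, which is exactly what upgrades log canonical to purely log terminal. Your explicit verification of normality, $\mathbb{Q}$-factoriality and Picard rank one is a welcome addition the paper leaves implicit. The only point where you are too quick is the fallback argument: klt-ness of $\left(S_X,\mathrm{Diff}_{S_X}(0)\right)$ is not immediate from $S_X$ having quotient singularities, because the different is generally nonzero; one must first identify $\mathrm{Diff}_{S_X}(0)$ with the Hurwitz branch divisor of $H\to H/\overline{G}$ (this identification is essentially the content of \eqref{eq:alpha}, quoted from the proof of \cite[Theorem~3.16]{CS11}) and then run the same finite-cover descent on $H$. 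Since your primary argument does not rely on the fallback, the proof stands as written.
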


\begin{proof}
Since the subgroup of $G$ generated by quasi-reflections is normal, we may assume that $G$ does not contain non trivial quasi-reflection. Note that $G$ when considered as a subgroup in $\mathrm{GL}_{n+1}(\mathbb{C})$ may contain quasi-reflections. These are just elements of $Z$ that are different from identity. To take these quasi-reflexions into account, we consider the commutative diagram
$$
\xymatrix{
\mathbb{P}^n\ar@{->}[rd]_{\pi}\ar@{->}[rr]^{q}&&\mathbb{P}(1^{n},m)\ar@{->}[dl]^{\overline{\pi}}\\%
&X&}
$$ %
where $\pi$ is the quotient map by the group $G$, the morphism $q$ is the quotient map by the group $Z$, and $\overline{\pi}$ is the quotient map by the group $\overline{G}$.
By construction, the finite morphism $q$ is branched over $q(H)$, which is a smooth hypersurface in $\mathbb{P}(1^{n},m)$ of degree $n$, that does not contain the singular point of the weighted projective space $\mathbb{P}(1^{n},m)$.
Moreover, the finite morphism $\overline{\pi}$ is unramified in codimension one.
This implies that the divisor
$$
-(K_{X}+S_X)
$$
is ample, and $(X,S_X)$ has purely log terminal singularities.
Thus, the quotient \eqref{equation:quotient} is an affine Fano variety as desired.
\end{proof}

In view of the above discussion, to address the question whether the quotient \eqref{equation:quotient} is a super-rigid affine Fano variety, we may and will assume in the sequel that $G$ is \emph{small}, i.e., does not contain any non trivial quasi-reflection.
Let $\alpha_{\overline{G}}(H)$ be the number defined as
$$
\mathrm{sup}\left\{\ \lambda\in\mathbb{Q}\ \left| \
 \aligned
& \mbox{the  pair } (H,\lambda D) \mbox{ is log canonical for every  effective}\\
& \mbox{ $\overline{G}$-invariant $\mathbb{Q}$-divisor $D$ on $H$ such that } D\sim_{\mathbb{Q}} -K_{H}.\\
\endaligned\right.\right\}.
$$
Then $\alpha_{\overline{G}}(H)$ is the $\overline{G}$-equivariant $\alpha$-invariant of Tian of the projective space $H\cong\mathbb{P}^{n-1}$.
Moreover, it follows from the proof of \cite[Theorem~3.16]{CS11} that
\begin{equation}
\label{eq:alpha}
\alpha\left(S_X,\mathrm{Diff}_{S_X}(0)\right)=\alpha_{\overline{G}}(H).
\end{equation}
Furthermore, one has $\alpha_{\overline{G}}(H)\geqslant 1$ if and only if
the quotient singularity $\mathbb{A}^n/G$ is weakly-exceptional in the notation of \cite{CS11,CS14,Sakovics2012,Sakovics2014}.
In particular, if $\alpha_{\overline{G}}(H)\geqslant 1$,
then $\mathbb{V}$ must be an irreducible representation of the group $G$ by \cite[Theorem~3.18]{CS11}.
In this case, the subgroup $\overline{G}\subset\mathrm{Aut}(H)$ is usually called \emph{transitive}.

\begin{lemma}
\label{lemma:criterion2}
One has $\alpha_{\overline{G}}(H)\geqslant 1$ $\Rightarrow$  \eqref{equation:quotient} is super-rigid $\Rightarrow$ $\overline{G}$ is transitive.
\end{lemma}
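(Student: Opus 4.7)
The first implication is immediate from Theorem~B together with the identity \eqref{eq:alpha}: the hypothesis $\alpha_{\overline{G}}(H)\geqslant 1$ gives $\alpha(S_X,\mathrm{Diff}_{S_X}(0))\geqslant 1$, so $X\setminus S_X$ is super-rigid. For the second implication I argue by contrapositive. Suppose that $\overline{G}$ is not transitive, so $\mathbb{V}$ is a reducible $G$-module. By Maschke's theorem I decompose $\mathbb{V}=\mathbb{V}_1\oplus\mathbb{V}_2$ as $G$-modules with $1\leqslant n_i=\dim\mathbb{V}_i$ and $n_1+n_2=n$. The $G$-equivariant linear projection $\pi\colon\mathbb{V}\to\mathbb{V}_2$ then descends to a dominant non-birational morphism $\bar\pi\colon\mathbb{V}/G\to\mathbb{V}_2/G$ onto a base of positive dimension $n_2$, and my goal is to promote $\bar\pi$ to a relative affine Fano structure in the sense of Definition~\ref{definition:affine-Fano-relative}, thereby violating condition~(2) of Definition~\ref{definition:rigid}.

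To this end I take the fibrewise projective completion
\[
\mathcal{X}=\bigl[\mathbb{P}(\mathbb{V}_1\oplus\mathbb{C})\times\mathbb{V}_2\bigr]/G,\qquad \mathcal{B}=\mathbb{V}_2/G,
\]
where $G$ acts diagonally and trivially on the added $\mathbb{C}$-summand of $\mathbb{V}_1\oplus\mathbb{C}$. I let $\rho\colon\mathcal{X}\to\mathcal{B}$ be the morphism induced by the second projection and let $\mathcal{S}_\mathcal{X}\subset\mathcal{X}$ be the prime divisor obtained as the image of the $G$-invariant divisor $\mathbb{P}(\mathbb{V}_1)\times\mathbb{V}_2$. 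By construction $\mathcal{X}\setminus\mathcal{S}_\mathcal{X}\cong\mathbb{V}/G=X\setminus S_X$, so this exhibits $X\setminus S_X$ itself as the affine part of a candidate relative affine Fano variety of the same dimension $n$.

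The key step is then to verify that $(\mathcal{X},\mathcal{S}_\mathcal{X},\rho)$ satisfies Definition~\ref{definition:affine-Fano-relative}. Setting $K=\ker(G\to\mathrm{GL}(\mathbb{V}_2))$, a generic point $v_2\in\mathbb{V}_2$ has stabilizer exactly $K$, so the scheme-theoretic generic fibre of $\rho$ is $\mathbb{P}(\mathbb{V}_1\oplus\mathbb{C})/K$ with boundary $\mathbb{P}(\mathbb{V}_1)/\overline{K}$. This is exactly the setup of Lemma~\ref{lemma:globquot-affFano} applied to the finite subgroup $K\subset\mathrm{GL}(\mathbb{V}_1)$, which provides the Picard-rank-one, $\mathbb{Q}$-factoriality, purely log terminal, and ampleness conditions on the generic fibre. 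Consequently $X\setminus S_X$ contains a relative affine Fano variety of the same dimension over the positive-dimensional base $\mathcal{B}$, contradicting super-rigidity, as required.

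The main obstacle lies in selecting the right compactification: the naive fibration $\bar\pi$ is not itself projective, and a careless completion (for instance working inside $\mathbb{P}^n/G$ and restricting) would not yield a generic fibre of the shape required by Definition~\ref{definition:affine-Fano-relative}. The fibrewise projective completion chosen above is natural and reduces every generic-fibre condition directly to the absolute case of Lemma~\ref{lemma:globquot-affFano}.
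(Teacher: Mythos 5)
Your proof is correct and follows essentially the same route as the paper: the first implication via Theorem~B and \eqref{eq:alpha}, and the second by splitting $\mathbb{V}$ into a direct sum, projecting onto one summand, and invoking Lemma~\ref{lemma:globquot-affFano} to see that the general fibre of the induced fibration is an affine Fano variety, contradicting condition~(2) of Definition~\ref{definition:rigid}. Your explicit fibrewise completion $\bigl[\mathbb{P}(\mathbb{V}_1\oplus\mathbb{C})\times\mathbb{V}_2\bigr]/G$ merely makes precise a step the paper leaves implicit.
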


\begin{proof}
The first implication follows from Theorem B and \eqref{eq:alpha}.
For the second one, suppose on the contrary that $\mathbb{V}$ splits as the direct sum of two nontrivial representations
$$
\mathbb{V}=\mathbb{V}_1\oplus \mathbb{V}_2.
$$
Then the projection $\mathrm{pr}_1\colon\mathbb{V}\rightarrow \mathbb{V}_1$ descends to a fibration
$\rho\colon X\setminus S_X\rightarrow\mathbb{V}_1/G$,
whose general fibers are isomorphic to  $\mathbb{V}_2/G^\prime$,
where $G^\prime\subset G$ denotes the stabilizer of a general fiber of $\mathrm{pr}_1$.
Since $\mathbb{V}_2/G^\prime$ is an affine Fano variety by Lemma \ref{lemma:globquot-affFano},
we see that the generic fiber of $\rho$ is an affine Fano variety. In other words, $\rho:X\setminus S_X\rightarrow \mathbb{V}_1/G$ is a relative affine Fano variety in contradiction to the super-rigidity hypothesis.
\end{proof}

\begin{corollary}
\label{corollary:Nadel}
Suppose that for every irreducible $\overline{G}$-invariant subvariety $Z\subset H$, there exists no hypersurface in $H\cong\mathbb{P}^{n-1}$ of degree $\mathrm{dim}(Z)+1$ that contains $Z$. Then $X\setminus S_X$ is super-rigid.
\end{corollary}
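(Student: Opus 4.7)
The plan is to verify the hypothesis of Lemma~\ref{lemma:criterion2}, namely that $\alpha_{\overline{G}}(H)\geqslant 1$, from which super-rigidity of $X\setminus S_X$ follows at once via the first implication of that lemma (together with the identification \eqref{eq:alpha} and Theorem~B). So the whole game is to translate the combinatorial/geometric hypothesis on $\overline{G}$-invariant subvarieties of $H\cong\mathbb{P}^{n-1}$ into a log canonicity statement for $\overline{G}$-invariant boundaries on $H$.

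Suppose to the contrary that $\alpha_{\overline{G}}(H)<1$. Then there is an effective $\overline{G}$-invariant $\mathbb{Q}$-divisor $D$ on $H$ with $D\sim_{\mathbb{Q}}-K_H$ (so of degree $n$) such that the pair $(H,D)$ is not log canonical. Let $c=\mathrm{lct}(H,D)\in(0,1)$; the non-klt locus of $(H,cD)$ is $\overline{G}$-invariant since $D$ is. I then apply a standard tie-breaking procedure (perturbing by a small positive multiple of a general $\overline{G}$-invariant element of $|\mathcal{O}_H(k)|$ for $k\gg 0$, and rescaling) to pass to an auxiliary $\overline{G}$-invariant pair $(H,\Delta)$ with $\Delta\sim_{\mathbb{Q}}-K_H$ that is strictly log canonical and admits a \emph{unique} minimal log canonical center $Z\subset H$. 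Uniqueness forces $Z$ to be irreducible and, being canonically defined from a $\overline{G}$-invariant pair, $\overline{G}$-invariant.

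The crucial step is then to show that $Z$ lies in a hypersurface of degree $\dim(Z)+1$ in $\mathbb{P}^{n-1}$. This is a Nadel-vanishing argument: applying the vanishing theorem to the multiplier ideal $\mathcal{J}(H,(1-\epsilon)\Delta)$ twisted by $\mathcal{O}_H(\dim(Z)+1)$ and using Kawamata subadjunction on the minimal log canonical center $Z$, the bookkeeping $\deg(\Delta)=n$ and $\dim(H)=n-1$ combine to deliver a nonzero section of $\mathcal{I}_Z\otimes\mathcal{O}_H(\dim(Z)+1)$, exactly as in the weakly-exceptional criterion of Cheltsov-Shramov (\cite{CS11,CS14}). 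This section cuts out a hypersurface of degree $\dim(Z)+1$ in $\mathbb{P}^{n-1}$ passing through the $\overline{G}$-invariant subvariety $Z$, contradicting the hypothesis of the corollary.

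The main obstacle is that last paragraph: getting the precise degree bound $\dim(Z)+1$, rather than some weaker linear function of $n$. It requires combining Nadel vanishing with subadjunction on $Z$ without wasting positivity, and depends crucially on the fact that $H$ is a projective space, so that regularity bounds for ideals of low-dimensional subvarieties are sharp; once this is in place, the rest of the argument is formal manipulation of multiplier ideals and an application of Lemma~\ref{lemma:criterion2}.
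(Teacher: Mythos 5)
Your proposal follows essentially the same route as the paper: both arguments reduce the corollary to verifying $\alpha_{\overline{G}}(H)\geqslant 1$ and then invoke the first implication of Lemma~\ref{lemma:criterion2}. The only difference is that the paper simply cites \cite[Theorem~1.12]{CS14} for the implication from the hypothesis on $\overline{G}$-invariant subvarieties to the bound $\alpha_{\overline{G}}(H)\geqslant 1$, whereas you sketch a proof of that cited result (tie-breaking, Nadel vanishing and subadjunction on a minimal log canonical centre), which is indeed the strategy of Cheltsov--Shramov.
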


\begin{proof}
Indeed, the conditions imply that $\alpha_{\overline{G}}(H)\geqslant 1$ by virtue of \cite[Theorem~1.12]{CS14}.
\end{proof}

One can show that super-rigid affine Fano quotients \eqref{equation:quotient} exist in all dimensions.

\begin{example}
\label{example:Heisenberg}
Suppose that $n$ is an odd prime. Let $G$ is a subgroup in $\mathrm{SL}_n(\mathbb{C})$ that is isomorphic to the Heisenberg group of order $n^3$.
Then $\alpha_{\overline{G}}(H)\geqslant 1$ by \cite[Theorem~1.15]{CS14}, so that $X\setminus S_X$ is super-rigid by Lemma~\ref{lemma:criterion2}.
\end{example}

We now give a complete classification of super-rigid affine Fano varieties \eqref{equation:quotient} of dimensions at most $4$.

\begin{theorem}
\label{theorem:quotient-n-2}
Suppose that $n=2$. Then the following conditions are equivalent:
\begin{enumerate}
\item The affine Fano variety \eqref{equation:quotient} is super-rigid.
\item The inequality $\alpha_{\overline{G}}(H)\geqslant 1$ holds.
\item The curve $H\cong\mathbb{P}^1$ does not have $G$-fixed points.
\item The representation $\mathbb{V}$ is irreducible.
\item The group $G$ is not abelian.
\item The group $G$ is not cyclic.
\end{enumerate}
\end{theorem}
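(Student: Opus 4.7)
The plan is to close the cycle $(2)\Rightarrow(1)\Rightarrow(4)\Rightarrow(3)\Rightarrow(2)$, so that conditions $(1)$ through $(4)$ are equivalent, and then to establish the side equivalences $(4)\Leftrightarrow(5)\Leftrightarrow(6)$ by elementary representation theory.

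The implications $(2)\Rightarrow(1)\Rightarrow(4)$ are immediate from Lemma~\ref{lemma:criterion2}: the first is its first assertion (via Theorem~B), and for the second, super-rigidity forces $\overline{G}$ to be transitive, which for $n=2$ is tautologically the same as saying the two-dimensional representation $\mathbb{V}$ admits no one-dimensional $G$-invariant subspace, i.e.\ $\mathbb{V}$ is irreducible. The implication $(4)\Rightarrow(3)$ is equally automatic: under the identification $H=\mathbb{P}(\mathbb{V})\cong\mathbb{P}^1$, a $\overline{G}$-fixed point of $H$ is precisely a $G$-stable line in $\mathbb{V}=\mathbb{C}^2$, and since $\dim\mathbb{V}=2$ the existence of such a line is the same as reducibility of $\mathbb{V}$.

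The substantive step is $(3)\Rightarrow(2)$, which I would carry out by a direct computation of $\alpha_{\overline{G}}(H)$. Since $-K_H\sim\mathcal{O}_{\mathbb{P}^1}(2)$, every effective $\mathbb{Q}$-divisor $D$ on $H=\mathbb{P}^1$ with $D\sim_{\mathbb{Q}}-K_H$ has degree $2$. Writing $D=\sum a_i P_i$ with the $P_i$ distinct and $a_i>0$, $\overline{G}$-invariance forces the support to decompose into $\overline{G}$-orbits with coefficients constant along each orbit. Under hypothesis $(3)$, every orbit has cardinality at least $2$, so an orbit carrying coefficient $a$ contributes at least $2a$ to $\deg D = 2$; hence $a\leqslant 1$. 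Thus $(H,D)$ is log canonical for every such $D$, yielding $\alpha_{\overline{G}}(H)\geqslant 1$.

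For $(4)\Leftrightarrow(5)$, if $G$ is abelian then every complex irreducible representation is one-dimensional, so $\mathbb{V}$ is reducible; conversely, if $\mathbb{V}=L_1\oplus L_2$ with $L_i$ one-dimensional and $G$-invariant, then $G$ lies in the diagonal torus of $\mathrm{GL}_2(\mathbb{C})$ in a suitable basis and is therefore abelian. For $(5)\Leftrightarrow(6)$ one direction is trivial; for the converse, simultaneous diagonalization places an abelian $G$ inside $(\mathbb{C}^*)^2$, and the smallness assumption ensures that no nontrivial element of $G$ has eigenvalue $1$, so each coordinate projection $G\to\mathbb{C}^*$ is injective and $G$ is cyclic. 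The only step with any real content is the $\alpha$-invariant computation in $(3)\Rightarrow(2)$, and it is a one-line degree argument on $\mathbb{P}^1$; I do not anticipate a genuine obstacle, as the rigidity of the two-dimensional picture collapses all six conditions into a single transparent statement.
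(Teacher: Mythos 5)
Your proof is correct, and its skeleton is the same as the paper's: both arguments hinge on Lemma~\ref{lemma:criterion2} (equivalently, Theorem~B together with the identity \eqref{eq:alpha}) to connect super-rigidity with $\alpha_{\overline{G}}(H)$ and with irreducibility of $\mathbb{V}$, and both use the observation that a small abelian subgroup of $\mathrm{GL}_2(\mathbb{C})$ is cyclic. Where you diverge is in the substantive step: the paper establishes the $\alpha$-invariant condition by writing out $\mathrm{Diff}_{S_X}(0)=\sum_P\frac{n_P-1}{n_P}P$, invoking the closed-form expression for $\alpha\left(S_X,\mathrm{Diff}_{S_X}(0)\right)$ from \cite[Example~3.3]{CS11}, and evaluating it case by case against the classification of finite subgroups of $\mathrm{PGL}_2(\mathbb{C})$ ($\mathfrak{A}_5$, $\mathfrak{S}_4$, $\mathfrak{A}_4$, dihedral, $\mathbb{Z}_2\times\mathbb{Z}_2$, cyclic), whereas your implication $(3)\Rightarrow(2)$ is a self-contained two-line degree count: an invariant divisor of degree $2$ on $\mathbb{P}^1$ supported on orbits of length $\geqslant 2$ has all multiplicities $\leqslant 1$, hence is log canonical. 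Your route is more elementary and avoids both the external citation and the classification; the paper's route yields strictly more information (the exact value of the $\alpha$-invariant for each group type, e.g.\ $6$ for $\mathfrak{A}_5$ and $\tfrac{1}{2}$ in the cyclic case), which is of independent interest but not needed for the equivalence. Your explicit cycle $(2)\Rightarrow(1)\Rightarrow(4)\Rightarrow(3)\Rightarrow(2)$ together with $(4)\Leftrightarrow(5)\Leftrightarrow(6)$ is also logically tidier than the paper's somewhat compressed final sentence, which leaves the reader to assemble the equivalences of $(3)$--$(6)$ with ``$G$ is not cyclic'' on their own.
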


\begin{proof}
We have $S_X\cong H\cong\mathbb{P}^1$.
For every point $O\in H$,
denote by $n_P$ the order of the stabilizer in $\overline{G}$ of any point $O\in H$ that is mapped to $P$ by the quotient map $\pi\colon\mathbb{P}^2\rightarrow X$. Then
$$
\mathrm{Diff}_{S_X}(0)=\sum_{P\in S_X}\frac{n_{P}-1}{n_{P}}P.
$$
Note that $\mathrm{Diff}_{X}(0)\ne 0$ provided that $\overline{G}$ is not trivial.
Let $Q$ be the point in $S_X$ with the largest $n_Q$. Then it follows from \cite[Example~3.3]{CS11} that
$$
\alpha\left(S_X,\mathrm{Diff}_{S_X}(0)\right)=\frac{1-\frac{n_{Q}-1}{n_{Q}}}{2-\sum_{P\in S_X}\frac{n_{P}-1}{n_{P}}}=
\left\{%
\aligned
&6\ \text{if}\ \overline{G}\cong\mathfrak{A}_{5},\\%
&3\ \text{if}\ \overline{G}\cong\mathfrak{S}_{4},\\%
&2\ \text{if}\ \overline{G}\cong\mathfrak{A}_{4},\\%
&1\ \text{if}\ \overline{G}\ \text{is a dihedral group},\\%
&1\ \text{if}\  \overline{G}\cong\mathbb{Z}_{2}\times\mathbb{Z}_2,\\%
&\frac{1}{2}\ \text{if}\  \overline{G}\ \text{is cyclic}.\\%
\endaligned\right.
$$
Furthermore, since we assumed that $G$ is small, it is abelian if and only if it is cyclic. Finally, observe that $\alpha_{\overline{G}}(H)$ is the length of the smallest $\overline{G}$-orbit in $H$, so that
$$ \alpha\left(S_X,\mathrm{Diff}_{S_X}(0)\right)=\alpha_{\overline{G}}(H)=\frac{|\overline{G}|}{n_Q} $$
by virtue of \eqref{eq:alpha}.
Now the conclusion follows from Lemma~\ref{lemma:criterion2} together with the fact that the representation $\mathbb{V}$ splits when $G$ is cyclic.
\end{proof}

Using Theorem~\ref{theorem:quotient-n-2} together with classical results of Miyanishi and Sugie on $\mathbb{Q}$-homology planes with quotient singularities \cite{MiSu}, we obtain the following classification of all two-dimensional super-rigid affine Fano varieties.

\begin{theorem}
\label{theorem:del-Pezzo}
Let $S\setminus C$ be an affine Fano variety of dimension $2$ with completion $S$ and boundary $C$.
Then $S\setminus C$ is super-rigid if and only if the pair $(S,C)$ is isomorphic to a pair
$$
\left(\mathbb{P}^2/G,H/\overline{G}\right)
$$
for some non-cyclic small finite subgroup $G\subset\mathrm{GL}_2(\mathbb{C})$.
\end{theorem}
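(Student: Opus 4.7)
The plan is to combine Theorem~\ref{theorem:quotient-n-2} with the classification of $\mathbb{Q}$-homology planes with quotient singularities of log Kodaira dimension $-\infty$ due to Miyanishi and Sugie \cite{MiSu}. The backward implication is immediate: if $(S,C)\cong (\mathbb{P}^2/G, H/\overline{G})$ for some small non-cyclic $G\subset\mathrm{GL}_2(\mathbb{C})$, then Lemma~\ref{lemma:globquot-affFano} says that the complement is an affine Fano surface, and Theorem~\ref{theorem:quotient-n-2} then guarantees its super-rigidity.

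For the forward direction, suppose $V=S\setminus C$ is super-rigid. The first step is to verify that $V$ is a $\mathbb{Q}$-homology plane having only quotient singularities and satisfying $\overline{\kappa}(V)=-\infty$. Since $(S,C)$ is purely log terminal with $-(K_S+C)$ ample, the surface $S$ is a log del Pezzo of Picard rank one; hence $S$ is rational and its singularities off $C$ are Kawamata log terminal, which in dimension two means quotient. The divisor $C$ generates $\operatorname{Pic}(S)_{\mathbb{Q}}$, so a Gysin-sequence computation on a log resolution $\pi\colon \widetilde{S}\to S$ (using the rationality of $S$ and that the exceptional divisors together with $\pi^{-1}_*C$ form a tree in the dual graph, a consequence of the plt condition) shows that $V$ is $\mathbb{Q}$-acyclic. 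The vanishing $\overline{\kappa}(V)=-\infty$ follows from the ampleness of $-(K_S+C)$ via the plt discrepancy identity
$$
K_{\widetilde{S}}+\widetilde{C}=\pi^{*}(K_S+C)+\sum(a_i+1)E_i,
$$
with $a_i+1>0$, since any positive multiple of this divisor fails to be pseudoeffective.

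Next I would invoke \cite{MiSu} to obtain an isomorphism $V\cong \mathbb{A}^n/G$ with $n=2$ for some small finite subgroup $G\subset \mathrm{GL}_2(\mathbb{C})$. Because $V$ is super-rigid it contains no open $\mathbb{A}^1$-cylinder, whereas $\mathbb{A}^2/G$ with $G$ cyclic is a toric affine surface and therefore always contains such a cylinder (equivalently, by Theorem~\ref{theorem:quotient-n-2}, $\mathbb{A}^2/G$ is super-rigid if and only if $G$ is non-cyclic); hence $G$ must be non-cyclic. Finally, the resulting isomorphism of affine varieties
$$
\phi\colon V=S\setminus C\stackrel{\sim}{\longrightarrow} \mathbb{A}^2/G=(\mathbb{P}^2/G)\setminus (H/\overline{G})
$$
extends, by condition~(1) of Definition~\ref{definition:rigid} applied to the super-rigid $V$, to an isomorphism of pairs $(S,C)\cong (\mathbb{P}^2/G, H/\overline{G})$, as required.

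The main obstacle is the careful invocation of Miyanishi--Sugie in the singular setting: one must ensure that $V$ genuinely satisfies the hypotheses (quotient singularities, $\mathbb{Q}$-acyclicity, and $\overline{\kappa}=-\infty$) so that the dichotomy \emph{``$V$ admits an $\mathbb{A}^1$-fibration (whence an open cylinder) or $V\cong \mathbb{A}^2/G$''} can be applied to force the quotient structure from the absence of cylinders.
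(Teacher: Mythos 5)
Your proposal is correct and follows essentially the same route as the paper: reduce to the Miyanishi--Sugie dichotomy for $\mathbb{Q}$-homology planes with quotient singularities and $\overline{\kappa}=-\infty$, rule out the $\mathbb{A}^1$-fibration case by super-rigidity, exclude cyclic $G$ via Theorem~\ref{theorem:quotient-n-2}, and upgrade the isomorphism $S\setminus C\cong\mathbb{A}^2/G$ to an isomorphism of pairs using condition~(1) of Definition~\ref{definition:rigid}. The extra detail you supply on verifying the Miyanishi--Sugie hypotheses (the $\mathbb{Q}$-acyclicity and the non-pseudoeffectivity of $K_{\widetilde{S}}+D$ coming from the ampleness of $-(K_S+C)$) is a sound elaboration of what the paper asserts in one line.
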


\begin{proof}
Since $(S,C)$ has purely log terminal singularities, $-(K_S+C)$ is ample and the Picard rank of $S$ is equal to $1$,
it follows that $S\setminus C$ is a logarithmic $\mathbb{Q}$-homology plane with smooth locus of negative Kodaira dimension $-\infty$.
By \cite[Theorems~2.7~and~2.8]{MiSu}, the surface $S\setminus C$ either contains an open $\mathbb{A}^1$-cylinder, which is impossible as it is super-rigid by hypothesis,
or is isomorphic to a quotient $\mathbb{A}^2/G$ for a small finite subgroup $G$ of $\mathrm{GL}_2(\mathbb{C})$.

Since $S\setminus C$ is super-rigid, the group $G$ is not cyclic by Theorem  \ref{theorem:quotient-n-2}.

The quotient space $\mathbb{P}^2/G$  is  the natural projective completion of $\mathbb{A}^2/G$ with boundary $H/\overline{G}$.
The isomorphism $S\setminus C\cong \mathbb{A}^2/G$ extends to a birational map $S\dashrightarrow \mathbb{P}^2/G$,
which must be an isomorphism of pairs $(S,C)\cong (\mathbb{P}^2/G,H/\overline{G})$ by the definition of super-rigidity.
\end{proof}

A consequence of Theorem \ref{theorem:quotient-n-2} is that for $n=2$ all three conditions of Lemma~\ref{lemma:criterion2} are actually equivalent. This is no longer true for $n\geqslant 3$ as illustrated by the following example.

\begin{example}
\label{example:quotient-quadric-n-3}
Suppose that $n=3$. Let $G=\mathfrak{A}_5$ and let $\mathbb{V}$ be an irreducible three-dimensional representation of $G$.
Then $\overline{G}\cong G$, and the center $Z$ is trivial.
Moreover, there exists a $\overline{G}$-invariant smooth conic $C\subset H$.
Let $\pi\colon W\to\mathbb{P}^3$ be the blow up of the conic $C$,
with exceptional divisor $E$, and let $\widetilde{H}$ be the proper transform of the plane $H$ on the threefold $W$.
Then there exists a $G$-equivariant commutative diagram
$$
\xymatrix{
W\ar@{->}[rd]_{\pi}\ar@{->}[rr]^{\eta}&& Q\ar@{-->}[dl]^{\psi}\\%
&\mathbb{P}^3&}
$$ %
where $Q$ is a smooth quadric threefold in $\mathbb{P}^4$,
the morphism $\eta$ is the contraction of the surface $\widetilde{H}$ to a smooth point of $Q$,
and $\psi$ is a linear projection from this point.
Then $(Q,\eta(E))$ is purely log terminal and $-(K_{Q}+\eta(E))$ is ample,
so that $(Q,\eta(E))$ is an affine Fano variety.
By construction, we have $Q\setminus\eta(E)\cong\mathbb{P}^3\setminus H$.
Let $Y=Q/G$ and $S_Y=\eta(E)/G$. Then $(Y,S_Y)$ is an affine Fano variety and
$Y\setminus S_Y\cong X\setminus S_X$, so that $(X,S_X)$ is not super-rigid.
\end{example}

In fact, for $n=3$ and $4$, we can obtain criteria for $X\setminus S_X$ to be super-rigid which are similar to Theorem~\ref{theorem:quotient-n-2}.
For example, we have:

\begin{theorem}
\label{theorem:quotient-n-3}
Suppose that $n=3$. Then the following conditions are equivalent:
\begin{enumerate}
\item The affine Fano variety $X\setminus S_X$ is super-rigid;
\item One has $\alpha_{\overline{G}}(H)\geqslant 1$;
\item The plane $H$ contains neither $\overline{G}$-invariant lines nor $\overline{G}$-invariant conics.
\end{enumerate}
\end{theorem}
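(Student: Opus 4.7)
The plan is to prove the cycle $(2) \Rightarrow (1) \Rightarrow (3) \Rightarrow (2)$. The implication $(2) \Rightarrow (1)$ is immediate from Lemma~\ref{lemma:criterion2} combined with the identity \eqref{eq:alpha}. For $(3) \Rightarrow (2)$, I would apply Corollary~\ref{corollary:Nadel}: since $H \cong \mathbb{P}^2$, every irreducible $\overline{G}$-invariant subvariety $Z \subset H$ is a point or a curve. An irreducible invariant curve of degree $1$ or $2$ is a line or a smooth conic, excluded by (3), while irreducible invariant curves of degree $\geqslant 3$ lie in no conic for degree reasons. As for invariant points, Maschke's theorem applied to the finite group $G$ in characteristic zero converts a $G$-invariant $1$-dimensional subspace of $\mathbb{V}$ into an invariant complementary $2$-plane, hence into an invariant line in $H$; so condition (3) forces $H$ to contain no $\overline{G}$-invariant points either. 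Therefore the hypotheses of Corollary~\ref{corollary:Nadel} hold, yielding $\alpha_{\overline{G}}(H) \geqslant 1$.

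The heart of the proof is the contrapositive of $(1) \Rightarrow (3)$: assuming condition (3) fails, I produce an alternative completion or relative affine Fano structure on $X \setminus S_X$. In the first case, where $H$ contains a $\overline{G}$-invariant line $L = \mathbb{P}(W)$, the $2$-dimensional $G$-invariant subspace $W \subset \mathbb{V}$ admits a $G$-invariant complement by Maschke, giving $\mathbb{V} = W \oplus W'$. The argument used for the second implication of Lemma~\ref{lemma:criterion2}---projecting along the invariant summand and observing that the generic fiber is itself a lower-dimensional affine Fano variety---then exhibits $X \setminus S_X$ as a relative affine Fano variety over a positive-dimensional base, contradicting super-rigidity.

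In the remaining case, $H$ contains a $\overline{G}$-invariant conic $C$ but no $\overline{G}$-invariant line. First I would show that $C$ must be smooth and irreducible: a reducible invariant conic is a union of two lines, which are either both invariant (ruled out) or swapped by $\overline{G}$, in which case their intersection is a $\overline{G}$-invariant point and Maschke again produces an invariant line, a contradiction. With $C$ a smooth invariant conic, I would replay the construction of Example~\ref{example:quotient-quadric-n-3}: blow up $C \subset \mathbb{P}^3$ to obtain $\pi \colon W \to \mathbb{P}^3$ with exceptional divisor $E$; the proper transform $\widetilde{H}$ of $H$ carries normal bundle $\mathcal{O}_{\widetilde{H}}(-1)$, because $\mathcal{O}_{\mathbb{P}^3}(H)|_H = \mathcal{O}_H(1)$ and $E|_{\widetilde{H}} = C$ has degree $2$; hence $\widetilde{H}$ can be contracted to a smooth point, yielding a $G$-equivariant morphism $\eta \colon W \to Q$ onto a smooth quadric threefold $Q \subset \mathbb{P}^4$. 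Passing to quotients, the pair $(Y, S_Y) := (Q/G, \eta(E)/G)$ is a genuinely new affine Fano completion of $Y \setminus S_Y \cong X \setminus S_X$---the completions $X$ and $Y$ are not isomorphic as pairs because their boundaries arise as quotients of a plane and of a ruled surface respectively---so super-rigidity fails.

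The step I expect to be the main obstacle is the geometric case analysis in the invariant-conic case, in particular the Maschke-based reduction to the smooth irreducible setting and the verification that the newly constructed completion $(Y, S_Y)$ is genuinely non-isomorphic to $(X, S_X)$ as a log pair. Establishing the normal bundle computation and the $G$-equivariance of the contraction $\eta$ are routine, so the delicate point is confirming that the two completions define distinct classes of pairs, which I would extract from the non-isomorphism of the respective boundary divisors.
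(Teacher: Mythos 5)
Your proposal is correct and its overall architecture ($(2)\Rightarrow(1)$ via Theorem~B and \eqref{eq:alpha}; $(1)\Rightarrow(3)$ by splitting off an invariant line via Lemma~\ref{lemma:criterion2} and by the quadric construction of Example~\ref{example:quotient-quadric-n-3} for an invariant conic) coincides with the paper's. The genuine divergence is in how the equivalence of (2) and (3) is obtained: the paper simply invokes \cite[Theorem~3.23]{CS11}, which gives $(2)\Leftrightarrow(3)$ as a black box, whereas you prove only the direction $(3)\Rightarrow(2)$ needed to close the cycle, doing so via Corollary~\ref{corollary:Nadel}: invariant points are excluded because Maschke would convert a fixed point of $H$ into an invariant line, irreducible invariant curves of degree at most $2$ are excluded by hypothesis, and irreducible curves of degree at least $3$ lie on no conic by B\'ezout. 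This is a sound, essentially self-contained substitute (it trades the citation of \cite{CS11} for the one to \cite{CS14} already present in Corollary~\ref{corollary:Nadel}), and your Maschke-based reduction of a reducible or non-reduced invariant conic to the invariant-line case is a detail the paper leaves implicit. One remark on the final step: you do not actually need to show that the pairs $(X,S_X)$ and $(Y,S_Y)$ are abstractly non-isomorphic, and your proposed justification (``quotient of a plane versus quotient of a ruled surface'') would require real work. Super-rigidity fails already because the \emph{specific} identification $\phi$ cannot extend: any isomorphism of pairs inducing $\phi$ would have to agree, as a rational map, with the birational map $X\dasharrow Y$ coming from the construction, and that map contracts the divisor $S_X$ to a point, so it is not an isomorphism. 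Replacing your last paragraph by this observation removes the only genuinely delicate point you flagged.
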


\begin{proof}
By \cite[Theorem~3.23]{CS11}, the conditions (2) and (3) are equivalent.
By Theorem~B and \eqref{eq:alpha}, the condition (2) implies (1).
Thus, we have to show that (1) implies (3). If $H$ contains a $\overline{G}$-invariant line,
then $X\setminus S_X$ is not super-rigid by Lemma~\ref{lemma:criterion2}.
Similarly, if $H$ contains a $\overline{G}$-invariant conic, then the construction presented in Example~\ref{example:quotient-quadric-n-3}
shows that $X\setminus S_X$ is not super-rigid.
\end{proof}

Similarly, for $n=4$ we have

\begin{theorem}
\label{theorem:quotient-n-4}
Suppose that $n=4$. Then the following conditions are equivalent:
\begin{enumerate}
\item The affine Fano variety $X\setminus S_X$ is super-rigid;
\item One has $\alpha_{\overline{G}}(H)\geqslant 1$;
\item The group $\overline{G}$ is transitive,
and the hyperplane $H$ does not contain  $\overline{G}$-invariant quadrics,
$\overline{G}$-invariant cubic surfaces or $\overline{G}$-invariant twisted cubic curves.
\end{enumerate}
\end{theorem}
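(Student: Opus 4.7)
The plan is to mirror the proof of Theorem \ref{theorem:quotient-n-3} and to establish the cycle of implications $(2)\Rightarrow(1)\Rightarrow(3)\Rightarrow(2)$. The implication $(2)\Rightarrow(1)$ is immediate from Theorem B together with the identity \eqref{eq:alpha}. The equivalence $(2)\Leftrightarrow(3)$ is the four-dimensional counterpart of \cite[Theorem~3.23]{CS11}; I would invoke the classification of weakly-exceptional finite subgroups of $\mathrm{PGL}_4(\mathbb{C})$ due to Sakovics \cite{Sakovics2012,Sakovics2014}, which characterises the inequality $\alpha_{\overline{G}}(H)\geqslant 1$ precisely by the transitivity of $\overline{G}$ on $H\cong\mathbb{P}^3$ together with the non-existence of $\overline{G}$-invariant quadric surfaces, cubic surfaces and twisted cubic curves in $H$.

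For the main implication $(1)\Rightarrow(3)$, I proceed by contraposition and split the argument into four cases. First, if $\overline{G}$ is not transitive, then Lemma \ref{lemma:criterion2} already shows that $X\setminus S_X$ is not super-rigid. In each of the remaining three cases I will construct, in direct analogy with Example \ref{example:quotient-quadric-n-3}, a $G$-equivariant Sarkisov link that produces an alternative affine Fano completion of $X\setminus S_X$.

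Concretely, if $H$ contains a $\overline{G}$-invariant quadric surface $C$, I blow up $C\subset\mathbb{P}^4$ with exceptional divisor $E$ and then contract the proper transform $\widetilde{H}$ through a $G$-equivariant morphism $\eta\colon W\to Q$ onto a smooth point of a smooth quadric fourfold $Q\subset\mathbb{P}^5$; this is the inverse of the standard linear projection from a point on the quadric fourfold. If $H$ contains a $\overline{G}$-invariant cubic surface, the analogous link blows up the cubic surface and contracts $\widetilde{H}$ onto a suitable singular point of a $G$-equivariant Fano fourfold. Finally, if $H$ contains a $\overline{G}$-invariant twisted cubic curve, I blow up the curve in $\mathbb{P}^4$ and contract $\widetilde{H}$ to obtain yet another Fano fourfold model. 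In each of these cases, passing to the quotient by $G$ yields a pair $(Y,S_Y)$ with $Y\setminus S_Y\cong X\setminus S_X$ that is genuinely different from $(X,S_X)$ as an affine Fano completion, contradicting the first condition of Definition \ref{definition:rigid}.

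The main obstacle is the explicit verification of these three $G$-equivariant Sarkisov links: in each case one must check that the target $(Y,S_Y)$ is indeed an affine Fano pair in the sense of Definition \ref{definition:affine-Fano}, namely that $Y$ has $\mathbb{Q}$-factorial singularities with Picard rank one, that the pair $(Y,S_Y)$ is purely log terminal, that $-(K_Y+S_Y)$ is ample, and that the new pair is not isomorphic to the original one. The quadric-surface case closely follows Example \ref{example:quotient-quadric-n-3}, while the cubic-surface and twisted-cubic cases require a careful $G$-equivariant analysis of the normal bundle of $\widetilde{H}$ in $W$ to ensure both the existence of the contraction and that it lands in a Fano model of the desired kind.
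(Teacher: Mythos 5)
Your proposal follows the paper's proof essentially verbatim: the same cycle $(2)\Rightarrow(1)$ via Theorem~B and \eqref{eq:alpha}, $(2)\Leftrightarrow(3)$ via the known characterization of weakly-exceptional four-dimensional quotient singularities, and $(1)\Rightarrow(3)$ by contraposition through Lemma~\ref{lemma:criterion2} together with three $G$-equivariant Sarkisov links obtained by blowing up the invariant subvariety and contracting the proper transform of $H$. The verifications you flag as remaining obstacles are exactly what the paper supplies: for the quadric and cubic surface cases it writes the target explicitly as the hypersurface $wu=f_d(x,y,z,t)$ in $\mathbb{P}(1^5,d-1)$ (using that transitivity of $\overline{G}$ forces $S_d$ to be smooth), and for the twisted cubic it invokes Szurek--Wi\'sniewski and Prokhorov--Zaidenberg to contract $\widetilde{H}$ onto a \emph{plane} (not a point) in the smooth degree-$5$ del Pezzo fourfold $W_5\subset\mathbb{P}^7$.
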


\begin{proof}
By \cite[Theorem 4.3]{CS11}, the conditions (2) and (3) are equivalent.
By Theorem~B and \eqref{eq:alpha}, the condition (2) implies (1).
Thus, we have to show that (1) implies (3). If the group $\overline{G}$ is not transitive,
then $X\setminus S_X$ is not super-rigid by Lemma~\ref{lemma:criterion2}.
Thus, we may assume that $\overline{G}$ is transitive.
Let us show that $X\setminus S_X$ is not super-rigid in the case where
the hyperplane $H$ contains one of the following $\overline{G}$-invariant subvarieties:
a quadric surface, a cubic surface, or a twisted cubic curve.

Suppose first that the hyperplane $H$ contains a $\overline{G}$-invariant surface $S_d$ of degree $d$ such that either $d=2$ or $d=3$.
Because $\overline{G}$ is transitive, this surface must be smooth. This is obvious in the case when $d=2$ while in the case when $d=3$, the conclusion follows from the classification of singular cubic surfaces.

We may assume that $H$ is given by $w=0$, and $S_d$ is given by
$$
\left\{\aligned%
&f_d(x,y,z,t)=0,\\
&w=0,\\
\endaligned
\right.
$$
where $f_d(x,y,z,t)$ is a homogeneous polynomial of degree $d$,
and $x$, $y$, $z$, $t$, $w$ are homogeneous coordinates on $\mathbb{P}^4$.
Let $V$ be a hypersurface in $\mathbb{P}(1^5,d-1)$ given by
$$
wu=f_d(x,y,z,t),
$$
where $x$, $y$, $z$, $t$, $w$, $u$ are quasi-homogeneous coordinates on $\mathbb{P}(1^5,d-1)$
such that $u$ be a coordinate of weight $d-1$.
Then there exists a $G$-equivariant commutative diagram
$$
\xymatrix{
W\ar@{->}[rd]_{\pi}\ar@{->}[rr]^{\eta}&& V\ar@{-->}[dl]^{\psi}\\%
&\mathbb{P}^4&}
$$ %
where $\pi$ is the blow up of the surface $S_d$, with exceptional divisor $E$,
the morphism $\eta$ is the contraction of the proper transform of the hyperplane $H$ to the point $[0:0:0:0:0:1]$,
and $\psi$ is a linear projection from this point.
Then $\eta(E)$ is cut out on $V$ by $w=0$. Let $Y=V/G$ and $S_Y=\eta(E)/G$. Then $(Y,S_Y)$ is an affine Fano variety and
$Y\setminus S_Y\cong X\setminus S_X$. In particular, we see that $X\setminus S_X$ is not super-rigid.

Next suppose that $H$ contains a $\overline{G}$-invariant twisted cubic curve $C$.
Then the group $\overline{G}$ acts faithfully on the curve $C$.
Using the classification of finite subgroups in $\mathrm{PGL}_{2}(\mathbb{C})$,
we see that $\overline{G}$ is isomorphic to one of the three groups $\mathfrak{A}_4$, $\mathfrak{S}_4$, $\mathfrak{A}_5$,
because we assumed that $\overline{G}$ is transitive.
Let $\sigma\colon U\to\mathbb{P}^4$ be the blow up of the curve $C$, with exceptional divisor $F$, and denote by $\widetilde{H}$
the proper transform of the hyperplane $H$ on the fourfold $U$.
Then there exists a $G$-equivariant morphism $\upsilon\colon V\to\mathbb{P}^2$ that is a $\mathbb{P}^1$-bundle (see \cite[Application~1]{SzWi90}).
Its fibers are the proper transforms of the secants of the curve $C$.
Moreover, it follows from \cite[Proposition~3.9]{ProkhorovZaidenberg}
that there exists a $G$-equivariant commutative diagram
$$
\xymatrix{
U\ar@{->}[rd]_{\sigma}\ar@{->}[rr]^{\nu}&& W_5\ar@{->}[dl]^{\phi}\\%
&\mathbb{P}^4&}
$$ %
where $W_5$ is a smooth del Pezzo fourfold in $\mathbb{P}^7$ of degree $5$,
the morphism $\nu$ is a contraction of the proper transform of $\widetilde{H}$ to a plane in $W_5$
such that the restriction $\nu\vert_{\widetilde{H}}\colon\widetilde{H}\to\nu(\widetilde{H})$ is the $\mathbb{P}^1$-bundle $\upsilon$,
and $\phi$ is the linear projection from the plane $\nu(\widetilde{H})$.
Then $\nu(F)$ is a singular hyperplane section of the fourfold $W_5$ such that the pair $(W_5,\nu(F))$ has purely log terminal singularities.
As above, we let $M=W_5/G$ and $S_M=\nu(F)/G$. Then $(M,S_M)$ is an affine Fano variety and
$ M\setminus S_M\cong X\setminus S_X$, so that $X\setminus S_X$ is not super-rigid.
\end{proof}


\section{Complements to del Pezzo surfaces}
\label{section:complements}

Let $S$ be a del Pezzo surface of anticanonical degree at most $3$, i.e., $K_S^2\leqslant 3$, with at worst Du Val singularities.
If the anticanonical degree is $3$, then $S$ is a cubic surface in $\mathbb{P}^3$.
Similarly, if the anticanonical degree is $2$, then $S$ is a quartic surface in $\mathbb{P}(1,1,1,2)$.
Finally, if the anticanonical degree is $1$, then $S$ is a sextic surface in $\mathbb{P}(1,1,2,3)$.

In this section, we study automorphism groups of affine Fano varieties whose boundary is $S$ and whose completion
is the corresponding ambient weighted projective space. Using the result obtained in this section we are able to yield many nontrivial non-examples of super-rigid affine Fano threefolds.

Let $\mathbb{P}$ be the ambient spaces $\mathbb{P}^{3}$, $\mathbb{P}(1,1,1,2)$ and $\mathbb{P}(1,1,2,3)$ in the cases of anticanonical degrees $3$, $2$ and $1$, respectively.
We have
$$
\mathbb{P}=\mathrm{Proj}\left(\mathbb{C}[x,y,z,w]\right),
$$
where the variables $x$, $y$, $z$ and $w$ are of weights $(1,1,1,1)$, $(1,1,1,2)$ and $(1,1,2,3)$ according to anticanonical degrees $3$, $2$ and $1$.
Denote by $d$ the degree of the surface $S$ as a hypersurface in $\mathbb{P}$.
Then $d$ is equal to $3$, $4$ and $6$ according to anticanonical degrees $3$,~$2$ and $1$, respectively.

The following theorem summarizes the current knowledge towards the structure of the automorphism groups of the  complements of smooth del Pezzo hypersurfaces.

\begin{theorem}
\label{theorem:smoothdP}
Suppose that $S$ is smooth. If its anticanonical degree is $1$, then
$$
\mathrm{Aut}\left(\mathbb{P}\setminus S\right)=\mathrm{Aut}\left(\mathbb{P},S\right)
$$
is a finite group. If its anticanonical degree is either $2$ or $3$, then $\mathrm{Aut}(\mathbb{P}\setminus S)$ does not contain nontrivial connected algebraic groups.
\end{theorem}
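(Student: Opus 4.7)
The argument naturally separates according to the anticanonical degree. For $K_S^2=1$ (so $\mathbb{P}=\mathbb{P}(1,1,2,3)$ and $S$ is a smooth sextic), I would appeal directly to Theorem~B. Because $\mathbb{P}$ has only isolated singularities, its singular locus has no component of dimension $\dim(\mathbb{P})-2=1$, hence $\mathrm{Diff}_S(0)=0$ and $\alpha(S,\mathrm{Diff}_S(0))$ coincides with the classical $\alpha$-invariant of $S$, which is at least $1$ for every smooth del Pezzo surface of degree one by \cite{Ch07a,ChKo14} (smoothness trivially realizes the $\mathrm{A}_1$--$\mathrm{A}_3$ hypothesis of Example~\ref{example:Grinenko}). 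Theorem~B then yields super-rigidity of $\mathbb{P}\setminus S$, i.e.\ $\mathrm{Aut}(\mathbb{P}\setminus S)=\mathrm{Aut}(\mathbb{P},S)$. To deduce finiteness, I would verify that the restriction map $\mathrm{Aut}(\mathbb{P},S)\to\mathrm{Aut}(S)$ has trivial kernel by a direct computation in the weighted coordinates $(x,y,z,w)$, showing that any automorphism of $\mathbb{P}(1,1,2,3)$ fixing every point of a smooth sextic must be the identity. Combined with the classical finiteness of the automorphism group of a smooth del Pezzo surface of degree one, this gives finiteness of $\mathrm{Aut}(\mathbb{P},S)$.

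For $K_S^2\in\{2,3\}$ super-rigidity may fail because $\alpha(S)<1$ is possible, so I would instead target only the weaker statement that $\mathrm{Aut}(\mathbb{P}\setminus S)$ admits no positive-dimensional connected algebraic subgroup $G$. Every $g\in G$ extends to a birational self-map of $\mathbb{P}$ preserving $S$ generically; taking the proper transform of a general divisor in $|\mathcal{O}_{\mathbb{P}}(1)|$ defines a positive integer $d_g$ via $g^{*}\mathcal{O}_{\mathbb{P}}(1)=\mathcal{O}_{\mathbb{P}}(d_g)$ in $\mathrm{Cl}(\mathbb{P})\cong\mathbb{Z}$, and biregularity of $g$ on $\mathbb{P}$ is equivalent to $d_g=1$. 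I would organize these birational maps into a single family via the graph closure $\mathcal{Z}\subset G\times\mathbb{P}\times\mathbb{P}$ of the $G$-action; the function $g\mapsto d_g$ then varies algebraically over the irreducible variety $G$ and, being integer-valued, is constant. Since $d_e=1$, every $g\in G$ lies in $\mathrm{Aut}(\mathbb{P})$, so $G\subseteq\mathrm{Aut}(\mathbb{P},S)$. The injective restriction $\mathrm{Aut}(\mathbb{P},S)\hookrightarrow\mathrm{Aut}(S)$ — established as in the degree-one case — embeds $G$ into the finite group $\mathrm{Aut}(S)$ (finite for every smooth del Pezzo surface of degree $\leqslant 4$), and a connected subgroup of a finite group is trivial, contradicting $\dim G>0$.

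The most delicate point is the locally-constancy of $d_g$ on $G$. Making it rigorous requires a simultaneous resolution of all the birational self-maps $\{g\}_{g\in G}$, which I would accomplish through the graph-closure $\mathcal{Z}$ above while tracking the relative class of the proper transform of a general hyperplane. An equivalent route, which I expect to be cleaner in the write-up, is to invoke Corollary~\ref{corollary:NFI-1} together with semi-continuity of log canonicality: if some $g\in G$ failed to extend biregularly, the induced mobile linear system $\mathcal{M}_g$ would witness $(\mathbb{P},S+\lambda\mathcal{M}_g)$ not being log canonical along $S$; log canonicality is an open condition in families of pairs, and the pair at $g=e$ is log canonical because $\mathcal{M}_e\subset|\mathcal{O}_{\mathbb{P}}(1)|$ and $S$ is smooth, so the locus of non-extending elements is closed in $G$, hence empty by connectedness.
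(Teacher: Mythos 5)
Your proposal has genuine gaps in both halves. For $K_S^2=1$, the claim that $\alpha(S)\geqslant 1$ for \emph{every} smooth del Pezzo surface of degree one is false: by \cite{Ch07a}, one has $\alpha(S)=\tfrac{5}{6}<1$ whenever the pencil $|-K_S|$ contains a cuspidal curve, which happens for plenty of smooth sextics in $\mathbb{P}(1,1,2,3)$ (e.g. $w^2=z^3+f_6(x,y)$ has six cuspidal anticanonical members). This is exactly the second hypothesis of Example~\ref{example:Grinenko} that you dismissed; smoothness only disposes of the first. So Theorem~B does not apply uniformly, and your argument proves the statement only for those smooth $S$ with no cuspidal anticanonical curves. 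The paper instead passes to the degree-$6$ cyclic cover $V\subset\mathbb{P}(1,1,1,2,3)$ branched along $S$ and invokes Grinenko's theorem $\mathrm{Bir}(V)=\mathrm{Aut}(V)$ to get $\mathrm{Aut}(V\setminus H)=\mathrm{Aut}(V,H)$, which descends through the split exact sequence of deck transformations to $\mathrm{Aut}(\mathbb{P}\setminus S)=\mathrm{Aut}(\mathbb{P},S)$.

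For $K_S^2\in\{2,3\}$ the central step of your argument is false as stated: the degree $d_g$ of the induced birational self-map of $\mathbb{P}$ is \emph{not} locally constant on a connected algebraic group, only constant on a dense open subset (it is lower semicontinuous, dropping on closed subsets where the defining polynomials acquire a common factor). The standard counterexample is the $\mathbb{G}_a$-action $(x,y)\mapsto(x,y+tx^2)$ on $\mathbb{A}^2=\mathbb{P}^2\setminus L$: the extension to $\mathbb{P}^2$ has degree $2$ for $t\neq 0$ and degree $1$ at $t=0$, so ``integer-valued algebraic, hence constant'' fails, and your argument would wrongly conclude that every connected algebraic subgroup of $\mathrm{Aut}(\mathbb{A}^2)$ is linear. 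Note also that your argument uses no special property of $S$ beyond smoothness and would therefore prove far too much. The proposed repair via openness of log canonicity does not obviously close the gap either, since $\lambda_g=1/d_g$ and the linear systems $\mathcal{M}_g$ do not form a flat family precisely because $d_g$ jumps. The paper's actual argument is entirely different and is where the real content lies: lift the putative $\mathbb{G}_a$- or $\mathbb{G}_m$-action to the \'etale cover $V\setminus H$ of $\mathbb{P}\setminus S$, where $V$ is the smooth cubic threefold (resp.\ quartic double solid), apply Rosenlicht's theorem to conclude that $V$ is birational to $Z\times\mathbb{P}^1$ and hence rational, and contradict the non-rationality results of Clemens--Griffiths (resp.\ Voisin). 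Any correct proof must input such a non-rationality statement somewhere; yours does not.
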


\begin{proof}
We follow the same strategy as \cite[$\S$ 2.2]{DuKi15},
which consists in shifting the question to a suitable finite \'etale cover of $\mathbb{P}\setminus S$.
Namely let $\pi\colon V\rightarrow \mathbb{P}$ be a cyclic Galois cover of degree $d$ branched along $S$ and \'etale elsewhere.
Then $V$ is a smooth Fano threefold of index $2$, isomorphic to a hypersurfaces of degree $d$ in $\mathbb{P}^{4}$, $\mathbb{P}(1,1,1,1,2)$ and $\mathbb{P}(1,1,1,2,3)$ in the cases $K_S^2=3$, $2$ and $1$, respectively.
Furthermore, the ramification divisor of $\pi$ coincides with a hyperplane section $H$ of $V$.
We then have a split exact sequence of groups
$$
0\rightarrow \mathrm{Aut}\left(V\setminus H,\pi\right)\rightarrow\mathrm{Aut}\left(V\setminus H\right)\rightarrow\mathrm{Aut}\left(\mathbb{P}\setminus S\right)\rightarrow 0,
$$
where $\mathrm{Aut}(V\setminus H,\pi)\cong\mathbb{Z}/d\mathbb{Z}$ denotes the group of deck transformations of the induced \'etale Galois cover
$\pi|_{V\setminus H}\colon V\setminus H\rightarrow \mathbb{P}\setminus S$.
The surjectivity of the right hand side homomorphism follows from the fact that
$$
\pi_*\mathcal{O}_{V\setminus H}\cong\bigoplus_{i=0}^{d-1}\left(\omega_{\mathbb{P}\setminus S}^{-1}\right)^{\otimes i}
$$
where $\omega_{\mathbb{P}\setminus S}^{-1}=\omega_{X}^{-1}|_{\mathbb{P}\setminus S} \cong \mathcal{O}_{\mathbb{P}}(d+1)|_{\mathbb{P}\setminus S}\cong\mathcal{O}_{\mathbb{P}}(1)|_{\mathbb{P}\setminus S}$ denotes the anticanonical sheaf of the variety $\mathbb{P}\setminus S$.

If $K_S^2=1$, then $\mathrm{Bir}(V)=\mathrm{Aut}(V)$ by \cite{Grinenko1,Grinenko2},
so that $\mathrm{Aut}(V\setminus H)=\mathrm{Aut}(V,H)$.
This implies in turn that $\mathrm{Aut}(\mathbb{P}\setminus S)=\mathrm{Aut}(\mathbb{P},S)$, which is a finite group.

If $K_S^2=2$ or $K_S^2=3$, then $V$ is not rational by \cite[Corollaire~4.8]{Voi88} and \cite{CG72}, respectively.
Now suppose that $\mathrm{Aut}(\mathbb{P}\setminus S)$ contains a nontrivial connected algebraic subgroup $G$. First note that $G$ is necessarily an affine algebraic group. Indeed, otherwise by Chevalley's theorem, there would exist  a maximal proper normal linear algebraic subgroup $G'$ of $G$ such that $G/G'$ is  abelian variety, and so the affine variety $\mathbb{P}\setminus S$ would inherit a nontrivial action of a proper connected algebraic group, which is absurd. Since it is affine and connected, $G$ thus contains either $\mathbb{G}_m$ or $\mathbb{G}_a$, which implies in turn that $V\setminus H$ admits a nontrivial action of either $\mathbb{G}_m$ or $\mathbb{G}_a$. By virtue of Rosenlicht's Theorem \cite{Ro56}, it would follow  that $V$ is birational to $Z\times \mathbb{P}^1$ for some rational surface $Z$. But then, the threefold $V$ itself would be rational, which is a contradiction.
\end{proof}

A consequence of Theorem~\ref{theorem:smoothdP} is the following generalization of \cite[Proposition 10]{DuKi15}.

\begin{corollary}
\label{cor:nocylinder}
If $S$ is smooth, then $\mathbb{P}\setminus S$ does not contain open $\mathbb{A}^1$-cylinders.
\end{corollary}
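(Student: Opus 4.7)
The plan is to argue by contradiction along the lines of the proof of Theorem~\ref{theorem:smoothdP}, by lifting a hypothetical cylinder from $\mathbb{P}\setminus S$ to the cyclic \'etale cover $\pi\colon V\setminus H\to\mathbb{P}\setminus S$ of degree $d$ introduced there, and then invoking the fact that the Fano threefold $V$ is not birationally ruled.

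Suppose that $\mathbb{P}\setminus S$ contains an open $\mathbb{A}^{1}$-cylinder $U\cong Z\times\mathbb{A}^{1}$, where $Z$ is a quasi-projective surface. The first step is to restrict $\pi$ to obtain a finite \'etale cover $\pi^{-1}(U)\to Z\times\mathbb{A}^{1}$ of degree $d$. Since $\mathrm{char}(\Bbbk)=0$ and the \'etale fundamental group of $\mathbb{A}^{1}$ is trivial, every finite \'etale cover of $Z\times\mathbb{A}^{1}$ is the pullback via the first projection of a finite \'etale cover of $Z$. Consequently $\pi^{-1}(U)\cong\widetilde Z\times\mathbb{A}^{1}$ for some \'etale cover $\widetilde Z\to Z$ of degree $d$, which exhibits $V\setminus H$---and hence $V$ itself---as a variety containing an open $\mathbb{A}^{1}$-cylinder. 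In particular, $V$ is birational to $\widetilde Z\times\mathbb{P}^{1}$, so $V$ is birationally ruled.

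It then remains to exclude this for each of the three values $K_{S}^{2}\in\{1,2,3\}$. When $K_{S}^{2}=1$, the sextic Fano threefold $V\subset\mathbb{P}(1,1,1,2,3)$ is birationally super-rigid by Grinenko \cite{Grinenko1,Grinenko2}, hence not birational to any product $\widetilde Z\times\mathbb{P}^{1}$. When $K_{S}^{2}=3$, the Clemens-Griffiths theorem \cite{CG72} asserts that the intermediate Jacobian of the smooth cubic threefold $V$ is an indecomposable principally polarized abelian fivefold, which precludes $V$ from being birational to any threefold of the form $\widetilde Z\times\mathbb{P}^{1}$. The case $K_{S}^{2}=2$ of the smooth quartic double solid is analogous, using Voisin's intermediate Jacobian analysis \cite{Voi88}.

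The main obstacle is this last case analysis: whereas the proof of Theorem~\ref{theorem:smoothdP} only needs the \emph{irrationality} of $V$ (through Rosenlicht's theorem applied to a hypothetical $\mathbb{G}_a$- or $\mathbb{G}_m$-action), ruling out arbitrary open $\mathbb{A}^{1}$-cylinders in $\mathbb{P}\setminus S$ requires the strictly stronger statement that $V$ is not birationally ruled. For $K_{S}^{2}\in\{2,3\}$, this has to be extracted from the full Clemens-Griffiths and Voisin intermediate-Jacobian arguments rather than from the non-rationality statements used in the proof of Theorem~\ref{theorem:smoothdP}.
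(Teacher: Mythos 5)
Your argument is correct, but it takes a genuinely different route from the paper's. The paper's proof first observes that $\mathrm{Cl}(\mathbb{P}\setminus S)\cong\mathbb{Z}/d\mathbb{Z}$ is finite, invokes \cite[Proposition~2]{DuKi15} to convert any hypothetical open $\mathbb{A}^1$-cylinder in $\mathbb{P}\setminus S$ into a nontrivial $\mathbb{G}_a$-action, and then contradicts Theorem~\ref{theorem:smoothdP}, which says that $\mathrm{Aut}(\mathbb{P}\setminus S)$ contains no nontrivial connected algebraic subgroup. You instead lift the cylinder itself through the degree-$d$ cyclic \'etale cover $\pi|_{V\setminus H}$, using the characteristic-zero fact that finite \'etale covers of $Z\times\mathbb{A}^1$ are pulled back from $Z$ (equivalently $\pi_1(Z\times\mathbb{A}^1)\cong\pi_1(Z)$ for \'etale fundamental groups), so that $V$ acquires an open $\mathbb{A}^1$-cylinder and is therefore birationally ruled. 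This bypasses both the divisor class group computation and the external input from \cite{DuKi15}; it is arguably more direct, and it yields the stronger intermediate statement that $V$ itself contains an open $\mathbb{A}^1$-cylinder, at the cost of the (standard) purity statement about \'etale covers of cylinders.

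Two small corrections to your final case analysis. First, the ``main obstacle'' you describe is not actually an obstacle: $V$ is a smooth Fano threefold, hence rationally connected, so if $V$ were birational to $\widetilde{Z}\times\mathbb{P}^1$ then $\widetilde{Z}$ would be a rationally connected, hence rational, surface, and $V$ would be rational. Thus plain irrationality of $V$ --- exactly what Clemens--Griffiths, Voisin and Grinenko provide and what the paper already uses in Theorem~\ref{theorem:smoothdP} --- suffices to exclude birational ruledness; no strengthening of those results is needed. Second, the smooth sextic in $\mathbb{P}(1,1,1,2,3)$ is not birationally super-rigid: Grinenko shows it carries exactly one further Mori fibre space structure, a del Pezzo fibration of degree $1$ over $\mathbb{P}^1$. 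His classification nevertheless gives $\mathrm{Bir}(V)=\mathrm{Aut}(V)$ and irrationality, which is all your argument requires.
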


\begin{proof}
Observe that the divisor class group
$$
\mathrm{Cl}\left(\mathbb{P}\setminus S\right)\cong\mathbb{Z}/d\mathbb{Z}
$$
is finite. Thus, it follows from \cite[Proposition 2]{DuKi15} that for every open $\mathbb{A}^1$-cylinder $B\times\mathbb{A}^1$ in $\mathbb{P}\setminus S$, there exists an action of $\mathbb{G}_{a}$ on $\mathbb{P}\setminus S$ whose general orbits coincide
with the general fibers of the projection $\mathrm{pr}_{B}\colon B\times \mathbb{A}^1\rightarrow B$.
But on the other hand, the group $\mathrm{Aut}(\mathbb{P}\setminus S)$ does not contain any nontrivial connected algebraic group by Theorem \ref{theorem:smoothdP}.
\end{proof}

It is known that a smooth del Pezzo surface of anticanonical degree at most $3$ never contains any  $(-K_S)$-polar cylinder. For the singular case, we obtain a complete description for $(-K_{S})$-polar cylinders from \cite[Theorem~1.5]{CPW16}.
\begin{theorem}
\label{theorem:KS-polar}
 The surface $S$ does not contain any  $(-K_{S})$-polar cylinder if and only if
one of the following conditions is satisfied:
\begin{enumerate}
\item Its anticanonical degree is $1$  and  it has only singular points of types $\mathrm{A}_1$, $\mathrm{A}_2$, $\mathrm{A}_3$, $\mathrm{D}_4$ if any;
\item Its anticanonical degree is $2$  and  it allows only singular points of type $\mathrm{A}_1$ if any;
\item Its anticanonical degree is $3$  and  it allows no singular point.
\end{enumerate}
\end{theorem}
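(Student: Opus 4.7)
The plan is to establish the equivalence in Theorem~\ref{theorem:KS-polar} via two independent arguments --- a non-existence argument for the listed classes of singularities and an explicit construction of cylinders for the remaining cases --- organized by the anticanonical degree $K_S^2 \in \{1,2,3\}$.

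For the non-existence direction, assume $S$ has singularities only of the types listed in (1)--(3), and suppose for contradiction that $S$ contains a $(-K_S)$-polar $\mathbb{A}^1$-cylinder $U$, so that $S \setminus U$ is the support of an effective $\mathbb{Q}$-divisor $D \sim_{\mathbb{Q}} -K_S$ and $U \cong Z \times \mathbb{A}^1$ for a curve $Z$. The $\mathbb{A}^1$-fibration on $U$ extends to a rational $\mathbb{A}^1$-fibration $\pi\colon S \dashrightarrow Z$, and the closure of a general fibre is a rational curve $F$ with $F \cdot D = 0$. I would then invoke the global $\alpha$-invariant computations of \cite{Ch07a, ChKo14}, which show that the listed singular types are precisely those for which $\alpha(S, \mathrm{Diff}_S(0)) \geqslant 1$, so that the pair $(S, D)$ must be log canonical. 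Pulling $D$ and the general fibre $F$ back to the minimal resolution $\widetilde{S} \to S$, log canonicity together with the numerical conditions $F \cdot D = 0$ and $F \cdot (-K_S) > 0$ produce a contradiction with the configuration of exceptional $(-2)$-curves over the allowed singular points.

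For the existence direction, when the singularities are strictly worse than those listed, I would construct a $(-K_S)$-polar cylinder using the deeper singular points as the centre of an auxiliary birational map. In anticanonical degree $3$, any Du Val singular point of the cubic $S \subset \mathbb{P}^3$ gives, by linear projection from that point, a birational map to $\mathbb{P}^2$ resolving to a conic bundle on $\widetilde{S}$; removing a suitable reducible fibre together with a section produces a cylinder whose complement is $\mathbb{Q}$-linearly equivalent to $-K_S$. For degree $2$, an $A_2$ or worse singularity on the double cover $S \to \mathbb{P}^2$ plays the same role, composed with the double cover structure through the ramification locus. For degree $1$, a singularity of type $A_4$, $D_5$, $E_6$ or worse on the sextic in $\mathbb{P}(1,1,2,3)$ controls a pencil of rational curves whose generic member, together with an explicit anticanonical component, cuts out the cylinder. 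In each case, one verifies $(-K_S)$-polarity of the boundary by an explicit intersection-theoretic computation on $\widetilde{S}$, using the known dual graph of the exceptional locus over the chosen singular point.

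The main obstacle is the existence direction, since each singular configuration requires its own tailored construction, and the verification of $(-K_S)$-polarity --- as opposed to polarity with respect to some larger rational multiple of $-K_S$ --- demands careful bookkeeping of discrepancies and coefficients of the exceptional divisors. The non-existence direction is conceptually more uniform through its appeal to global log canonical thresholds, but the listed types sit exactly at the boundary $\alpha(S, \mathrm{Diff}_S(0)) = 1$, so purely numerical bounds do not close the argument and one needs a refined analysis of the equality cases on $\widetilde{S}$. This is precisely the content of \cite[Theorem~1.5]{CPW16}, whose classification I would quote to complete the proof once the strategy above is set up.
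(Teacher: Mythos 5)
The paper gives no proof of Theorem~\ref{theorem:KS-polar} at all: it is imported verbatim from \cite[Theorem~1.5]{CPW16}, and your proposal, after sketching a two-directional strategy, likewise closes by quoting that same external classification --- so in the only respect that matters you take the same route as the paper. Your outline of how the cited result is actually established (log canonicity of anticanonical pairs for the listed singularity types in one direction, explicit cylinder constructions centred at the worse singular points in the other) is a reasonable description of the Cheltsov--Park--Won argument, but none of it is carried out here and the paper does not attempt it either.
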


Existence of open $\mathbb{A}^1$-cylinders in the complements of singular normal cubic surfaces with singularities
strictly worse than $\mathrm{A}_2$ was first established in \cite[Proposition~3.7]{KPZ11}. A first example of nodal cubic surface whose complement contains an open $\mathbb{A}^1$-cylinder was constructed later on by Lamy (unpublished). Combined with the above results on $(-K_S)$-polar cylinders of singular del Pezzo surfaces and Example~\ref{example:Grinenko}, this leads to  anticipate that the complement of a surface with a $(-K_S)$-polar cylinder contains an open $\mathbb{A}^1$-cylinder. Ideas evolving from the proof of \cite[Theorem~4.1]{CPW16} and the study of open ``vertical'' $\mathbb{A}^1$-cylinders in del Pezzo fibrations \cite{DuKi16} turn out to confirm this expectation, namely:

\begin{theorem}[Theorem~C]
\label{theorem:cylinder}
If the surface $S$ contains an open $(-K_S)$-cylinder, then the affine Fano variety  $\mathbb{P}\setminus S$ contains an open $\mathbb{A}^1$-cylinder.
\end{theorem}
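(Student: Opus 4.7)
My plan follows the strategy of the proof of Theorem~4.1 in \cite{CPW16} combined with the vertical-cylinder construction for del Pezzo fibrations developed in~\cite{DuKi16}, adapted from the fibration setting to the present setting of a hypersurface in a weighted projective space.

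Write the given $(-K_S)$-polar cylinder as $U = S\setminus\mathrm{supp}(D)\cong Z\times\mathbb{A}^1$, where $D$ is an effective $\mathbb{Q}$-divisor on $S$ with $D\sim_{\mathbb{Q}}-K_S$. By adjunction $-K_S \sim H|_S$, where $H$ denotes the ample generator of $\mathrm{Cl}(\mathbb{P})$. Hence, after clearing denominators, $\mathrm{supp}(D)$ is cut out on $S$ by an effective divisor $T$ on $\mathbb{P}$ of class $mH$ for some positive integer~$m$. The open subset $\mathbb{P}\setminus(S\cup T)$ is then contained in $\mathbb{P}\setminus S$, so it suffices to exhibit an open $\mathbb{A}^1$-cylinder inside $\mathbb{P}\setminus(S\cup T)$ whose closure in $\mathbb{P}$ avoids $S$.

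The core of the plan is to use the pencil $\Psi\colon\mathbb{P}\dashrightarrow\mathbb{P}^1$ generated by suitable powers of the defining sections of $S$ and $T$, arranged so that $S$ and $T$ are the fibers of $\Psi$ over $\infty$ and $0$ respectively; the base locus of $\Psi$ then lies in $S\cap T=\mathrm{supp}(D)$. A birational morphism $\sigma\colon\widetilde{\mathbb{P}}\to\mathbb{P}$ resolving the indeterminacy of $\Psi$ with exceptional locus supported on $S\cup T$ yields a morphism $\widetilde{\Psi}\colon\widetilde{\mathbb{P}}\to\mathbb{P}^1$, whose fiber over $\infty$ contains the strict transform $\widetilde{S}$. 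I would then use the $(-K_S)$-polar cylinder structure on $S$ to construct a vertical open $\mathbb{A}^1$-cylinder $\widetilde{W}\cong B\times\mathbb{A}^1 \subset \widetilde{\mathbb{P}}$ disjoint from $\widetilde{S}$: the $\mathbb{A}^1$-fibers of $U\to Z$ lie in $S\setminus T$, their strict transforms in $\widetilde{S}\subset\widetilde{\mathbb{P}}$ deform sideways into nearby fibers of $\widetilde{\Psi}$, and the $(-K_S)$-polar hypothesis makes the normal bundle $N_{\widetilde{S}/\widetilde{\mathbb{P}}}$ trivial over $U$ (since $N_{\widetilde{S}/\widetilde{\mathbb{P}}}|_U$ lies in the kernel of the restriction $\mathrm{Pic}(\widetilde{S})\to\mathrm{Pic}(U)$), so that these deformations fit into a global algebraic $\mathbb{A}^1$-family. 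Descending $\widetilde{W}$ via $\sigma$ produces the desired cylinder in $\mathbb{P}\setminus S$.

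The main obstacle lies in the algebraization step producing $\widetilde{W}$: although the $(-K_S)$-polar hypothesis provides the necessary triviality of the normal bundle over $U$, turning a merely formal product structure along $\widetilde{S}$ into an honest algebraic open $\mathbb{A}^1$-cylinder requires explicit geometric input, controlling which components of the fibers of $\widetilde{\Psi}$ near $\widetilde{S}$ are contracted by $\sigma$ and which support the extension of the cylinder. This is exactly where the classification of $(-K_S)$-polar cylinders in Theorem~\ref{theorem:KS-polar}, together with the Du Val hypothesis and the bound $K_S^2\leqslant 3$, enters: it forces the singularities of $S$ and the combinatorics of $\mathrm{supp}(D)$ to belong to a short explicit list, which allows the vertical $\mathbb{A}^1$-cylinder construction from~\cite{DuKi16} to be carried out in the present non-fibration setting. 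Combined with Corollary~\ref{cor:nocylinder} (which rules out the analogous phenomenon in the smooth case), this also explains why the hypothesis that $S$ admits a $(-K_S)$-polar cylinder is both necessary and sufficient.
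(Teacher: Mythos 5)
Your overall strategy --- realize $S$ as a member of a pencil of degree-$d$ hypersurfaces in $\mathbb{P}$ whose base locus lies in the complement of a cylinder, and then invoke the vertical-cylinder machinery of \cite{DuKi16} --- is the same as the paper's. But the step you yourself flag as the ``main obstacle'' is exactly where the argument breaks down, and the fix you propose (normal-bundle triviality over $U$ plus an appeal to Theorem~\ref{theorem:KS-polar}) does not close it. What \cite[Lemma~3]{DuKi16} requires is an $\mathbb{A}^1$-cylinder in the \emph{generic} fiber $S_\eta$ of the pencil, defined over the function field $\Bbbk(\lambda)$ of the base; a cylinder in the special fiber $S$ does not ``deform sideways'' merely because $N_{S/\mathbb{P}}\vert_U$ is trivial. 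Triviality of the normal bundle over $U$ is indeed a consequence of $(-K_S)$-polarity, but it only controls first-order data of the embedding $U\hookrightarrow\mathbb{P}$ and says nothing about whether the product structure $U\cong Z\times\mathbb{A}^1$ extends to neighbouring members of the pencil: an arbitrary $(-K_S)$-polar cylinder on $S$ has no reason to be the specialization of a cylinder on $S_\eta$, and there is no general algebraization statement of the kind you are implicitly invoking. This is not a technical point to be absorbed into a citation of the classification; it is the actual content of the proof.

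The paper circumvents this entirely by \emph{not} using the given cylinder. The hypothesis enters only through Theorem~\ref{theorem:KS-polar}, which forces $S$ to have a singular point $P$ of a sufficiently bad type; placing $P$ at $[1:0:0:0]$ and normalizing the equation as in Lemma~\ref{lemma:polynomial-form}, the projection from $P$ exhibits an explicit cylinder $S\setminus(H\cup L_1\cup\cdots\cup L_r)$, where the removed locus is cut out by the degree-$d$ hypersurface $y\,g(y,z,w)=0$ (with $z=0$ replacing it in one degree-one subcase). The pencil is then generated by $S$ and this very hypersurface, and --- this is the decisive point --- its generic member $S_\eta$ is again a hypersurface of the same explicit shape, singular at $[1:0:0:0]$, so the \emph{same} projection argument produces a cylinder in $S_\eta$ over $\Bbbk(\lambda)$, after which \cite[Lemma~3]{DuKi16} applies. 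To salvage your version you must either redo the construction directly on the generic member, as the paper does, or supply an actual mechanism propagating your chosen cylinder to $S_\eta$; the latter is not available in this generality. (Separately, your closing claim that the $(-K_S)$-polar hypothesis is also necessary is only known for $K_S^2=3$; for degrees $1$ and $2$ the converse is stated in the paper as a conjecture.)
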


Before we proceed the proof, let us first prepare setups for the proof.
Due to  Theorem~\ref{theorem:KS-polar} above the surface $S$ contains a singular point $P$. Furthermore, we may assume that the singular point $P$ is not of type $\mathrm{A}_1$ if the anticanonical degree of $S$ is $2$; it is not of types $\mathrm{A}_1$, $\mathrm{A}_2$, $\mathrm{A}_3$, $\mathrm{D}_4$ if  the anticanonical degree  is $1$. By suitable coordinate changes, we may assume that the point $P$ is located at $[1:0:0:0]$.
Under such conditions, we immediately observe the following:

\begin{lemma}
\label{lemma:polynomial-form}
Under the condition above, the surface $S$ is defined  in $\mathbb{P}$ by one quasi-homogenous equation of the following types:
\begin{itemize}
\item Case $K_S^2=3$.
\begin{equation}\label{eq:degree3}
xf_2(y,z,w)+f_{3}(y,z,w)=0,
\end{equation}
where $f_2(y,z,w)$ and $f_3(y,z,w)$ are homogenous polynomials of degrees $2$ and $3$.
\item Case $K_S^2=2$.
\begin{equation}\label{eq:degree2}
w^2+x\left(ayw+f_3(y,z)\right)+f_{4}(y,z)=0,
\end{equation}
where $f_3(y,z)$ and $f_4(y,z)$ are quasi-homogenous polynomials of degrees $3$ and $4$, respectively, and $a$ is a  constant.
\item Case $K_S^2=1$.
\begin{equation}\label{eq:degree1-y}
 w^2+x\left(ay^2w+f_5(y,z)\right)+f_{6}(y,z)=0
\end{equation}
or
\begin{equation}\label{eq:degree1-z}
w^2+x\left(zw+f_5(y,z)\right)+f_{6}(y,z)=0,
\end{equation}
where $f_5(y,z)$ and $f_6(y,z)$ are quasi-homogenous polynomials of degrees $5$ and $6$, respectively, and $a$ is a constant.
\end{itemize}
\end{lemma}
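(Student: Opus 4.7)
The lemma is a normal-form statement, and the plan is to handle it case by case on the three possible values of $K_{S}^{2}$. In each case I would first write down the most general weighted-homogeneous defining polynomial $F$ of $S$ of the appropriate degree in $\mathbb{P}$, expanded as a polynomial in the largest-weight variable $w$; then apply the conditions that $P=[1:0:0:0]$ lies on $S$ and is a singular point of $S$ to eliminate the relevant monomials; and finally use graded automorphisms of the ambient weighted projective space $\mathbb{P}$ fixing $P$---most importantly the graded substitutions $w\mapsto w+q(x,y,z)$ with $q$ weighted-homogeneous of weight $\deg w$, together with linear changes among the lower-weight variables---to reach the claimed normal form.

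The cubic case $K_{S}^{2}=3$ is essentially immediate: for a general cubic in $\mathbb{P}^{3}$, the conditions $F(P)=\partial_{y}F(P)=\partial_{z}F(P)=\partial_{w}F(P)=0$ eliminate precisely the monomials $x^{3},x^{2}y,x^{2}z,x^{2}w$, leaving only monomials of $x$-degree at most $1$, which is exactly \eqref{eq:degree3}. In the degree-$2$ case, write $F=c_{1}w^{2}+L(x,y,z)w+Q(x,y,z)$ with $\deg L=2$ and $\deg Q=4$; quasismoothness of $S$ as a hypersurface in $\mathbb{P}(1,1,1,2)$ forces $c_{1}\neq 0$ (otherwise the affine cone of $S$ would contain the whole $w$-axis), so we may normalize $c_{1}=1$. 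Singularity of $P$ annihilates the $x^{2}$-monomial of $L$ and the $x^{4},x^{3}y,x^{3}z$-monomials of $Q$, leaving $L=\alpha xy+\beta xz+q_{2}(y,z)$. A single $w$-shift $w\mapsto w+\tfrac{1}{2}\bigl(\beta xz+q_{2}(y,z)\bigr)$ removes the $xz$ and $q_{2}(y,z)$ pieces of $L$ while modifying $Q$ compatibly. The tangent cone at $P$ is then $w^{2}+\alpha yw+A_{2}(y,z)$ for some binary quadratic $A_{2}(y,z)$ extracted from $Q$; the hypothesis that $P$ is not of type $A_{1}$ forces this cone to have rank at most $2$, i.e.\ to have a one-dimensional kernel, and aligning this kernel direction via a further graded change of coordinates eliminates the $x^{2}A_{2}(y,z)$ contribution to $Q$, producing the normal form \eqref{eq:degree2} with $a=\alpha$.

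The degree-$1$ case proceeds analogously, but with the richer combinatorics of $\mathbb{P}(1,1,2,3)$. We again write $F=c_{1}w^{2}+Lw+Q$ with $\deg L=3$ and $\deg Q=6$, normalize $c_{1}=1$ by the same quasismoothness argument, and kill $x^{3}\in L$ and $x^{6},x^{5}y,x^{4}z\in Q$ by singularity at $P$. A weight-$3$ shift $w\mapsto w+q(x,y,z)$ can absorb several monomials of $L$---such as $x^{2}y,xy^{2},y^{3},yz$---into $Q$, leaving $L$ essentially in the form $\alpha xy^{2}+\beta xz$ plus a residue. The tangent cone of $S$ at $P$, computed in the local chart $x=1$, then has the shape $w^{2}+(\text{linear in }y,z)\cdot w+(\text{quadratic in }y,z)$; since $P$ is forbidden to be of type $A_{1},A_{2},A_{3},D_{4}$, the Du Val classification permits only two regimes. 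If the tangent cone has rank $2$ with two distinct linear factors (an $A_{n}$ point with $n\geqslant 4$), then the $xz$-part of $L$ survives and the $xy^{2}$-part vanishes, yielding \eqref{eq:degree1-z}; if instead the cone has rank at most $1$ (a $D_{n}$ point with $n\geqslant 5$, or an $E_{n}$ point), then the $xz$-part vanishes and the $xy^{2}$-part survives with coefficient $a$, yielding \eqref{eq:degree1-y}. The main technical obstacle is precisely this final Du Val dichotomy: organizing the weighted $w$-shifts in $\mathbb{P}(1,1,2,3)$ so as not to disturb already-normalized monomials, while extracting exactly these two forms, requires careful bookkeeping of how each available monomial in $L$ and $Q$ transforms, and it is the exclusion of the mildest Du Val types that makes the dichotomy clean.
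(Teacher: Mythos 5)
Your overall strategy --- expand the general weighted-homogeneous form, impose that $P=[1:0:0:0]$ is a singular point, and then normalize by the graded substitutions $w\mapsto w+q(x,y,z)$, $z\mapsto z+q'(x,y)$, $x\mapsto x+\nu y$ and scalings --- is surely what the authors intend (their own proof is the single sentence ``This is easy to check''), and your treatment of the cases $K_S^2=3$ and $K_S^2=2$ is essentially complete and correct. Two small points there: a singular Du Val del Pezzo is \emph{not} quasi-smooth as a hypersurface (its Du Val points are singularities of the affine cone away from the vertex), so the nonvanishing of the $w^2$-coefficient should instead be justified by the structure of the anticanonical model (e.g.\ otherwise $[0:0:0:1]\in S$ and the singularity there is not Du Val); and in degree $2$ the reason the rank-$\leqslant 2$ quadratic part can be brought to $w^2+ayw$ is that it factors as $(w+l_1)(w+l_2)$ with both factors containing $w$, so that the graded shift $w\mapsto w+xl_1$ followed by a linear rotation of $(y,z)$ does the job --- this uses that $y$ and $z$ have the same weight, which is what makes ``aligning the kernel'' legitimate.

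The genuine gap is in the case $K_S^2=1$, which you flag as ``careful bookkeeping'' and then defer; but that bookkeeping is where the excluded singularity types actually get used, and two steps would fail as written. First, in the rank-$2$ branch ($A_n$, $n\geqslant 4$) you assert the surviving linear factor can be taken to be $z$; but if the degenerate quadratic part of $F(1,y,z,w)$ does not involve $z$, the reduction leaves a $w$-coefficient proportional to $x^2y$, a monomial occurring in neither \eqref{eq:degree1-y} nor \eqref{eq:degree1-z}, and no graded substitution turns $x^2y$ into $xz$ or $xy^2$ (one cannot shift $y$ by $z$, the weights being different). One must show this cannot happen: the restriction of $F(1,y,z,w)$ to the kernel line $y=w=0$ is then $sz^3$ (there is no pure $z^k$ sextic monomial with $k\geqslant 4$), and a short reduction shows $P$ would be of type $A_2$, $A_3$, or non-isolated --- this is exactly where the exclusion of $A_2$ and $A_3$ enters, and it is absent from your argument. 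Second, the tangent-cone analysis controls only the $2$-jet, hence only the monomials $x^4y^2$, $x^3yz$, $x^2z^2$, $x^2yw$, $xzw$; the remaining forbidden monomials $x^3y^3$, $x^2y^2z$, $x^2y^4$ are untouched. Removing $x^3y^3$ and $x^2y^2z$ amounts to making the local cubic part $C_3(y,z)$ divisible by $z^2$: the exclusion of $D_4$ gives $C_3$ a repeated linear factor, but since only $z\mapsto z+cxy$ (not $y\mapsto y+cz$) is allowed, you must also rule out that the repeated factor is $y$ (if $C_3$ is divisible by $y^2$ and the quadratic part is $w^2$, every pure-$z$ monomial vanishes and $S$ is singular along the curve $y=w=0$, contradicting that $P$ is Du Val); and killing $x^2y^4$ needs a final shift $w\mapsto w+dxy^2$ with $d$ a root of a quadratic $d^2+ad+e=0$. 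None of these steps appears in your write-up, and they are precisely where the hypotheses on the singularity type and the asymmetry of the weights carry the load.
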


\begin{proof}
This is easy to check.
\end{proof}

Let $\pi:S\dasharrow \Pi$ be the projection from the point $P$ to the hyperplane $\Pi$ defined by~$x=0$ in $\mathbb{P}$, i.e., $\pi([x:y:z:w])=[y:z:w]$. The hyperplane $\Pi$ is isomorphic to $\mathbb{P}^2$, $\mathbb{P}(1,1,2)$, $\mathbb{P}(1,2,3)$ according to the anticanonical degrees, $3$, $2$, $1$, respectively. We denote by $g(y,z,w)$ the coefficient  quasi-homogenous polynomial of $x$ in each of the quasi-homogenous equations of \eqref{eq:degree3}, \eqref{eq:degree2}, \eqref{eq:degree1-y} and  \eqref{eq:degree1-z}, i.e.,   for the case $K_S^2=1$
 $$g(y,z,w)=ay^2w \ (\mbox{or }zw) +f_5(y,z),$$ for the case $K_S^2=2$ $$g(y,z,w)=ayw+f_3(y,z)$$ and  for the case $K_S^2=3$
$$g(y,z,w)=f_2(y,z,w).$$
Let $D$ be the divisor on $S$ cut by the equation $g(y,z,w)=0$.
In the case of $K_S^2=3$, the divisor $D$ consists of the lines in $S$ that pass through $P$. There are at most six such lines and they are defined by the system of homogenous equations
$$
\left\{\aligned%
&g(y,z,w)=0\\
&f_3(y,z,w)=0\\
\endaligned
\right.
$$
in $\mathbb{P}^3$.
In case of of $K_S^2=2$, $D$ consists of at most six curves passing through the point~$P$. They are defined by the system of quasi-homogeneous equations
$$
\left\{\aligned%
&g(y,z,w)=0\\
&w^2+f_4(y,z)=0\\
\endaligned
\right.
$$
in $\mathbb{P}(1,1,1,2)$.
Finally, in case of of $K_S^2=1$,
it consists of at most five curves passing through the point~$P$, which are defined by the system of quasi-homogeneous equations
$$
\left\{\aligned%
&g(y,z,w)=0\\
&w^2+f_6(y,z)=0\\
\endaligned
\right.
$$
in $\mathbb{P}(1,1,2,3)$. In each case, the number of curves in $D$ is completely determined by the number of points determined by the system of quasi-homogeneous equations on $\Pi$. Denote these curves by $L_1,\ldots, L_r$ in each case.
The map $\pi$ contracts each curve $L_i$ to a point on $\Pi$.
\medskip

Now we are ready to prove Theorem~C.

\bigskip
\emph{Proof of Theorem~C.}
\medskip

Lemma~\ref{lemma:polynomial-form} immediately implies that  the projection $\pi$ is a birational map.
Moreover, it induces an isomorphism
$$\tilde{\pi}: S\setminus \left(L_1\cup\cdots\cup L_r\right)\to \mathrm{Im}(\tilde{\pi}) \subset\Pi.$$
Let $\mathcal{C}$ be the curve on $\Pi$ defined by
$$g(y,z,w)=0.$$
Note that this can be reducible or non-reduced.

\medskip

\emph{Claim 1.} If $K_S^2=3$, then $\mathrm{Im}(\tilde{\pi})=\Pi\setminus \mathcal{C}$ and  there is an hyperplane section $H$ of $S$ such that $S\setminus\left( H\cup L_1\cup\cdots\cup L_r\right)$ is an $\mathbb{A}^1$-cylinder.
\medskip

Let $\varphi\colon\overline{S}\to S$ be the blow up at the point $P$.
Then there exists a commutative diagram
$$
\xymatrix{
&\overline{S}\ar@{->}[ld]_{\varphi}\ar@{->}[rd]^{\phi}&\\%
S\ar@{-->}[rr]^{\pi}&&\Pi}
$$ %
where $\pi$ is the linear projection from the point $P$,
and $\phi$ is the birational morphism that contracts exactly  the proper transforms of the lines $L_1,\ldots, L_r$.
Let $E$ be the exceptional divisor of the blow up $\varphi$.
Then the image of the curve $E$ by $\phi$ in $\Pi$ is the conic curve $\mathcal{C}$. Then $\mathcal{C}$ contains all the points $\pi(L_i)$.
If $P$ is an ordinary double point of the surface $S$, then $\mathcal{C}$ is a smooth conic.
Moreover, if $P$ is a singular point of the surface $S$ of type $\mathrm{A}_n$ for $n\geqslant 2$, then $\mathcal{C}$ consists of two distinct lines.
Finally, if $P$ is either of type $\mathrm{D}_n$ or of type $\mathrm{E}_6$, then $\mathcal{C}$ is a double line.
\medskip

If $\mathcal{C}$ is smooth, let $\ell$ be a general line in $\Pi$ that is tangent to $\mathcal{C}$.
If $\mathcal{C}$ is singular, let $\ell$ be a general line in $\Pi$ that passes through a singular point of the conic $\mathcal{C}$.
By a suitable coordinate change, we assume that $\ell$ is defined by the equation $y=0$ on $\Pi$. Let $H$ be the divisor on $S$ cut by the equation $y=0$ in $\mathbb{P}^3$.

Then
$$
S\setminus\left(H\cup L_1\cup\cdots\cup L_r\right)\cong\Pi\setminus\left(\mathcal{C}\cup\ell\right)\cong\left(\mathbb{A}^1\setminus\left\{0\right\}\right)\times\mathbb{A}^1,
$$
so that $S\setminus\left(H\cup L_1\cup\cdots\cup L_r\right)$ is an $\mathbb{A}^1$-cylinder.
\medskip

For the rest of cases we also denote by $\ell$ the curve defined by $y=0$ on $\Pi$.  In addition let $H$ be the divisor on $S$ cut  by $y=0$ in $\mathbb{P}$.
\medskip

\emph{Claim 2.} If $K_S^2=2$ or if $K_S^2=1$ and the surface $S$ is defined by a quasi-homogenous equation of
type~\eqref{eq:degree1-y}, then $S\setminus\left(H\cup L_1\cup\cdots\cup L_r\right)$ is  an $\mathbb{A}^1$-cylinder.
\medskip

As in the case $K_S^2= 3$, the isomorphism $\tilde{\pi}$ maps $S\setminus\left(H\cup L_1\cup\cdots\cup L_r\right)$ onto $\Pi\setminus\left(\mathcal{C}\cup\ell\right)$. Meanwhile the projection $\pi$ maps $S\setminus H$ onto $\Pi\setminus \ell\cong\mathbb{A}^2$. Therefore the affine variety
$S\setminus\left(H\cup L_1\cup\cdots\cup L_r\right)$ is isomorphic to the complement of the curve defined by $$aw+f_3(1,z)=0$$ for type~\eqref{eq:degree2}
 (resp. $aw+f_5(1,z)=0$ for type~\eqref{eq:degree1-y}) in $\Pi\setminus \ell\cong\mathbb{A}^2$. This immediately implies the claim.

\medskip

\emph{Claim 3.}  If $K_S^2=1$ and the surface $S$ is defined by a quasi-homogenous equation of
type~\eqref{eq:degree1-z}, let $H_z$ be the hyperplane section of $S$ cut by the equation $z=0$. Then $S\setminus \left(H_z\cup L_1\cup\cdots\cup L_r\right)$ is  an $\mathbb{A}^1$-cylinder. In particular, $S\setminus H_z$ contains an open $\mathbb{A}^1$-cylinder.

 \medskip

In this case, the isomorphism $\tilde{\pi}$ maps $S\setminus\left(H_z\cup L_1\cup\cdots\cup L_r\right)$ onto  $\Pi\setminus\left(\mathcal{C}\cup\ell_z\right)$, where~$\ell_z$  is the curve on $\Pi$ defined by $z=0$.  The projection $\pi$ maps $S\setminus H_z$ onto $\Pi\setminus\ell_z$. The affine variety $\Pi\setminus\ell_z$ is isomorphic to the quotient space $\mathbb{A}^2 / \boldsymbol\mu_2$, where the $\boldsymbol\mu_2$-action is given by $(y,w) \mapsto (-y,-w)$. Notice that we abuse our notations $y$  and $w$. The affine open subset $\Pi\setminus\left(\mathcal{C}\cup\ell_z\right)$ is an  $\mathbb{A}^1$-cylinder. To see this,  notice that $\Pi\setminus\left(\mathcal{C}\cup\ell_z\right)$  is
the complement of the image of the curve defined by $$w+f_5(y,1)=0$$ in $\Pi\setminus \ell_z\cong\mathbb{A}^2/ \boldsymbol\mu_2$.
Since $f_5(y,1)$ is an odd polynomial in $y$, the isomorphism $\psi :\mathbb{A}^2\to\mathbb{A}^2$ defined by $(y_1,w_1)=\psi(y, w)=(y, w+f_5(y,1))$ is $ \boldsymbol\mu_2$-equivariant for the action $(y_1,w_1)\mapsto (-y_1,-w_1)$ so that we have a $ \boldsymbol\mu_2$-equivariant diagram
$$
\xymatrix{
\mathbb{A}^2\ar@{->}[rd]\ar@{->}[rr]^{\psi}&& \mathbb{A}^2\ar@{->}[dl]\\%
&\mathbb{A}^2/\boldsymbol\mu_2&},
$$ %
where the morphisms from $\mathbb{A}^2$ to $\mathbb{A}^2/\boldsymbol\mu_2$  are the respective quotient maps. This shows that
 $\Pi\setminus\left(\mathcal{C}\cup\ell_z\right)$  is isomorphic to  the complement in $\mathbb{A}^2/ \boldsymbol\mu_2$ of the image of the curve defined by $w_1=0$ . Letting $s=w_1^2$, $t=y_1^2$ and $u=w_1y_1$, $\mathbb{A}^2/ \boldsymbol\mu_2$ is isomorphic the affine variety defined by the equation  $st=u^2$ in $\mathbb{A}^3$. The image in  $\mathbb{A}^2/ \boldsymbol\mu_2$ of curve defined by $w_1=0$ coincides with the curve defined by $s=u=0$. Its complement is thus isomorphic to $\mathbb{A}^1\setminus\{0\}\times \mathbb{A}^1$, which is an $\mathbb{A}^1$-cylinder as desired.
\medskip

From now on, let $\mathcal{H}$  (resp. $\mathcal{G}$) be the hyperplane defined by $y=0$ (resp. $z=0$)  in $\mathbb{P}$ and  let $\mathcal{Q}$ be the (weighted) hypersurface (possibly reducible or non-reduced) defined by $g(y,z,w)=0$ in $\mathbb{P}$.

We first consider the surface $S$ dealt in Claims~1 and ~2.

From Claims~1 and 2 we obtain the property that
$$
S\setminus\left(\mathcal{H}\cup\mathcal{Q}\right)\cong \Pi\setminus\left(\mathcal{C}\cup\ell\right)
$$
is an $\mathbb{A}^1$-cylinder. In addition we suppose that the weighted surface $S$ is defined by $f(x,y,z,w)=0$ in $\mathbb{P}$. The equation must be one of the types in Lemma~\ref{lemma:polynomial-form}.

The pencil on $\mathbb{P}$ generated by the surfaces $S$ and $\mathcal{H}+\mathcal{Q}$  consists of (weighted) surfaces with equations of the form
$$
\alpha f(x,y,z,w)+\beta yg(y,z,w)=0,
$$
where $[\alpha:\beta]\in\mathbb{P}^{1}$.
This pencil gives a rational map $\rho\colon\mathbb{P}\dashrightarrow\mathbb{P}^{1}$.
Its generic fiber $S_{\eta}$ is the weighted surface defined by equation
$$
f(x,y,z,w)+\lambda yg(y,z,w)=0
$$
in the corresponding weighted projective space $\mathbb{P}_{\Bbbk(\lambda)}$
over the field $\Bbbk(\lambda)$, where $\lambda$ is a parameter.
It is singular at the point $[1:0:0:0]$,
and the intersection of $S_{\eta}$ with the surface in $\mathbb{P}_{\Bbbk(\lambda)}$ given by $f(x,y,z,w)=0$ consists of  the $\Bbbk(\lambda)$-rational hyperplane section
$$
\mathcal{H}_{\eta}=S_{\eta}\cap\{y=0\}
$$
and the $\Bbbk(\lambda)$-rational (weighted) hypersurface
section
$$
\mathcal{Q}_{\eta}=S_{\eta}\cap\{g(y,z,w)=0\}.
$$
As in the case of $S\setminus (\mathcal{H}\cup\mathcal{Q})$,
the complement $S_{\eta}\setminus (\mathcal{H}_{\eta}\cup\mathcal{Q}_{\eta})$ is an $\mathbb{A}^1$-cylinder in $S_{\eta}$,
defined over the field $\Bbbk(\lambda)$, and hence $\mathbb{P}\setminus S$ contains an open $\mathbb{A}^1$-cylinder (see \cite[Lemma 3]{DuKi16}).

Now we consider the surface $S$ defined by a quasi-homogenous equation of type~\eqref{eq:degree1-z}. Letting $S_{\eta}$ be the generic member of the pencil of hypersurfaces of degree $6$ generated by $S$ and $3\mathcal{G}$, we deduce from Claim~3 in the same way as in the previous cases that the complement of $S_{\eta}\setminus \mathcal{G}$ contains an open $\mathbb{A}^1$-cylinder defined over the field $\Bbbk(\lambda)$, hence that $\mathbb{P}\setminus S$ contains an open $\mathbb{A}^1$-cylinder.
\qed

\begin{corollary}
\label{corollary:del-Pezzo}
If $S$ contains a $(-K_S)$-polar cylinder, then $\mathrm{Aut}(\mathbb{P}\setminus S)\ne \mathrm{Aut}(\mathbb{P},S)$.
\end{corollary}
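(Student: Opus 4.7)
The plan is to combine Theorem~C with \cite[Proposition~2]{DuKi15} (already exploited in the proof of Corollary~\ref{cor:nocylinder}) in order to produce an infinite-dimensional family of $\mathbb{G}_a$-actions on $\mathbb{P}\setminus S$, which cannot all fit inside the finite-dimensional algebraic group $\mathrm{Aut}(\mathbb{P},S)\subseteq\mathrm{Aut}(\mathbb{P})$.

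First, Theorem~C supplies an open $\mathbb{A}^1$-cylinder $U\cong B\times\mathbb{A}^1$ inside $\mathbb{P}\setminus S$, with $\dim B=2$. Since the divisor class group $\mathrm{Cl}(\mathbb{P}\setminus S)\cong\mathbb{Z}/d\mathbb{Z}$ is finite, \cite[Proposition~2]{DuKi15} promotes this cylinder structure to a nontrivial $\mathbb{G}_a$-action $\sigma$ on $\mathbb{P}\setminus S$ whose general orbits are the $\mathbb{A}^1$-fibers of the projection $U\to B$. Let $\partial\in\mathrm{Der}_{\mathbb{C}}(\mathcal{O}(\mathbb{P}\setminus S))$ be the corresponding locally nilpotent derivation.

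For every invariant $f\in\ker\partial$, the replica $f\partial$ is again locally nilpotent (one checks by induction that $(f\partial)^n=f^n\partial^n$) and therefore integrates to a one-parameter subgroup $\sigma_f\subseteq\mathrm{Aut}(\mathbb{P}\setminus S)$. Since $B$ has positive dimension, the kernel $\ker\partial$ is infinite-dimensional over $\mathbb{C}$; and because $\mathcal{O}(\mathbb{P}\setminus S)$ is a domain with $\partial\neq 0$, distinct invariants $f$ yield distinct derivations $f\partial$. The collection $\{f\partial:f\in\ker\partial\}$ thus spans an infinite-dimensional subspace of algebraic vector fields on $\mathbb{P}\setminus S$, each of them generating a $\mathbb{G}_a$-subgroup of $\mathrm{Aut}(\mathbb{P}\setminus S)$.

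On the other hand, in each of the three cases $\mathbb{P}\in\{\mathbb{P}^3,\mathbb{P}(1,1,1,2),\mathbb{P}(1,1,2,3)\}$, the group $\mathrm{Aut}(\mathbb{P})$ is a finite-dimensional linear algebraic group, so its closed subgroup $\mathrm{Aut}(\mathbb{P},S)$ is again a finite-dimensional algebraic group with finite-dimensional Lie algebra. The equality $\mathrm{Aut}(\mathbb{P}\setminus S)=\mathrm{Aut}(\mathbb{P},S)$ would then force the infinite-dimensional family $\{f\partial\}$ of infinitesimal one-parameter subgroups to embed into a finite-dimensional Lie algebra, which is absurd. The main delicate point I anticipate is verifying that the replica derivations $f\partial$ genuinely integrate to globally defined $\mathbb{G}_a$-actions on the whole of $\mathbb{P}\setminus S$ (rather than just on the cylinder $U$) and produce linearly independent infinitesimal one-parameter subgroups of $\mathrm{Aut}(\mathbb{P}\setminus S)$; both are classical features of locally nilpotent derivations on affine varieties but must be invoked with some care to translate the infinite-dimensionality of $\ker\partial$ into a statement incompatible with the algebraicity of $\mathrm{Aut}(\mathbb{P},S)$.
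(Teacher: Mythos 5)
Your first two steps coincide with the paper's: Theorem~C produces an open $\mathbb{A}^1$-cylinder in $\mathbb{P}\setminus S$, and since $\mathrm{Cl}(\mathbb{P}\setminus S)\cong\mathbb{Z}/d\mathbb{Z}$ is finite, \cite[Proposition~2]{DuKi15} converts it into a nontrivial $\mathbb{G}_a$-action, exactly as in the proof of Corollary~\ref{cor:nocylinder}. Where you diverge is the endgame. The paper stops after producing a single $\mathbb{G}_a$-action and concludes at once on the grounds that $\mathrm{Aut}(\mathbb{P},S)$ is a finite group, so it cannot equal a group containing infinitely many automorphisms. You instead take the associated locally nilpotent derivation $\partial$, form the replicas $f\partial$ for $f\in\ker\partial$, and argue that the resulting infinite-dimensional family of one-parameter subgroups cannot sit inside the finite-dimensional algebraic group $\mathrm{Aut}(\mathbb{P},S)$. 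Your route is heavier, but it buys genuine robustness: it only uses that $\mathrm{Aut}(\mathbb{P},S)$ is a closed subgroup of the finite-dimensional group $\mathrm{Aut}(\mathbb{P})$, not that it is finite. This is a real gain, since finiteness of $\mathrm{Aut}(\mathbb{P},S)$ is not obvious for arbitrary Du~Val del Pezzo hypersurfaces to which the corollary applies --- for instance the toric cubic $xyz=w^3$ (three $\mathrm{A}_2$ points, hence covered by Theorem~\ref{theorem:KS-polar}) is preserved by a two-dimensional torus in $\mathrm{PGL}_4(\mathbb{C})$, so the literal justification given in the paper does not apply there, whereas your dimension count still does. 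The ingredients you use are all standard and correct: replicas of a locally nilpotent derivation on an affine domain are locally nilpotent and integrate to global $\mathbb{G}_a$-actions, $\ker\partial$ has transcendence degree $2$ here and is therefore infinite-dimensional over $\mathbb{C}$, and $f\mapsto f\partial$ is injective. The one point you rightly flag and should state explicitly to close the argument is that, under the assumed equality of automorphism groups, each homomorphism $\mathbb{G}_a\to\mathrm{Aut}(\mathbb{P},S)$ obtained by extending $\sigma_f$ is a morphism of algebraic groups (the extension from the dense open subset $\mathbb{P}\setminus S$ is unique and algebraic in the parameter), so that its infinitesimal generator lands in the finite-dimensional Lie algebra of $\mathrm{Aut}(\mathbb{P})$, which injects into the vector fields on $\mathbb{P}\setminus S$; this is routine but is the hinge of the contradiction.
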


\begin{proof}
Suppose that $S$ contains a $(-K_S)$-polar cylinder. Using Theorems~\ref{theorem:KS-polar} and~C,
we see that the affine Fano variety $X\setminus S$ contains an open $\mathbb{A}^1$-cylinder.
Then we conclude in the same way as in the proof of Corollary~\ref{cor:nocylinder}
that the existence of an open $\mathbb{A}^1$-cylinder in $\mathbb{P}\setminus S$ implies the existence of a nontrivial action of
$\mathbb{G}_a$ on $\mathbb{P}\setminus S$. This implies that $\mathrm{Aut}(\mathbb{P}\setminus S)\ne\mathrm{Aut}(\mathbb{P},S)$, because the right hand side is a finite group.
\end{proof}

\begin{remark} The proof of Theorem~C reproves \cite[Theorem~1.5]{CPW16} in a uniform manner. The original proof for anticanonical degrees $1$ and $2$ in \cite{CPW16} has been given in a case-by-case way.

\end{remark}

Combined with Corollary \ref{cor:nocylinder}, Theorem~C fully settles the question of existence of open $\mathbb{A}^1$-cylinders in the complements to cubic surfaces with at worst Du Val singularities. Since the cubic surface $S$ has a $(-K_{S})$-polar cylinder if and only if $S$ is singular,  it follows from Corollary~\ref{cor:nocylinder} and Theorem~C
that the following two conditions are equivalent:
\begin{itemize}
\item The affine Fano variety $\mathbb{P}\setminus S$ contains an open $\mathbb{A}^1$-cylinder;
\item The cubic surface $S$ contains a $(-K_{S})$-polar cylinder.
\end{itemize}
It is natural to expect that the same holds in the cases of lower anticanonical degrees.
\begin{conjecture}
Let $S$ be a del Pezzo surface of anticanonical degree at most $2$ with at worst Du~Val singularities.
The affine Fano variety $\mathbb{P}\setminus S$ contains an open $\mathbb{A}^1$-cylinder if and only if  $S$ contains a $(-K_{S})$-polar cylinder.
\end{conjecture}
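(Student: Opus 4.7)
The plan is to extend the strategy of Corollary~\ref{cor:nocylinder} to mildly singular del Pezzo surfaces. One direction is immediate from Theorem~C, so the task reduces to the converse: assuming, in view of Theorem~\ref{theorem:KS-polar}, that $K_S^2=2$ and $S$ has at worst $\mathrm{A}_1$ singularities, or that $K_S^2=1$ and $S$ has at worst singularities of types $\mathrm{A}_1,\mathrm{A}_2,\mathrm{A}_3,\mathrm{D}_4$, I want to show that $\mathbb{P}\setminus S$ admits no open $\mathbb{A}^1$-cylinder. Since $S$ is ample, the divisor class group $\mathrm{Cl}(\mathbb{P}\setminus S)\cong\mathrm{Cl}(\mathbb{P})/\langle S\rangle$ remains a finite cyclic group, so that by \cite[Proposition~2]{DuKi15} the existence of an open $\mathbb{A}^1$-cylinder in $\mathbb{P}\setminus S$ would force a nontrivial $\mathbb{G}_a$-action on $\mathbb{P}\setminus S$ whose general orbits are the cylinder fibers.

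Next, I would transport this $\mathbb{G}_a$-action to the cyclic cover $\pi\colon V\to\mathbb{P}$ of degree $d\in\{4,6\}$ branched along $S$ that was already used in the proof of Theorem~\ref{theorem:smoothdP}. Because $\pi|_{V\setminus H}\colon V\setminus H\to\mathbb{P}\setminus S$ is \'etale and $\mathbb{G}_a$ is simply connected and unipotent, the action lifts uniquely to a nontrivial $\mathbb{G}_a$-action on $V\setminus H$. Since $\pi$ is \'etale away from $S$, the singularities of $V$ are exactly the singularities of $S$, so $V$ is a Fano threefold of index~$2$ (a quartic in $\mathbb{P}(1,1,1,1,2)$ or a sextic in $\mathbb{P}(1,1,1,2,3)$) with the same restricted list of Du Val singularities. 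Applying Rosenlicht's theorem to the $\mathbb{G}_a$-action, $V$ is birational to $Z\times\mathbb{P}^1$ for some surface $Z$, so in particular $V$ is birationally ruled.

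The decisive step, which I expect to be the principal obstacle, is to rule out that such a singular Fano threefold $V$ is birationally ruled. In the smooth situation this is known: non-rationality of the smooth quartic double solid comes from Voisin \cite[Corollaire~4.8]{Voi88}, and for the smooth sextic in $\mathbb{P}(1,1,1,2,3)$ Grinenko's theorem $\mathrm{Bir}(V)=\mathrm{Aut}(V)$ from \cite{Grinenko1,Grinenko2} prevents any Mori fiber space structure other than~$V$ itself. For the singular cases I would try to adapt these arguments. For $K_S^2=2$, the threefold $V$ is a nodal quartic double solid, and the method of Clemens--Griffiths together with intermediate Jacobian techniques, or birational rigidity results for nodal index-$2$ Fano threefolds, should show non-ruledness under sufficiently mild singularities. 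For $K_S^2=1$, I would extend Grinenko's maximal singularity analysis to the restricted Du Val types, ultimately showing via the classical Noether--Fano method that no birational map from $V$ to a Mori fiber space $Z\times\mathbb{P}^1$ can exist.

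If some singular cases resist this direct attack, a complementary strategy would be to work on $\mathbb{P}$ itself via log MMP on the pair $(\mathbb{P},S)$. An $\mathbb{A}^1$-cylinder would yield a birational model $(\mathbb{P}',S')$ together with a Mori fiber space structure, and by running $(K_\mathbb{P}+S+\epsilon\mathcal{M})$-MMP for an appropriate mobile linear system $\mathcal{M}$ one could try to exhibit a maximal center whose existence is prohibited by the restricted singularity type of $S$ (much as in Theorem~A); the input would be a log-canonical-threshold bound compatible with the Du Val types allowed by Theorem~\ref{theorem:KS-polar}. Either approach, however, boils down to proving a suitable birational rigidity statement for the cyclic cover $V$ with prescribed mild singularities, and this is where the essential technical work lies.
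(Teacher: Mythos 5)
The statement you are trying to prove is stated in the paper only as a conjecture: the paper offers no proof, and in the remark following it (Remark~\ref{remark:Voisin}) the authors explain that the obstacle is exactly the one you run into. Your reduction is the right one and matches the paper's treatment of the smooth case: the ``if'' direction is Theorem~C, and for the converse the finiteness of $\mathrm{Cl}(\mathbb{P}\setminus S)$, \cite[Proposition~2]{DuKi15}, the lift of the $\mathbb{G}_a$-action to the cyclic cover $V\setminus H$, and Rosenlicht's theorem together show that an open $\mathbb{A}^1$-cylinder would force $V$ to be birational to $Z\times\mathbb{P}^1$ with $Z$ a rational surface, hence force $V$ to be rational. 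But the decisive step --- non-rationality of the \emph{singular} cyclic cover $V$ --- is precisely what you leave open, and it is precisely what is open: the paper states that the rationality problem for the relevant singular quartic double solids has, to the authors' knowledge, never been addressed, and only suggests that one might ``a priori'' adapt \cite{Voi88,CPS2015}. So what you have is a correct reduction of the conjecture to a non-rationality (or birational rigidity) statement for singular index-$2$ Fano threefolds, not a proof of the conjecture.

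There is also a concrete error in your intermediate step: the claim that ``the singularities of $V$ are exactly the singularities of $S$'' is false. The cover $\pi$ is \'etale only away from the branch divisor, and over a singular point of $S$ the local structure of $V$ is strictly worse; for instance, when $K_S^2=2$ and $S$ has an ordinary double point, the corresponding quartic double solid $V$ acquires a singular point of type $\mathrm{A}_3$, not $\mathrm{A}_1$ (this is spelled out in Remark~\ref{remark:Voisin}). This matters, because the non-rationality input you would need is for Fano threefolds with these worse singularities, where neither Voisin's intermediate Jacobian argument nor Grinenko's rigidity theorems apply as stated, and nodal-case results do not cover $\mathrm{A}_3$ points. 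Until that non-rationality statement is established, both of your proposed routes (the cover argument and the log MMP/maximal singularity argument on $(\mathbb{P},S)$) remain programmes rather than proofs.
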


\begin{remark}
\label{remark:Voisin}
Using the idea of the proof of Theorem~\ref{theorem:smoothdP},
one can try to prove that the affine Fano variety $\mathbb{P}\setminus S$ does not contain open $\mathbb{A}^1$-cylinders
provided that $K_S^2\leqslant 2$ and $S$ does not contain $(-K_S)$-polar cylinders.
The crucial difference in this case is that the threefold $V$ in the proof of Theorem~\ref{theorem:smoothdP}
would no longer be smooth, because $S$ may be singular.
Hence, we cannot use \cite{Grinenko1,Grinenko2} and \cite[Corollaire~4.8]{Voi88} anymore.
For instance, if $K_S^2=2$ and $S$ is a singular quartic surface in $\mathbb{P}(1,1,1,2)$ that has at most ordinary double points,
then the threefold $V$ is a \emph{singular} double cover of $\mathbb{P}^3$ ramified in a quartic surface,
which has singular points of type $\mathrm{A}_3$.
The rationality problem for such singular quartic double solids have never been addressed as far as we are aware.
A priori, one can adapt \cite{Voi88,CPS2015} to prove their irrationality in some cases.
\end{remark}


\medskip

\textbf{Acknowledgements.}
This work was initiated during Affine Algebraic Geometry Meeting, in Osaka, Japan in March 2016, where three authors met. It was carried out during the authors' visits to Universit\'e de Bourgogne, Dijon France in June 2016, November 2017, and the first author's stay at the Max Planck Institute for Mathematics in~2017. This work was finalised during the workshop, The Shokurovs: Workshop for Birationalists, in Pohang, Korea in December 2017.
The third author was supported by IBS-R003-D1, Institute for Basic Science in Korea.

\end{document}